        \def\theequation{\thesection.\arabic{equation}}
\newcommand{\mF}{{\mathcal F}}
\newcommand{\mR}{{\mathcal R}}
\newcommand{\mH}{{\mathcal H}}
\newcommand{\vf}{\varphi}
\newcommand{\al}{\alpha}
\newcommand{\be}{\beta}
\newcommand{\om}{\omega}
\newcommand{\vth}{\vartheta}
\newcommand{\MatM}{ {\rm Mat}(M,\mathbb C) }
\newcommand{\mC}{\mathbb C}
\newcommand{\mZ}{\mathbb Z}
\newcommand{\ka}{\kappa}
\newcommand{\z}{{\zeta}}
\newtheorem{predl}{Proposition}[section]
\newtheorem{lemma}{Lemma}[section]
\newenvironment{proof}{\par\noindent{\bf Proof.}}{\hfill$\scriptstyle\blacksquare$}
\def\beq{\begin{equation}}
\def\eq{\end{equation}}
\def\p{\partial}
\newtheorem{theor}{Theorem}
\newcommand{\mats}[4]{\left(\begin{array}{cc}{#1}&{#2}\\ {#3}&{#4}
\end{array}\right)}
\def\res{\mathop{\hbox{Res}}\limits}
\begin{document}

\setcounter{page}{1}

\begin{center}

\

\vspace{-0mm}




{\Large{\bf Anisotropic spin  generalization of elliptic    }}

\vspace{3mm}

{\Large{\bf Macdonald-Ruijsenaars operators and R-matrix identities }}

 \vspace{15mm}

 {\Large {M. Matushko}}
\qquad\quad\quad
 {\Large {A. Zotov}}


  \vspace{5mm}

{\em Steklov Mathematical Institute of Russian
Academy of Sciences,\\ Gubkina str. 8, 119991, Moscow, Russia}

   \vspace{3mm}

 {\small\rm {e-mails: matushko@mi-ras.ru, zotov@mi-ras.ru}}

\end{center}

\vspace{0mm}

\begin{abstract}
We propose commuting set of matrix-valued difference operators in terms
 of the elliptic Baxter-Belavin $R$-matrix in the fundamental representation of
${\rm GL}_M$. In the scalar case $M=1$ these operators are the elliptic Macdonald-Ruijsenaars operators, while
in the general case they can be viewed as anisotropic versions of the quantum spin Ruijsenaars Hamiltonians.
We show that commutativity of the operators for any $M$
is equivalent to a set of $R$-matrix identities. The proof of identities is based on the properties of elliptic $R$-matrix including the quantum and the associative Yang-Baxter equations. As an application of our results, we introduce elliptic generalization of
q-deformed Haldane-Shastry model.

\end{abstract}

%

{\small{
\tableofcontents
}}

\subsection*{\ \ \ \ \ \ \underline{Main notations:}}

\ \vspace{-7mm}

{\small{

$N$ -- number of complex variables $z_1,...,z_N$ in operators;

$M$ -- rank of ${\rm GL}_M$ $R$-matrix, i.e. the size of the basis matrices (\ref{a971}). $M=1$ is called the scalar case;

$k$ -- integer number used for numeration of identities (and operators);

$\eta$ -- complex variable (constant parameter) entering the shift operators $p_i=\exp(-\eta\p_{z_i})$;

$\hbar$ -- complex variable (constant parameter) entering quantum $R$-matrix definition;

$\Gamma$ -- lattice of periods $\mZ\oplus\mZ\tau$ of elliptic curve;

$\tau$ -- moduli of elliptic curve $\mC/\Gamma$, ${\rm Im}(\tau)>0$;

$\phi(z)$ -- the Kronecker elliptic function $\phi(\hbar,z)$ (\ref{q01})-(\ref{q02});

$p_i$ -- the shift operator (\ref{p_i});

${\bf p}_I$ -- product of shift operators (\ref{pI}) over subset $I\subset\{1,...,N\}$;

$(I,J)$ -- product (\ref{IJ}) of $\phi$-functions;

$D_k$ -- the Macdonald-Ruijsenaars operators (\ref{Dscalar}) or (\ref{D+});

${\mathcal H}$ -- vector space ${\mathcal H}=(\mC^M)^{\otimes N}$;

$R^\hbar_{ij}(z)$ -- $R$-matrix, i.e. an element of ${\rm End}({\mathcal H})$ acting on $i$-th and $j$-th tensor components of ${\mathcal H}$;

$R_{ij}$ and $R^-_{ij}$ -- short notations for $R^\hbar_{ij}(z_i-z_j)$ and $R^\hbar_{ij}(z_i-z_j-\eta)$ respectively;

${\bar R}_{ij}$ --  $R$-matrix normalized as in (\ref{q04})-(\ref{q05});

$P_{ij}$ -- the matrix permutation operator (\ref{P12}) permuting tensor components in $\mH$;

$\overrightarrow{\prod\limits^{N}_ {j=1}} R_{ij}$ and $\overleftarrow{\prod\limits^{N}_ {j=1}} R_{ij}$ -- the arrows mean the ordering
$\overrightarrow{\prod\limits^{N}_ {j=1}} R_{ij}=R_{i1}R_{i2}...R_{iN}$ and $\overleftarrow{\prod\limits^{N}_ {j=1}} R_{ij}=R_{iN}R_{i,N-1}...R_{i1}$;

$\mR_{I,J}$ and $\mR_{I,J}'$ -- the products of $R$-matrices (\ref{RIJ1})
and (\ref{RIJ'1}) respectively;

${\mathcal D}_k$ -- the (anisotropic) spin operators (\ref{q10}) or (\ref{Dspin2}).

}}

\newpage
\section{Introduction}
\setcounter{equation}{0}

The elliptic Macdonald-Ruijsenaars operators were introduced in \cite{Ruij} generalizing the (trigonometric) Macdonald operators \cite{Macd}. These  operators can be viewed as a set of commuting Hamiltonians in quantum many-body system of interacting particles (the elliptic Ruijsenaars-Schneider model). Commutativity of operators can be established using several different approaches including the Dunkl-Cherednik operators \cite{Chered,BFV,KH}, the quantization by dynamical $R$-matrices \cite{FV}, the quantization via non-dynamical $R$-matrix \cite{Has}, the quantization through the quantum Lax pairs \cite{Chalykh} and the group-theoretical approach \cite{ACF,AKO}. The studies of elliptic Macdonald-Ruijsenaars operators and their generalizations attract much attention. Some recent results can be found in \cite{MMZ,VDG}.

In this paper we propose anisotropic version for elliptic spin Macdonald-Ruijsenaars operators and prove their commutativity following the original approach of \cite{Ruij}. Namely, our strategy is to deduce the underlying identities and then prove them.
Compared to \cite{Ruij} (in what follows we refer to it as the scalar case)
the derivation and proof of the identities in our case is much more complicated since the elliptic functions are replaced by ${\rm GL}_M$ $R$-matrices. In this respect we also use the approach of A. Polishchuk \cite{Pol}, where the elliptic Baxter-Belavin $R$-matrix \cite{Baxter,Belavin} was shown to satisfy the associative Yang-Baxter equation \cite{FK}. Then the ${\rm GL}_M$
elliptic $R$-matrix is considered as matrix valued function on elliptic curve, and it turns into a certain function (the Kronecker elliptic function) when $M=1$. The proof of commutativity of spin operators uses not only the quantum Yang-Baxter equation (and unitarity condition) since the latter is identically fulfilled in $M=1$ case, while the identities remain non-trivial in the scalar case.   In this way our results provide non-commutative (matrix) generalization of \cite{Ruij}.

\paragraph{Elliptic Macdonald-Ruijsenaars operators.}
For $i=1,\dots,N$ denote by $p_i$ the shift operator acting on function $f(z_1,\dots ,z_N)$ as follows:
\beq\label{p_i}
(p_if)(z_1,z_2,\dots z_N)=\exp\left(-\eta \frac{\partial}{\partial z_i}\right)f(z_1,\dots,z_N)=f(z_1,\dots,z_i-\eta,\dots, z_N).
\eq
Define the Kronecker elliptic function on elliptic curve $\mC/\Gamma$, $\Gamma=\mZ\oplus\mZ\tau$ with
moduli $\tau$ (${\rm Im}(\tau)>0$)  \cite{Weil}:
\beq\label{q01}
\displaystyle{
    \phi(x, y) =
    \frac{\vartheta'(0) \vartheta (x + y)}{\vartheta (x) \vartheta (y)}\,,
}
\eq
and denote
\beq\label{q02}
\displaystyle{
    \phi(z)=\phi(\hbar,z)\,.
}
\eq
Necessary properties and definitions of elliptic functions\footnote{The term elliptic function not necessarily means double-periodic function in this paper.} are given in the Appendix A.

In \cite{Ruij} S.N.M. Ruijsenaars introduced the following set
of operators\footnote{In fact, in \cite{Ruij} the definition of $\phi(z)$
is different from (\ref{q01}) but it can be transformed to (\ref{q01}) by simple redefinition.}:
\begin{equation}\label{Dscalar}
 D_k=\sum\limits_{\substack{|I|=k}}\prod\limits_{\substack{i\in I \\ j\notin I}}\phi(z_j-z_i)\prod_{i\in I}p_{i},\qquad k=1,\dots,N,
\end{equation}
where the sum is taken over all subsets $I$ of $\{1,\dots,N\}$ of size $k$. In the trigonometric limit  (together with some simple redefinitions) the expressions (\ref{Dscalar}) turn into
the Macdonald operators \cite{Macd}:
\begin{equation}\label{Macd}
 D_k^{Macd}=t^{\frac{k(k-N)}{2}}\sum\limits_{\substack{|I|=k}}\prod\limits_{\substack{i\in I \\ j\notin I}}\frac{tx_i-x_j}{x_i-x_j}\prod_{i\in I}q^{x_i\p_{x_i}},\qquad k=1,\dots,N
\end{equation}
with $t=\exp (-2\pi\imath\hbar)$, $x_k=\exp (2\pi\imath z_k)$ and $q=\exp (-\eta)$.

  It was shown in \cite{Ruij} that $D_k$ pairwise commute iff the function $\phi(z)$ satisfies the set of functional equations:
\begin{equation}\label{IdenRuij}
\sum_{|I|=k}\left(\prod\limits_{\substack{i\in I\\ j\notin I}}\phi(z_j-z_i)\phi(z_i-z_j-\eta)-\prod\limits_{\substack{i\in I\\ j\notin I}}\phi(z_i-z_j)\phi(z_j-z_i-\eta)\right)=0\,,\quad
k=1,\dots,N\,.
\end{equation}
It was proved that the function $\phi(z)$ (\ref{q02}) satisfies (\ref{IdenRuij}).  Moreover, $\phi(z)$ is determined
to be of the form (\ref{q02})
due to (\ref{IdenRuij}). In this paper we consider (\ref{IdenRuij}) not as a functional equation but as a set of identities for the function $\phi(z)$, which is fixed as (\ref{q02}).

\paragraph{$R$-matrices and Yang-Baxter equations.}
Let ${\mathcal H}$ be a vector space ${\mathcal H}=(\mC^M)^{\otimes N}$.
An $R$-matrix
$R^\hbar_{ij}(z)\in {\rm End}({\mathcal H})$ is a linear map acting non-trivially in the $i$-th and $j$-th tensor components of ${\mathcal H}$ only. See (\ref{BB}) and (\ref{BB2}) for elliptic $R$-matrix.

By definition, any quantum $R$-matrix satisfies the quantum Yang-Baxter equation (QYB):
\beq\label{QYB}
\begin{array}{c}
\displaystyle{
    R^{\hbar}_{12}(u)  R^{\hbar}_{13}(u+v) R^{\hbar}_{23}(v) =
      R^{\hbar}_{23}(v) R^{\hbar}_{13}(u+v) R^{\hbar}_{12}(u)\,.
      }
\end{array}\eq
The action of permutation operators (\ref{P12})-(\ref{P12-1}) on (\ref{QYB}) provides the set of relations:
\beq\label{QYB2}
\begin{array}{c}
\displaystyle{
    R^{\hbar}_{ij}(u)  R^{\hbar}_{ik}(u+v) R^{\hbar}_{jk}(v) =
      R^{\hbar}_{jk}(v) R^{\hbar}_{ik}(u+v) R^{\hbar}_{ij}(u)
      }
\end{array}\eq
for any distinct integers $1\leq i,j,k\leq N$. Also,
\beq\label{QYB3}
\begin{array}{c}
\displaystyle{
    [R^{\hbar}_{ij}(u), R^{\hbar'}_{kl}(v)]=0
      }
\end{array}\eq
for any distinct integers $1\leq i,j,k,l\leq N$.

In this paper we deal with the elliptic Baxter-Belavin $R$-matrix \cite{Baxter,Belavin} given by (\ref{BB}).
It satisfies (\ref{QYB}) and  the unitarity property:
\beq\label{q03}\begin{array}{c}
    R^{\hbar}_{ij}(z) R^\hbar_{ji}(-z)= {\rm Id}\, \phi(\hbar,z)\phi(\hbar,-z)\,,
\end{array}\eq
where ${\rm Id}=1_{M^N}$ is the identity matrix in ${\rm End}({\mathcal H})$.
In what follows we also use $R$-matrices ${\bar R}^{\hbar}_{ij}(z)$, which are related to $R^{\hbar}_{ij}(z)$ through
\beq\label{q04}
\begin{array}{c}
    R^{\hbar}_{ij}(z) = \phi(\hbar,z){\bar R}^{\hbar}_{ij}(z)\,.
\end{array}\eq
Then
\beq\label{q05}
\begin{array}{c}
    {\bar R}^{\hbar}_{ij}(z) {\bar R}^\hbar_{ji}(-z)= {\rm Id}\,.
\end{array}\eq

It was shown in \cite{Pol} that the elliptic $R$-matrix satisfies not only
the QYB (\ref{QYB}) but also the associative Yang-Baxter equation (AYBE) \cite{FK}:
\beq\label{AYBE}
\begin{array}{c}
    R^{x}_{12} R^{y}_{23} = R^{y}_{13} R^{x-y}_{12} + R^{y-x}_{23} R^{x}_{13}, \quad R^\hbar_{ab} = R^\hbar_{ab}(z_a-z_b)\,.
\end{array}\eq

Consider the scalar case $M=1$. Then an $R$-matrix becomes a function. The elliptic $R$-matrix turns into the Kronecker
function (\ref{q02}). While the QYB (\ref{QYB}) becomes trivial (it is valid for any function), the AYBE
is a non-trivial equation given by the genus one Fay identity (\ref{Fay}). In this respect the elliptic $R$-matrix can be viewed as a matrix generalization of the Kronecker elliptic function. Being considered as a function of $z$ with simple pole at $z=0$, the Kronecker function is fixed by residue (\ref{a095}) and the quasi-periodic properties (\ref{a096}) on the lattice of periods $\Gamma=\mZ\oplus\mZ\tau$ of elliptic curve. Similarly, the elliptic $R$-matrix
(\ref{BB}) considered as a matrix valued function of $z$ with simple pole at $z=0$ is fixed by (\ref{r05}) and (\ref{r721}).

\paragraph{Purpose of the paper} is to construct commutative set of spin type generalizations of (\ref{Dscalar}) using
the elliptic ${\rm GL}_M$ $R$-matrix (\ref{BB}), which
satisfies equations (\ref{QYB})-(\ref{AYBE}).

For any $M$ define the set of spin Macdonald-Ruijsenaars operators as
\beq\label{q10}
\begin{array}{c}
  \displaystyle{
    {\mathcal D}_k=\sum\limits_{1\leq i_1<...<i_k\leq N}\left(\!\prod\limits^{N}_{\hbox{\tiny{$ \begin{array}{c}{ j=1 }\\{ j\!\neq\! i_1...i_{k-1} } \end{array}$}}}\!\phi(z_j-z_{i_1})\ \phi(z_j-z_{i_2})
    \ \cdots\
    \phi(z_j-z_{i_k})\right)\times
    }
    \\ \ \\
    \displaystyle{
    \times\left(
   \overleftarrow{\prod\limits_{j_1=1}^{i_1-1}} \bar{R}_{j_1 i_1}
   \overleftarrow{\prod\limits^{i_2-1}_{\hbox{\tiny{$ \begin{array}{c}{ j_2=1 }\\{ j_2\!\neq\! i_1 } \end{array}$}}}} \bar{R}_{j_2 i_2}
      \ \ldots\
 \overleftarrow{\prod\limits^{i_k-1}_{\hbox{\tiny{$ \begin{array}{c}{ j_k=1 }\\{ j_k\!\neq\! i_1...i_{k-1} } \end{array}$}}}} \bar{R}_{j_k i_k}
  \right)\times
   }
   \\ \ \\
     \displaystyle{
       \times p_{i_1}\cdot p_{i_2}\cdots p_{i_k}\times\left(
   \overrightarrow{\prod\limits^{i_k-1}_{\hbox{\tiny{$ \begin{array}{c}{ j_k\!=\!1 }\\{ j_k\!\neq\! i_{1}...i_{k-1}} \end{array}$}}}}\bar{R}_{i_k j_k}
   \overrightarrow{\prod\limits^{i_{k-1}-1}_{\hbox{\tiny{$ \begin{array}{c}{ j_{k-1}\!=\!1 }\\{ j_{k-1}\!\neq\! i_{1}...i_{k-2}} \end{array}$}}}}\bar{R}_{i_{k-1} j_{k-1}}
      \ \ldots\
  \overrightarrow{\prod\limits^{i_{1}-1}_ {j_1=1}} \bar{R}_{i_{1} j_{1}}\right),
 }
\end{array}\eq
where $k=1,...,N$ and $\bar{R}_{ij}=\bar{R}_{ij}^\hbar(z_i-z_j)$. In the scalar case (\ref{q10}) coincides with (\ref{Dscalar}).
\begin{theor}\label{th1}
 The operators ${\mathcal D}_k$ (\ref{q10}) commute with each other
 \beq\label{a205}
  \begin{array}{c}
  \displaystyle{
   [{\mathcal D}_k,{\mathcal D}_l]=0\quad k,l=1,...,N
 }
 \end{array}
 \eq
 iff the following set of identities holds true:
 \beq\label{a20}
  \begin{array}{c}
  \displaystyle{
   \sum\limits_{1\leq i_1<...<i_k\leq N}
    \Big({\mathcal F}^-_{i_1,...,i_k}(k,N)-{\mathcal F}^+_{i_1,...,i_k}(k,N)\Big)=0\,,
 }
 \end{array}
 \eq
where
 \beq\label{a21}
  \begin{array}{c}
  \displaystyle{
   {\mathcal F}^+_{i_1,...,i_k}(k,N)=
   \overrightarrow{\prod\limits_{l_k=i_k+1}^N} R_{i_k l_k}
   \overrightarrow{\prod\limits^N_{\hbox{\tiny{$ \begin{array}{c}{ l_{k-1}\!=\!i_{k-1}\!+\!1 }\\{ l_{k-1}\!\neq\! i_k } \end{array}$}}}}R_{i_{k-1} l_{k-1}}
      \ \ldots\
   \overrightarrow{\prod\limits^N_{\hbox{\tiny{$ \begin{array}{c}{ l_{1}\!=\!i_{1}\!+\!1 }\\{ l_{1}\!\neq\! i_2...i_k } \end{array}$}}}}R_{i_{1} l_{1}}
   \times
   }
   \end{array}
 \eq
   $$
   \begin{array}{c}
     \displaystyle{
 \times
 \overleftarrow{\prod\limits^N_{\hbox{\tiny{$ \begin{array}{c}{ j_{1}\!=\!1 }\\{ j_{1}\!\neq\! i_1...i_k } \end{array}$}}}}R_{j_1i_1}^-
 \overleftarrow{\prod\limits^N_{\hbox{\tiny{$ \begin{array}{c}{ j_{2}\!=\!1 }\\{ j_{2}\!\neq\! i_1...i_k } \end{array}$}}}}R_{j_2i_2}^-
 \ \ldots\
  \overleftarrow{\prod\limits^N_{\hbox{\tiny{$ \begin{array}{c}{ j_{k}\!=\!1 }\\{ j_{k}\!\neq\! i_1...i_k } \end{array}$}}}}R_{j_ki_k}^-
  \times
    }
   \\ \ \\
     \displaystyle{
       \times
   \overrightarrow{\prod\limits^{i_k-1}_{\hbox{\tiny{$ \begin{array}{c}{ m_k\!=\!1 }\\{ m_k\!\neq\! i_{1}...i_{k-1}} \end{array}$}}}}R_{i_k m_k}
   \overrightarrow{\prod\limits^{i_{k-1}-1}_{\hbox{\tiny{$ \begin{array}{c}{ m_{k-1}\!=\!1 }\\{ m_{k-1}\!\neq\! i_{1}...i_{k-2}} \end{array}$}}}}R_{i_{k-1} m_{k-1}}
      \ \ldots\
  \overrightarrow{\prod\limits^{i_{1}-1}_ {m_1=1}} R_{i_{1} m_{1}}
 }
 \end{array}
 $$
and
 \beq\label{a22}
  \begin{array}{c}
  \displaystyle{
   {\mathcal F}^-_{i_1,...,i_k}(k,N)=
   \overleftarrow{\prod\limits_{m_1=1}^{i_1-1}} R_{m_1 i_1}
   \overleftarrow{\prod\limits^{i_2-1}_{\hbox{\tiny{$ \begin{array}{c}{ m_2=1 }\\{ m_2\!\neq\! i_1 } \end{array}$}}}} R_{m_2 i_2}
      \ \ldots\
 \overleftarrow{\prod\limits^{i_k-1}_{\hbox{\tiny{$ \begin{array}{c}{ m_k=1 }\\{ m_k\!\neq\! i_1...i_{k-1} } \end{array}$}}}} R_{m_k i_k}
   \times
   }
   \end{array}
 \eq
   $$
   \begin{array}{c}
     \displaystyle{
 \times
 \overrightarrow{\prod\limits^N_{\hbox{\tiny{$ \begin{array}{c}{ j_{k}\!=\!1 }\\{ j_{k}\!\neq\! i_1...i_k } \end{array}$}}}}R_{i_k j_k}^-
 \overrightarrow{\prod\limits^N_{\hbox{\tiny{$ \begin{array}{c}{ j_{k-1}\!=\!1 }\\{ j_{k-1}\!\neq\! i_1...i_k } \end{array}$}}}}R_{i_{k-1} j_{k-1}}^-
 \ \ldots\
  \overrightarrow{\prod\limits^N_{\hbox{\tiny{$ \begin{array}{c}{ j_{1}\!=\!1 }\\{ j_{1}\!\neq\! i_1...i_k } \end{array}$}}}}R_{i_1 j_1}^-
  \times
    }
   \\ \ \\
     \displaystyle{
       \times
   \overleftarrow{\prod\limits^{N}_{\hbox{\tiny{$ \begin{array}{c}{ l_1\!=\!i_1\!+\!1 }\\{ l_1\!\neq\! i_{2}...i_{k}} \end{array}$}}}}R_{l_1 i_1}
    \overleftarrow{\prod\limits^{N}_{\hbox{\tiny{$ \begin{array}{c}{ l_2\!=\!i_2\!+\!1 }\\{ l_2\!\neq\! i_{3}...i_{k}} \end{array}$}}}}R_{l_2 i_2}
      \ \ldots\
  \overleftarrow{\prod\limits^{N}_ {l_k=i_k+1}} R_{l_k i_k}\,,
 }
 \end{array}
 $$
and $R_{ij}=R_{ij}^\hbar(z_i-z_j)$, $R_{ij}^-=R_{ij}^\hbar(z_i-z_j-\eta)$. In the scalar case $M=1$ identities (\ref{a20})
coincide with (\ref{IdenRuij}).
\end{theor}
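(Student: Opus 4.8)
The plan is to follow the strategy of Ruijsenaars' scalar computation, adapted to the non-commutative setting. I would first write each operator schematically as $\mathcal{D}_k=\sum_{|I|=k}\Phi_I\, L_I\,\big(\prod_{i\in I}p_i\big)\,\Pi_I$, where $\Phi_I$ is the scalar product of Kronecker functions and $L_I$, $\Pi_I$ are the left- and right-ordered products of $\bar R$-matrices appearing in (\ref{q10}). The central computation is the composition $\mathcal{D}_k\mathcal{D}_l$: moving all shift operators to the right, each $p_i$ ($z_i\mapsto z_i-\eta$) passing through a factor $\bar R_{ab}(z_a-z_b)$ or $\phi(z)$ replaces it by its $\eta$-shifted version. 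After restoring the normalization $R=\phi\,\bar R$ and $R^-=\phi(\cdot-\eta)\,\bar R^-$, this is precisely the mechanism that converts the plain $R$'s of one factor into the $R^-$'s appearing in (\ref{a21})--(\ref{a22}). The commutator $[\mathcal{D}_k,\mathcal{D}_l]$ then becomes a difference operator, and the statement reduces to controlling its coefficients sector by sector, a sector being fixed by the monomial $\prod_i p_i^{m_i}$, $m_i\in\{0,1,2\}$, in the shift operators.

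For the direction commutativity $\Rightarrow$ identities I would isolate the sector carrying the maximal shift $p_1p_2\cdots p_N$. This monomial can arise only from $[\mathcal{D}_k,\mathcal{D}_{N-k}]$ with complementary index sets, and collecting the two orderings of the product yields exactly $\sum_{|I|=k}(\mathcal{F}^-_{I}-\mathcal{F}^+_{I})$; hence $[\mathcal{D}_k,\mathcal{D}_{N-k}]=0$ forces (\ref{a20}), and in the limit $\bar R\to{\rm Id}$ one checks that $\mathcal{F}^\mp$ collapse to the two products of (\ref{IdenRuij}). The converse, (\ref{a20}) $\Rightarrow$ commutativity, is the substantive part: assuming the identities for all $k$ I must show that every coefficient, in every sector of every $[\mathcal{D}_k,\mathcal{D}_l]$, vanishes. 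Here I would argue that the dressing $\bar R$-factors carried by the unshifted (spectator) indices cancel in conjugate pairs by the unitarity relation (\ref{q05}), $\bar R_{ij}\bar R_{ji}={\rm Id}$, so that each sector collapses to an instance of $\sum(\mathcal{F}^--\mathcal{F}^+)=0$ on the alphabet of the actually shifted indices, and therefore to (\ref{a20}).

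I expect the main obstacle to be not the combinatorics of the shift sectors, which is essentially Ruijsenaars' bookkeeping, but the ordering of the $R$-matrices. Once the shifts have been pushed through, the right dressing $\Pi_I$ of one factor collides with the scalar factor and the left dressing $L_J$ of the other, and the resulting strings of $R$ and $R^-$ do not automatically sit in the arrow-ordered form prescribed in (\ref{a21})--(\ref{a22}). Bringing them to this canonical form, and matching the two orderings so that they differ precisely by the exchange $\mathcal{F}^+\leftrightarrow\mathcal{F}^-$, requires systematic use of the braid relation (\ref{QYB2}) together with unitarity; this is the step for which the scalar case gives no guidance, since there all factors commute, and where the specific orderings chosen in the definition (\ref{q10}) enter essentially. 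The bulk of the labour will be to verify that these rearrangements close consistently for all $k,l$ and leave no uncancelled boundary $R$-matrices.
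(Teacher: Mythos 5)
Your necessity argument is fine: the coefficient of $p_1\cdots p_N$ in $[\mathcal{D}_k,\mathcal{D}_{N-k}]$ indeed receives contributions only from complementary pairs $J=I^c$, since the total shift $\mathbf{p}_{\{1,\dots,N\}}$ is central the two orderings line up, and the resulting expression is exactly the identity in its normalized form (\ref{relRIun}), i.e.\ (\ref{a20}) after using (\ref{q04}). The gap is in the converse, which is the substantive half of the theorem. In a sector of $[\mathcal{D}_k,\mathcal{D}_l]$ (both shifts positive) with $C=I\cap J\neq\emptyset$ and $D=(I\cup J)^c\neq\emptyset$, pushing the shift operators to the right produces $R$-matrices whose arguments are shifted by $2\eta$. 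Concretely, for $N=4$ the sector $p_2p_3^2$ of $[\mathcal{D}_1,\mathcal{D}_2]$ gets one term from $(I,J)=(\{3\},\{2,3\})$ and one from $(\{2,3\},\{3\})$, and both contain the factor $\bar R_{31}(z_3-z_1-2\eta)$ coming from $\mathbf{p}_I\mathbf{p}_J\bigl[\bar{\mathcal{R}}'_{J,J^c}\bigr]$. Such doubly shifted factors never occur in (\ref{a21})--(\ref{a22}), and no factor with argument $z_1-z_3+2\eta$ appears anywhere in the sector, so they have no unitarity partner: your proposed mechanism --- conjugate-pair cancellation by (\ref{q05}) collapsing each sector to an instance of (\ref{a20}) on the shifted alphabet --- cannot work as stated. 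To close the direct route you would need a factorization lemma pulling all $C$- and $D$-dressings, including the $2\eta$-shifted ones, out of the sum over splittings $A\sqcup B=I\triangle J$ as common left and right factors; that is a substantial Yang--Baxter computation which your sketch neither formulates nor reduces to unitarity.

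This is precisely the difficulty the paper's proof is built to avoid. Following Ruijsenaars, it introduces the negative-shift operators $\mathcal{D}_{-l}$ of (\ref{q308}), observes that $\mathcal{D}_{-N}=\mathbf{p}_{\{1,\dots,N\}}^{-1}$ is central and $\mathcal{D}_{-l}=\mathcal{D}_{N-l}\mathcal{D}_{-N}$, so that commutativity of the family $\{\mathcal{D}_k\}$ is equivalent to $[\mathcal{D}_k,\mathcal{D}_{-l}]=0$ for all $k,l$. In these mixed commutators the shifts on $C$ cancel ($\mathbf{p}_C\cdots\mathbf{p}_C^{-1}$), every $R$-matrix argument stays in the single-shift range, and the key Lemma \ref{lemma1} (proved from Lemma \ref{lemXYZ} and unitarity) rewrites each term as a prefactor and postfactor depending only on $(A,B)$ times a middle block depending on the splitting $C\sqcup D=(A\cup B)^c$; the sum of middle blocks is then the identity (\ref{a20}) on the sub-alphabet $(A\cup B)^c$. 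Note also that this makes visible what Theorem \ref{th3} states explicitly: the identities are needed for all sizes $m\le N$, not only at level $N$, so even your sufficiency scheme would have to invoke the lower-level identities, which your maximal-sector argument does not produce.
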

Let us notice that the operators (\ref{q10}) are written in terms of $R$-matrices $\bar{R}_{ij}$ (\ref{q04})-(\ref{q05}), while the $R$-matrices in the identities (\ref{a20}) are those normalized as in (\ref{q03}).

Next, we prove
\begin{theor}\label{th2}
  The elliptic $R$-matrix (\ref{BB}) satisfies the identities (\ref{a20}).
\end{theor}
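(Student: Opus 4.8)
The plan is to prove Theorem \ref{th2} by induction on $k$, following the strategy of \cite{Ruij} for the scalar case but upgrading each functional-equation manipulation to an $R$-matrix manipulation via the AYBE (\ref{AYBE}) and the unitarity relations (\ref{q03})-(\ref{q05}). First I would establish the base case $k=1$ as a standalone identity, which should reduce to a telescoping statement about a single index $i_1$ summed over $1,\dots,N$; here the only tools needed are AYBE to split and recombine products of $R$-matrices sharing the spectator index and the unitarity property to cancel an $R^\hbar_{ij}(z)R^\hbar_{ji}(-z)$ pair down to the scalar factor $\phi(\hbar,z)\phi(\hbar,-z)$. The essential point of the base case is that the difference $\mathcal{F}^-_{i_1}(1,N)-\mathcal{F}^+_{i_1}(1,N)$, after summation over $i_1$, collapses because adjacent terms in the telescope are related by a single application of AYBE.

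Next I would isolate the genuinely new structural input. Because the products in (\ref{a21}) and (\ref{a22}) are all built from $R$-matrices that share a common \emph{primed} index $i_r$ (either as the first or second slot), the key lemma to prove separately is a \textbf{commutation/rearrangement lemma}: a string of the form $\overrightarrow{\prod}_j R_{ij}(z_i-z_j)$ can be moved past a neighboring string, or have its ordering reversed at the cost of a controlled product, using only (\ref{QYB2}), (\ref{QYB3}), and (\ref{AYBE}). I expect the cleanest route is to define the composite objects $\mR_{I,J}$ and $\mR'_{I,J}$ referenced in the Main notations and to first prove the two auxiliary identities these satisfy (an ``AYBE in product form'' and a ``unitarity in product form''), so that the bulk of the theorem becomes an algebraic consequence of those two, applied slot by slot.

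The inductive step would then proceed by peeling off the largest index $i_k$ (or alternatively $i_1$) from each product in $\mathcal{F}^\pm_{i_1,\dots,i_k}(k,N)$ and grouping the sum $\sum_{i_1<\dots<i_k}$ according to the value of that distinguished index, writing the size-$k$ sum in terms of size-$(k-1)$ sums over the remaining indices. The AYBE is used to detach the $i_k$-strings from the rest, the induction hypothesis (\ref{a20}) at level $k-1$ is invoked to annihilate the leftover size-$(k-1)$ sum of differences $\mathcal{F}^- - \mathcal{F}^+$, and unitarity is used to cancel the $R$-matrix factors carrying the shift $-\eta$ against their unshifted partners where indices drop out of the summation range. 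Particular care is needed to verify that the index ranges and the $j\neq i_1\dots i_k$ exclusions match up after re-indexing, since a single misaligned product destroys the telescoping.

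The main obstacle, I expect, is bookkeeping of the orderings: unlike the scalar case where all factors commute and the identity (\ref{IdenRuij}) is symmetric, here the arrows $\overrightarrow{\prod}$ and $\overleftarrow{\prod}$ are essential and the four blocks in each of $\mathcal{F}^+$ and $\mathcal{F}^-$ must be threaded together in exactly the order that lets AYBE fire and unitarity cancel. The delicate part is showing that the ``crossing'' terms produced by each AYBE application — the second summand on the right of (\ref{AYBE}), which has no scalar analogue — reorganize into precisely the opposite-chirality product appearing in $\mathcal{F}^\mp$, so that the $+$ and $-$ families are genuinely interchanged by the manipulation rather than merely transformed into something new. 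Verifying this interchange, together with the correct matching of the $\phi$-prefactors in (\ref{q10}) against the scalar factors released by unitarity, is where the argument will require the most explicit (though still routine) computation.
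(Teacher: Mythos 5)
Your base case $k=1$ is essentially aligned with the paper: the authors prove it from the higher-order analogue (\ref{a4}) of the associative Yang-Baxter equation, expanding the two ``middle'' products of shifted $R$-matrices via (\ref{a5})--(\ref{a6}) and then cancelling the resulting terms pairwise (each pair sharing a factor $R^{Y}_{ab}(z_a-z_b-\eta)$) with repeated use of the quantum Yang--Baxter equation. That part of your plan, while vague about the pairing mechanism, points at the right tools.

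The inductive step, however, is a genuine gap, and it is exactly the step where the paper abandons algebra for complex analysis. The paper's induction on $k$ is \emph{not} a peeling/regrouping of the sums: it treats ${\mathcal F}={\mathcal F}^--{\mathcal F}^+$ as a function of $\eta$, computes the residues at its only possible poles $z_a=z_b+\eta-\Omega_{m_1,m_2}$ (Lemma \ref{lprop1}, Propositions \ref{prop1} and \ref{prop2}), and shows each residue factors as
\begin{equation*}
\res\limits_{z_a=z_b+\eta}{\mathcal F}(k,N)={\mathcal A}(a,b)\cdot {\mathcal F}(k-1,N-2)\cdot {\mathcal B}(a,b)\,,
\end{equation*}
which vanishes by the induction hypothesis; note the reduction is $(k,N)\to(k-1,N-2)$, with \emph{two} variables $z_a,z_b$ removed by the residue, not merely $k\to k-1$ as in your re-indexing scheme. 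Then quasi-periodicity of ${\mathcal F}$ in $\eta$ on the enlarged torus with periods $M,M\tau$ (\ref{r726}), combined with the Liouville-type fact (\ref{r727}), forces ${\mathcal F}$ to be constant in $\eta$; finally at $\eta=0$ unitarity (\ref{q03}) collapses every $R$-matrix product to a scalar multiple of the identity and the statement reduces to the trivial $\eta=0$ specialization of (\ref{IdenRuij}). Your proposed alternative---detaching the $i_k$-strings by AYBE, invoking the level-$(k-1)$ identity, and cancelling shifted against unshifted factors by unitarity---has no demonstrated mechanism by which the exclusion patterns $j\neq i_1\ldots i_k$ and the $-\eta$-shifted arguments telescope; indeed the paper explicitly states (Section \ref{sect5}) that a proof of the $k>1$ identities using only the axioms (\ref{QYB}), (\ref{AYBE}), (\ref{q03}) remains a \emph{conjecture}, proved only for $k=1$. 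In other words, the purely algebraic induction you outline is precisely what the authors could not do; their actual proof uses the specific analytic data of the elliptic $R$-matrix (residues (\ref{r05}), quasi-periods (\ref{r721}), the $\mZ_M$ symmetry), without which the argument does not close.
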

For the identity with $k=1$ a more general statement is proved. Namely, for $k=1$ the identity is proved for any $R$-matrix satisfying the quantum Yang-Baxter equation (\ref{QYB}), the associative Yang-Baxter equation (\ref{AYBE}) and the unitarity property (\ref{q03}). These conditions are fulfilled for not only elliptic $R$-matrix but also for a wide class of its trigonometric and rational degenerations \cite{Chered2,AHZ,Pol2,T,Sm,KZ19}. The identities for $k>1$ are proved in a different way. The proof is by induction in $k$. It is shown that vanishing of residues at poles in the $k$-th identity is equivalent to
($k-1$)-th identity. In this respect it is similar to the original proof from \cite{Ruij}, but technically it is much more complicated in the $R$-matrix case.

The proof of Theorem \ref{th2} is not reduced to commutativity of
qKZ-type difference operators (or qKZ connections) \cite{FrR} since such operators commute
due to the quantum Yang-Baxter equation (\ref{QYB}) only. The latter equation becomes trivial in the scalar case but not the identities, which turn in the scalar case to the original form deduced in \cite{Ruij}.

\paragraph{Remark.}  In \cite{Ruij} the operators (\ref{Dscalar}) were introduced in a slightly different form (with square roots), which is related to (\ref{Dscalar}) by conjugating all the operators with a certain function (see e.g. \cite{KH}). The operators (\ref{q10}) can be also transformed to the Ruijsenaars' like form:
$$
  \displaystyle{
    {\mathcal D}'_k=\sum\limits_{1\leq i_1<...<i_k\leq N}\left(\!\prod\limits^{N}_{\hbox{\tiny{$ \begin{array}{c}{ j=1 }\\{ j\!\neq\! i_1...i_{k-1} } \end{array}$}}}\!\phi(z_j-z_{i_1})\cdots\phi(z_j-z_{i_k})\right)^{\frac{1}{2}}\left(
   \overleftarrow{\prod\limits_{j_1=1}^{i_1-1}} \bar{R}_{j_1 i_1}
    \ldots\
 \overleftarrow{\prod\limits^{i_k-1}_{\hbox{\tiny{$ \begin{array}{c}{ j_k=1 }\\{ j_k\!\neq\! i_1...i_{k-1} } \end{array}$}}}} \bar{R}_{j_k i_k}
  \right)\times
   }
  $$
  \beq\label{q20}
     \displaystyle{
       \times p_{i_1}\cdots p_{i_k}\times\left(
   \overrightarrow{\prod\limits^{i_k-1}_{\hbox{\tiny{$ \begin{array}{c}{ j_k\!=\!1 }\\{ j_k\!\neq\! i_{1}...i_{k-1}} \end{array}$}}}}\bar{R}_{i_k j_k}
   \ldots\
  \overrightarrow{\prod\limits^{i_{1}-1}_ {j_1=1}} \bar{R}_{i_{1} j_{1}}\right)
 \left(\!\prod\limits^{N}_{\hbox{\tiny{$ \begin{array}{c}{ j=1 }\\{ j\!\neq\! i_1...i_{k-1} } \end{array}$}}}\!\phi(z_{i_1}-z_j)\cdots\phi(z_{i_k}-z_{j})\right)^{\frac{1}{2}}\,.
 }
\eq
All the above statements are valid for these operators as well.

\paragraph{The paper is organized as follows.} In Section \ref{sect2}
we introduce convenient notations for $R$-matrix products. Then the quantum Yang-Baxter equation (\ref{QYB2}) together with the property (\ref{QYB3}) are transformed into the statements (\ref{l21}) and (\ref{l22}) of Lemma \ref{lemXYZ}. Next, following ideas from \cite{Uglov} and \cite{LPS} we derive a set of the matrix valued difference operators ${\mathcal D}_k$ (\ref{q10}). Examples for $N=2,3,4$
are given in detail in the subsection \ref{sect23}. In Section \ref{sect3} we begin with recalling the derivation of identities in the scalar case. It is a slightly modified version of the original one \cite{Ruij}, which is adopted for $R$-matrix generalization. The $R$-matrix identities are deduced in Theorem \ref{th3}, which is a reformulation of the Theorem \ref{th1}. It is important to mention that
all the functions $\phi$ (entering the definition of ${\mathcal D}_k$) can be
included into $R$-matrix normalization (\ref{q04}) in the identities (\ref{a20})-(\ref{a22}).
A few explicit examples are given in the end of the Section. Section \ref{sect4} is devoted to the proof of Theorem \ref{th2}, i.e. to the proof of $R$-matrix identities. We start with identities for $k=1$ and prove them using the higher order  analogue (\ref{a4}) for the associative Yang-Baxter equation (\ref{AYBE}) likewise
the elliptic function identity (\ref{x1}) is a higher order analogue (in $\phi$-functions) of (\ref{Fay}). Then we proceed to the case $k>1$. Here
the proof is by induction in $k$. Main idea is based on the analysis of poles and quasi-periodic properties. In Section \ref{sect5} we briefly discuss some limiting cases including the limit to differential operators, trigonometric and classical limits. As a by-product of our results we also propose elliptic generalization for q-deformed Haldane spin chain. In the Appendix A some basic notations and properties of elliptic functions are given including the definition of ${\rm GL}_M$ elliptic $R$-matrix (\ref{BB}). Appendices B, C and D contain proofs of technical statements underlying derivation and proof of the $R$-matrix identities.

\section{Construction of spin operators}\label{sect2}
\setcounter{equation}{0}
In \cite{Ruij} the following notations were used.
Let $I,J$ be disjoint subsets of $\{1,\dots,N\}$. Let
\beq\label{IJ}
(I,J)=\prod\limits_{\substack{i\in I\\ j\in J}}\phi(z_{i}-z_j)
\eq
and
\beq\label{pI}
\mathbf{p}_I=\prod_{i\in I}p_{i}\,,
\eq
where $p_i$ was defined in (\ref{p_i}).
Then the operators $D_k$ (\ref{Dscalar}) take the following form:
\beq\label{D+}
D_k=\sum\limits_{I:\substack{|I|=k}}(I^c,I)\,\mathbf{p}_{I},\qquad k=1,\dots,N\,,
\eq
where $I^c$ means the complement of a set $I$ in $\{1,\dots,N\}$, and $|I|$ is the number of elements in $I$.

These notations are used for derivation and proof of the identities underlying commutativity of $D_k$. We
discuss it in the next section. Below we describe the $R$-matrix products notations and formulate a set of properties, which will be very helpful in derivation and proof of $R$-matrix identities.

%
\subsection{Convenient notations for R-matrix products}\label{sect21}
For any pair $I,J$ of disjoint subsets in $\{1,\dots,N\}$ define the product
\beq\label{RIJsh1}
\mathcal{R}_{I,J}=\prod_{i\in I, j\in J, i<j} R_{ij}\left({z_i}-{z_j}\right)\,,
\eq
where the ordering of $R$-matrices is as follows:
\begin{equation}\label{RIJ1}
\mathcal{R}_{I,J}=\overleftarrow{\prod\limits_{\substack{i_1\in I\\i_1<j_1}}} R_{i_1,j_1}\left({z_{i_1}}-{z_{j_1}}\right) \overleftarrow{\prod\limits_{\substack{i_2\in I\\i_2<j_2}}} R_{i_2,j_2}\left({z_{i_2}}-{z_{j_2}}\right)\dots\overleftarrow{\prod\limits_{\substack{i_k\in I\\i_k<j_k}}}R_{i_k,j_k}\left({z_{i_k}}-{z_{j_k}}\right)\,.
\end{equation}
Here $J=\{j_1,j_2,\dots,j_k\}$ and the elements $j_m$ are in increasing order $j_1<j_2<\dots<j_k$.
Let also $I=\{i_1,i_2,\dots,i_l\}$ and $i_1<i_2<\dots<i_l$. In what follows we assume that the above ordering of
indices in $I$ and $J$ is fixed. By moving
$R$-matrices with the help of (\ref{QYB3}) the definition (\ref{RIJ1}) is equivalently rewritten as
\begin{equation}\label{RIJ2}
\mathcal{R}_{I,J}=\overrightarrow{\prod\limits_{\substack{j_l\in J\\j_l>i_l}}}R_{i_l,j_l}\left({z_{i_l}}-{z_{j_l}}\right) \overrightarrow{\prod\limits_{\substack{j_{l-1}\in J\\ j_{l-1}>i_{l-1}}}} R_{i_{l-1},j_{l-1}}\left({z_{i_{l-1}}}-{z_{j_{l-1}}}\right)\dots\overrightarrow{\prod\limits_{\substack{j_1\in J\\j_1>i_1}}} R_{i_1,j_1}\left({z_{i_1}}-{z_{j_1}}\right)\,.
\end{equation}
 For example, consider $I=\{a_1,a_2\}$, $J=\{b_1,b_2,b_3\}$ and $1\leq a_1<a_2<b_1<b_2<b_3\leq N$. Then (\ref{RIJ1})
 reads as $\mathcal{R}_{I,J}=R_{a_2b_1}R_{a_1b_1}\cdot R_{a_2b_2}R_{a_1b_2}\cdot R_{a_2b_3}R_{a_1b_3}$. The equivalent expression (\ref{RIJ2}) takes the following form:
  $\mathcal{R}_{I,J}=R_{a_2b_1}R_{a_2b_2}R_{a_2b_3}\cdot R_{a_1b_1}R_{a_1b_2}R_{a_1b_3}$.
 By definition we assume $\mathcal{R}_{I,J}={\rm Id}$ if there are no pairs of indices satisfying $i<j$ in $I$ and $J$.
 In the above example $\mathcal{R}_{I,J}={\rm Id}$ for $J=\{a_1,a_2\}$ and $I=\{b_1,b_2,b_3\}$.

 Similarly, define the product
\beq\label{RIJsh2}
\mathcal{R}_{I,J}'=\prod_{i\in I, j\in J, i>j} R_{ij}\left({z_i}-{z_j}\right)\,,
\eq
with the following ordering:
\begin{equation}\label{RIJ'1}
\mathcal{R}_{I,J}'=\overrightarrow{\prod\limits_{\substack{j_l\in J\\j_l<i_l}}}R_{i_l,j_l}(z_{i_l}-z_{j_l})\overrightarrow{\prod\limits_{\substack{j_{l-1}\in J\\j_{l-1}<i_{l-1}}}} R_{i_{l-1},j_{l-1}}(z_{i_{l-1}}-z_{j_{l-1}})\dots \overrightarrow{\prod\limits_{\substack{j_1\in J\\j_1<i_1}}} R_{i_1,j_1}(z_{i_1}-z_{j_1})\,.
\end{equation}
Again, we may rewrite it equivalently as
\begin{equation}\label{RIJ'2}
\mathcal{R}_{I,J}'=\overleftarrow{\prod\limits_{\substack{i_1\in I\\i_1>j_1}}} R_{i_1,j_1}\left({z_{i_1}}-{z_{j_1}}\right) \overleftarrow{\prod\limits_{\substack{i_2\in I\\i_2>j_2}}} R_{i_2,j_2}\left({z_{i_2}}-{z_{j_2}}\right)\dots\overleftarrow{\prod\limits_{\substack{i_k\in I\\i_k>j_k}}} R_{i_k,j_k}\left({z_{i_k}}-{z_{j_k}}\right)\,.
\end{equation}
The product of $\mathcal{R}_{I,J}'$ and $\mathcal{R}_{I,J}$ provides $R$-matrix analogue for the notation $(I,J)$ (\ref{IJ}) used in the scalar case.
It can be easily verified that
\begin{equation}\label{q210}
 \mathcal{R}_{I,J}'\mathcal{R}_{I,J}=\overleftarrow{\prod_{i_1\in I}} R_{i_1,j_1}\left({z_{i_1}}-{z_{j_1}}\right) \overleftarrow{\prod_{i_2\in I}} R_{i_2,j_2}\left({z_{i_2}}-{z_{j_2}}\right)\dots\overleftarrow{\prod_{i_k\in I}}R_{i_k,j_k}\left({z_{i_k}}-{z_{j_k}}\right)
\end{equation}
and
\begin{equation}\label{q211}
\mathcal{R}_{I,J}'\mathcal{R}_{I,J}=\overrightarrow{\prod_{j_l\in J}} R_{i_l,j_l}(z_{i_l}-z_{j_l})\overrightarrow{\prod_{j_{l-1}\in J}} R_{i_{l-1},j_{l-1}}(z_{i_{l-1}}-z_{j_{l-1}})\dots \overrightarrow{\prod_{j_1\in J}} R_{i_1,j_1}(z_{i_1}-z_{j_1})\,.
\end{equation}

Up till now we did not use the Yang-Baxter equation (\ref{QYB2}). Let us formulate it for the products (\ref{RIJ1}) and
(\ref{RIJ'1}) since it plays a key role in deriving and proving $R$-matrix identities.
\begin{lemma}\label{lemXYZ}
 Let $R(u)$ be an $R$-matrix satisfying the quantum Yang-Baxter equation (\ref{QYB}) and $\mathcal{R}_{A,B}$ and $\mathcal{R}'_{A,B}$ are the corresponding products of $R$-matrices defined by  (\ref{RIJ1}) and (\ref{RIJ'1}) respectively. For any disjoint subsets $A,B,C$ of $\{1,2,\dots,N\}$ the following identities hold true:
\begin{equation}\label{l21}
 \mathcal{R}_{C,A\cup B}\mathcal{R}_{B,A}=\mathcal{R}_{B\cup C,A}\mathcal{R}_{C,B}\,,
\end{equation}
\begin{equation}\label{l22}
 \mathcal{R}'_{A,B}\mathcal{R}'_{A\cup B,C}=\mathcal{R}'_{B,C}\mathcal{R}'_{A,B\cup C}\,.
\end{equation}
\end{lemma}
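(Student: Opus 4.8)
The plan is to prove both identities by showing that each is merely a reordering of one and the same product of $R$-matrices, the reordering being realized through the Yang--Baxter moves (\ref{QYB2}) and the commutation relations (\ref{QYB3}) alone; nothing beyond the QYB (\ref{QYB}) is needed. I would first establish (\ref{l21}) and then obtain (\ref{l22}) by a duality. The starting point is bookkeeping of factors: expanding $\mathcal{R}_{C,A\cup B}$, $\mathcal{R}_{B,A}$ and $\mathcal{R}_{B\cup C,A}$, $\mathcal{R}_{C,B}$ according to (\ref{RIJ1}), one checks that both sides of (\ref{l21}) are products over exactly the same set of index pairs, namely all $R_{cx}$ with $c\in C$, $x\in A$, $c<x$; all $R_{cx}$ with $c\in C$, $x\in B$, $c<x$; and all $R_{ba}$ with $b\in B$, $a\in A$, $b<a$. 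Each factor carries the same spectral argument $z_i-z_j$ on both sides, so the entire content of the lemma is that the two prescribed orderings of this common collection agree as operators in ${\rm End}(\mathcal{H})$.

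Since disjoint factors commute by (\ref{QYB3}) and any triple of the form $R_{ij}R_{ik}R_{jk}$ may be reversed to $R_{jk}R_{ik}R_{ij}$ by (\ref{QYB2}), passing from one ordering to the other is a sequence of such elementary moves. I would organize this by induction, peeling off the smallest element $c_0$ of $C$ and using the equivalent form (\ref{RIJ2}) to gather all factors $R_{c_0,\cdot}$ at one end; the remaining factors reproduce the identity for the sets $A,B,C\setminus\{c_0\}$, to which the inductive hypothesis applies. The heart of the matter is already visible for singletons $C=\{c\}$, $B=\{b\}$, $A=\{a\}$ with $c<b<a$, where (\ref{l21}) reduces to the single move
\[
R_{cb}R_{ca}R_{ba}=R_{ba}R_{ca}R_{cb},
\]
which is precisely (\ref{QYB2}) with additive spectral parameters $z_c-z_b$, $z_c-z_a$, $z_b-z_a$; for the other orderings of $a,b,c$ the two sides either reduce to the identity or coincide factor by factor. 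Because every $R_{ij}$ carries the argument $z_i-z_j$ and (\ref{QYB2}) is stated with exactly these additive arguments, the spectral parameters always match the hypotheses of the Yang--Baxter move, so there is no compatibility condition to verify.

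The main obstacle is purely combinatorial: confirming that the two specific orderings dictated by (\ref{RIJ1})/(\ref{RIJ2}) for the left- and right-hand sides are genuinely connected by the above moves, rather than merely containing the same factors. This is delicate because $A$, $B$, $C$ need not be consecutive blocks but may interleave arbitrarily in $\{1,\dots,N\}$, so at each stage one must track carefully which triples are eligible for a Yang--Baxter move; equivalently, one identifies the common collection of factors with a reduced word for a permutation and appeals to the fact that all reduced words are linked by braid and commutation moves, which here are exactly (\ref{QYB2}) and (\ref{QYB3}). Finally, (\ref{l22}) follows from (\ref{l21}) without repeating the argument: the relabelling $i\mapsto N+1-i$ together with $z_i\mapsto z_{N+1-i}$ reverses every inequality $i<j$ into $i>j$, interchanges the orderings (\ref{RIJ1})/(\ref{RIJ2}) with (\ref{RIJ'1})/(\ref{RIJ'2}), and carries (\ref{l21}) onto (\ref{l22}) for an appropriately relabelled triple, so the two statements are equivalent.
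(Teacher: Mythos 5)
Your proposal is correct in substance but follows a genuinely different route from the paper. The paper's Appendix B proves (\ref{l21}) by a hands-on triple induction: it first checks the singleton case $|A|=|B|=|C|=1$ (where, for $c<b<a$, the identity is exactly (\ref{QYB2}) and is otherwise trivial), then enlarges $C$ keeping $|A|=|B|=1$ by repeated Yang--Baxter moves, then inducts on $|B|$ with $|A|=1$, and finally on $|A|$ --- all under the simplifying block assumption $c_1<\dots<c_{|C|}<b_1<\dots<b_{|B|}<a_1<\dots<a_{|A|}$, with the remark that arbitrary interleaving only costs extra uses of (\ref{QYB3}); the primed identity (\ref{l22}) is then ``proved similarly''. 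You instead observe that both sides of (\ref{l21}) are orderings of one and the same collection of factors (which is the inversion set of the permutation sorting the word that labels each position by its class, ranked $A<B<C$), and reduce the lemma to Tits--Matsumoto connectivity of reduced words: braid moves are (\ref{QYB2}) (with automatically matching additive arguments), commutation moves are (\ref{QYB3}). This is more conceptual and treats arbitrary interleaving of $A,B,C$ uniformly --- precisely the point the paper waves through. The price is a check you assert but do not perform: that the specific orderings (\ref{RIJ1})/(\ref{RIJ2}) on both sides are genuinely reduced-word (reflection) orderings, i.e.\ that for every triple $c<b<a$ with $c\in C$, $b\in B$, $a\in A$ the factor $R_{ca}$ sits between $R_{cb}$ and $R_{ba}$ on each side; this is true and is a short verification. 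Your alternative ``peeling'' induction on the smallest element of $C$ is rougher than the paper's: the block of factors $R_{c_0,\cdot}$ sits in the middle of the left-hand side, not at an end, and extracting it requires exactly the kind of explicit YBE work the paper spells out.

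One point needs repair: the deduction of (\ref{l22}) from (\ref{l21}). The relabelling $i\mapsto N+1-i$ does not carry $\mathcal{R}'_{A,B}$ to $\mathcal{R}_{\tilde{A},\tilde{B}}$; it carries it to the \emph{order-reversed} product. For instance, with $N=3$, $\mathcal{R}'_{\{3\},\{1,2\}}=R_{31}R_{32}$ is mapped to $R_{13}R_{12}$, whereas $\mathcal{R}_{\{1\},\{2,3\}}=R_{12}R_{13}$, and these two orderings differ by factors sharing a tensor index, so they need not coincide. Consequently the relabelled (\ref{l22}) is the factor-by-factor reversal of (\ref{l21}); since ``reversal'' is not an operation on operators (it depends on the factorization), (\ref{l22}) is not literally equivalent to (\ref{l21}) by relabelling alone, as you assert. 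The gap is harmless inside your own framework: reversing a word preserves the betweenness criterion, and each local relation (\ref{QYB2}), (\ref{QYB3}) is invariant under reversal, so the reversed orderings are again connected by the same moves and (\ref{l22}) follows by rerunning the reduced-word argument. But this step must be said explicitly; the bare relabelling does not do the job.
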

A  sketch of the proof is given in the Appendix.

Notice that we did not use the unitarity property in the above definitions and properties.
For a unitary $R$-matrix satisfying (\ref{q03}) we have (in addition to all the above mentioned statements):
\begin{equation}\label{un01}
 \mathcal{R}_{I,J}\mathcal{R}_{J,I}'={\rm Id}\prod\limits_{\substack{i\in I,j\in J\\i<j}}\phi(h,z_i-z_j)\phi(h,z_j-z_i)
\end{equation}
and
\begin{equation}\label{un02}
 \mathcal{R}'_{I,J}\mathcal{R}_{J,I}={\rm Id}\prod\limits_{\substack{i\in I,j\in J\\i>j}}\phi(h,z_i-z_j)\phi(h,z_j-z_i)\,.
\end{equation}

All the same notations are used for the normalized unitary $R$-matrix $\bar{R}^h(u)$ satisfying (\ref{q05}).  In this case we have
\begin{equation}\label{un03}
 \bar{\mathcal{R}}_{I,J}\bar{\mathcal{R}}_{J,I}'=\bar{\mathcal{R}}_{J,I}'\bar{\mathcal{R}}_{I,J}={\rm Id}\,.
\end{equation}


\subsection{Derivation of spin operators}\label{sect22}

Using notations (\ref{IJ}), (\ref{pI}) and (\ref{RIJ1}), (\ref{RIJ'1}) introduce the following set of $\MatM^{\otimes N}$-valued difference operators:
\begin{equation}
\label{Dspin2}
\mathcal{D}_k=
\sum_{|I|=k}(I^c,I) \cdot\bar{\mathcal{R}}_{I^c,I}\cdot \mathbf{p}_{I}\cdot \bar{\mathcal{R}}_{I,I^c}'\,,
\end{equation}
where $R$-matrices with bars are those normalized as in (\ref{un03}). Explicit form for these operators
is given in (\ref{q10}). We also write down a few examples in the end of the section.

The purpose of this paragraph is to present a derivation of (\ref{Dspin2}). At the same time we emphasize that the construction recipe does not guarantee commutativity, which is proved in the next sections.
Our derivation of (\ref{Dspin2}) is based on the symmetry with respect to transpositions
\begin{equation}\label{transpqKZ}
s_{i,i+1} P_{i,i+1}\bar{R}_{i,i+1}\in{\rm End}(\mH),
\end{equation}
where $\mH=(\mC^M)^{\otimes N}$,  $s_{i,i+1}$  are operators permuting coordinates $z_i$ and $z_{i+1}$,  and $P_{i,i+1}$ are the matrix permutation operators (\ref{P12}) permuting $i$-th and $(i+1)$-th tensor components of $\mH$.
The Macdonald operators (\ref{Dscalar}), (\ref{Macd}) are symmetric with respect to permutation of coordinates and preserve the space of symmetric polynomials. Replacing the usual symmetric group with its representation, we get the matrix-valued operators (\ref{Dspin2}) which are symmetric with respect to (\ref{transpqKZ}).
Similar approach  was suggested by Uglov \cite{Uglov} and Lamers-Pasquier-Serban \cite{LPS} for trigonometric (XXZ) ${\rm GL}_2$ $R$-matrix. Similar constructions were suggested in \cite{Chered3,Jimbo,HaSe}.  The idea was to consider the $q$-bosonic Fock space  which in trigonometric case is a subspace of $\mH$ highlighted by the ``local condition'' \cite{FSmirnov} from the qKZ equation \cite{FrR}:
\begin{equation}\label{localqKZ}
s_{i,i+1} P_{i,i+1}\bar{R}_{i,i+1}={\rm Id}.
\end{equation}
However,
the commutativity of spin operators  in \cite{Uglov,LPS}  was based on the properties of q-bosonic Fock space, which is constructed as representations of affine Hecke algebra. This proof is appropriate in the trigonometric case only. In our work we deal with a more general elliptic case and prove the commutativity of operators in a different way.

Consider symmetric group $S_N$ generated by  relations:
\beq\label{braid}
\sigma_{i-1}\sigma_{i}\sigma_{i-1}=\sigma_{i}\sigma_{i-1}\sigma_{i}\,,
\eq
\beq \label{braid2}
\sigma_{i}\sigma_{j}=\sigma_{j}\sigma_{i} \qquad \text{for } j\neq i\pm 1
\eq
and
\beq\label{braid3}
(\sigma_{i})^2=1\,.
\eq
Obviously, it has representation $\sigma_{i}=s_{i,i+1}$, where $s_{i,i+1}$, $i=1,...,N-1$ are permutations of
variables $z_1,...,z_N$:
\beq\label{p0}
s_{i,i+1}f(z_1,...,z_i,z_{i+1},...,z_N)=f(z_1,...,z_{i+1},z_i,...,z_N)\,.
\eq
Here $f$ is any function (for which the action of $D_k$ operators is well-defined). Denote by $s_\omega$
the permutation operator representing $w\in S_N$. For example, for the cycle $(12\dots j)$ we have
\beq\label{cycle0}
\begin{array}{c}
s_{(12\dots j)}=s_{12}s_{23}\dots s_{j-1,j}\,.
\end{array}
\eq
For $i_1<i_2<\dots<i_k$ denote by $\{i_1,i_2,\dots,i_k\}\in S_N$ the (shortest) permutation $i_m\rightarrow m$ for all $1\le m\le k$. It can be presented as a product of cycles:
\beq\label{permut}
\{i_1,i_2,\dots,i_k\}=(k,\dots,i_k)(k-1,\dots,i_{k-1})\dots(2,\dots,i_2)(1,\dots,i_1)\,.
\eq
The Macdonald-Ruijsenaars operators (\ref{Dscalar}) or (\ref{D+}) are symmetric with respect to the action of permutation group (\ref{p0}). Therefore, each of these operators can be represented as a sum over certain permutations acting on some ''first'' term:
\beq\label{Dspin0}
D_k=\sum_{i_1<i_2<\dots<i_k}s_{\{i_1,i_2,\dots,i_k\}}^{-1}(I_0^c,I_0)\,\mathbf{p}_{I_0}\,s_{\{i_1,i_2,\dots,i_k\}}\,,
\eq
where we denote by  $I_0=\{1,2,\dots,k\}$ the subset in $k$ elements and its complement $I_0^c=\{k+1,\dots N\}$ and use the notation (\ref{IJ}). The sum in (\ref{Dspin0}) is over all (ordered) $k$-element subsets of $\{1,\dots,N\}$.

Another well known representation of the (braid) relations (\ref{braid})-(\ref{braid2}) is given by
$\sigma_{i}=R^\hbar_{i,i+1}(z_{i}-z_{i+1})P_{i,i+1}\in{\rm End}(\mH)$, $\mH=(\mC^M)^{\otimes N}$, where $P_{ij}$ are permutation matrix-valued operators (\ref{P12})-(\ref{P12-1}). In this case (\ref{braid})-(\ref{braid2}) are equivalent to the Yang-Baxter equations (\ref{QYB2})-(\ref{QYB3}). If the $R$-matrix entering representation is unitary with normalization (\ref{q05}), then the involution property (\ref{braid3}) holds as well.

Consider representation of (\ref{braid})-(\ref{braid3}) given by the composition of the previously discussed:
\beq\label{sigma}
\sigma_{i}=\bar{R}_{i,i+1}(z_{i}-z_{i+1})P_{i,i+1}s_{i,i+1}\,.
\eq
It is easy to see that (\ref{braid})-(\ref{braid3}) are fulfilled. Similarly to (\ref{cycle0}) introduce
\beq\label{cycle}
\begin{array}{c}
\sigma_{(12\dots j)}=\sigma_{12}\sigma_{23}\dots \sigma_{j-1,j}=\bar{R}_{12}(z_1-z_2)\bar{R}_{13}(z_1-z_3)\dots \bar{R}_{1j}(z_1-z_j)P_{(12\dots j)}s_{(12\dots j)}=\\ \ \\
=P_{(12\dots j)}s_{(12\dots j)}\bar{R}_{j1}(z_j-z_1)\bar{R}_{j2}(z_j-z_2)\dots \bar{R}_{j,j-1}(z_j-z_{j-1})
\end{array}
\eq
and also define $\sigma_{\{i_1,i_2,\dots,i_k\}}$ as in (\ref{permut}).

Introduce the operators ${\mathcal D}_k$ which are matrix generalization of operators (\ref{Dspin0}) acting in ${\rm End}(\mH)$:
\beq\label{Dspin1}
 {\mathcal D}_k=
 \sum_{i_1<i_2<\dots<i_k}
 \sigma_{\{i_1,i_2,\dots,i_k\}}^{-1}(I_0^c,I_0)\,\mathbf{p}_{I_0}\,\sigma_{\{i_1,i_2,\dots,i_k\}}\,.
\eq
 Let us show that after some transformations the dependence on permutations $s_{ij}$ and $P_{ij}$ is cancelled out, and the operators (\ref{Dspin1}) take the form (\ref{Dspin2}). The latter follows from the following Lemma.
\begin{lemma}\label{lem22} The following relations hold true:
 \beq \label{b3}
 \sigma_{\{i_1,i_2,\dots,i_k\}}=P_{\{i_1,i_2,\dots,i_k\}}s_{\{i_1,i_2,\dots,i_k\}}\bar{\mathcal{R}}_{I,I^c}'\,,
 \eq
 \beq \label{b4}
\sigma_{\{i_1,i_2,\dots,i_k\}^{-1}}=\bar{\mathcal{R}}_{I^c,I}P_{\{i_1,i_2,\dots,i_k\}^{-1}}s_{\{i_1,i_2,\dots,i_k\}^{-1}}\,.
 \eq
\end{lemma}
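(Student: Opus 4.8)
The plan is to prove (\ref{b3}) by induction on $k=|I|$ and then to obtain (\ref{b4}) from it by a short duality argument. The base case $k=1$ is exactly the single-cycle identity (\ref{cycle}): for $I=\{i_1\}$ the permutation $\{i_1\}$ is the cycle $(1,2,\dots,i_1)$, and the second line of (\ref{cycle}) reads $\sigma_{(1\dots i_1)}=P_{(1\dots i_1)}s_{(1\dots i_1)}\bar R_{i_1,1}\cdots\bar R_{i_1,i_1-1}$, which is precisely $P_{\{i_1\}}s_{\{i_1\}}\bar{\mathcal R}'_{\{i_1\},\{i_1\}^c}$. The engine of the induction is a set of elementary commutation rules that I would record first: since $s_{ab}$ acts on coordinates while $P_{cd}$ and $\bar R_{cd}$ act on tensor factors, one has $[s_{ab},P_{cd}]=0$, and for any $w\in S_N$ represented through (\ref{sigma}) by $P_w s_w$ one gets the covariance relation $\bar R_{ab}(z_a-z_b)\,P_w s_w=P_w s_w\,\bar R_{w^{-1}(a)w^{-1}(b)}(z_{w^{-1}(a)}-z_{w^{-1}(b)})$. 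In words: dragging a permutation block leftwards through an $R$-matrix relabels both its tensor indices and its spectral argument by $w^{-1}$.

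For the inductive step I would use the factorization $\{i_1,\dots,i_k\}=(k,\dots,i_k)\,\{i_1,\dots,i_{k-1}\}$ read off from (\ref{permut}) (it sends $i_m\to m$ for every $m\le k$ and is of minimal length), so that $\sigma_{\{i_1,\dots,i_k\}}=\sigma_{(k,\dots,i_k)}\,\sigma_{\{i_1,\dots,i_{k-1}\}}$. Applying (\ref{cycle}) to the leading cycle and the induction hypothesis to the tail gives $\sigma_{\{i_1,\dots,i_k\}}=P_{(k\dots i_k)}s_{(k\dots i_k)}\big(\bar R_{i_k,k}\cdots\bar R_{i_k,i_k-1}\big)P_{w'}s_{w'}\,\bar{\mathcal R}'_{I',(I')^c}$ with $I'=\{i_1,\dots,i_{k-1}\}$ and $w'=\{i_1,\dots,i_{k-1}\}$. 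I would then drag $P_{w'}s_{w'}$ to the far left through the block $\bar R_{i_k,k}\cdots\bar R_{i_k,i_k-1}$ by the covariance rule; since $w'$ fixes $i_k$ (because $i_k>i_{k-1}$), the first index and the argument $z_{i_k}$ are untouched and each factor becomes $\bar R_{i_k,\,(w')^{-1}(j)}$, while the two permutation blocks merge into $P_{(k\dots i_k)w'}s_{(k\dots i_k)w'}=P_{\{i_1,\dots,i_k\}}s_{\{i_1,\dots,i_k\}}$.

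It remains to identify the reindexed $R$-matrix block together with $\bar{\mathcal R}'_{I',(I')^c}$ as $\bar{\mathcal R}'_{I,I^c}$, and this is where the real content sits. I would invoke the fact that $\{i_1,\dots,i_{k-1}\}$ is a minimal-length (Grassmannian) permutation, so $(w')^{-1}$ is order-preserving on the complement of $\{1,\dots,k-1\}$: it maps $\{k,\dots,i_k-1\}$ monotonically onto $\{1,\dots,i_k-1\}\setminus\{i_1,\dots,i_{k-1}\}=\{j<i_k:\,j\in I^c\}$. Hence $\bar R_{i_k,(w')^{-1}(k)}\cdots\bar R_{i_k,(w')^{-1}(i_k-1)}$ is exactly the increasing-ordered product $\overrightarrow{\prod\limits_{j<i_k,\,j\in I^c}}\bar R_{i_k,j}$, i.e. the leading ($l=k$) block of (\ref{RIJ'1}); because these factors share the index $i_k$ they do not commute by (\ref{QYB3}), so keeping this order correct is essential. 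Since the remaining blocks of $\bar{\mathcal R}'_{I',(I')^c}$ already coincide with those of $\bar{\mathcal R}'_{I,I^c}$ (the extra exclusion of $i_k$ is automatic, as $j<i_m<i_k$ there), this establishes (\ref{b3}).

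Finally, (\ref{b4}) follows with no further induction: inverting (\ref{b3}) and using $\sigma_{\{i_1,\dots,i_k\}}^{-1}=\sigma_{\{i_1,\dots,i_k\}^{-1}}$ (the generators (\ref{sigma}) are involutions by unitarity), the relation (\ref{un03}) which gives $(\bar{\mathcal R}'_{I,I^c})^{-1}=\bar{\mathcal R}_{I^c,I}$, together with $P_w^{-1}=P_{w^{-1}}$, $s_w^{-1}=s_{w^{-1}}$ and $[s_{ab},P_{cd}]=0$, yields $\sigma_{\{i_1,\dots,i_k\}^{-1}}=\bar{\mathcal R}_{I^c,I}\,P_{\{i_1,\dots,i_k\}^{-1}}s_{\{i_1,\dots,i_k\}^{-1}}$. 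The main obstacle throughout is the index and ordering bookkeeping in the inductive step, namely making sure the non-commuting factors $\bar R_{i_k,\cdot}$ land in exactly the order prescribed by (\ref{RIJ'1}); this is precisely what the order-preserving property of the minimal permutations $\{i_1,\dots,i_k\}$ secures.
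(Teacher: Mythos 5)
Your proof is correct. For (\ref{b3}) it is essentially the paper's own argument: the same cycle factorization (\ref{permut}), the same cycle formula (\ref{cycle}), and the same mechanism of dragging the blocks $P_w s_w$ leftwards through the strings of $\bar R$-matrices with relabelling of indices by $w^{-1}$, the order-preserving (Grassmannian) property of $\{i_1,\dots,i_{k-1}\}$ guaranteeing that the relabelled factors $\bar R_{i_k,\cdot}$ appear exactly in the order prescribed by (\ref{RIJ'1}). The paper organizes this as an iterative computation (demonstrated on the last two cycles of (\ref{b11}) and then ``continued in the same way''), while you package precisely the same step as the induction $k-1\to k$; this is a cleaner formalization, not a different idea. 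Where you genuinely depart from the paper is (\ref{b4}): the paper proves it by a second, mirror-image computation (moving all permutations to the right), whereas you obtain it directly from (\ref{b3}) by inversion, using that $\sigma$ is a representation of $S_N$ (braid relations together with the involution (\ref{braid3}), which holds by unitarity), that $P_{w^{-1}}=P_w^{-1}$ and $s_{w^{-1}}=s_w^{-1}$ commute with one another, and that $(\bar{\mathcal{R}}'_{I,I^c})^{-1}=\bar{\mathcal{R}}_{I^c,I}$ by (\ref{un03}). This shortcut halves the bookkeeping at the price of explicitly invoking unitarity of $\bar R$, which the paper's left/right-symmetric computations do not need inside the lemma itself; since the paper's construction already assumes the unitary normalization (\ref{q05}), your route is legitimate and arguably tidier.
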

Indeed, conjugating $(I_0,I_0^c)$ by $\sigma_{\{i_1,i_2,\dots,i_k\}}$ we get
\beq
\begin{array}{c}
\sigma_{\{i_1,i_2,\dots,i_k\}}^{-1}(I_0^c,I_0)\,\mathbf{p}_{I_0}\, \sigma_{\{i_1,i_2,\dots,i_k\}}=
\\ \ \\
=
\bar{\mathcal{R}}_{I^c,I}P_{\{i_1,i_2,\dots,i_k\}^{-1}}
s_{\{i_1,i_2,\dots,i_k\}^{-1}}(I_0^c,I_0)
\,\mathbf{p}_{I_0}\,P_{\{i_1,i_2,\dots,i_k\}}s_{\{i_1,i_2,\dots,i_k\}}
\bar{\mathcal{R}}_{I,I^c}'=
\\ \ \\
 =\bar{\mathcal{R}}_{I^c,I}(I^c,I)\,\mathbf{p}_{I}\,\bar{\mathcal{R}}_{I,I^c}'\,,
\end{array}
\eq
which establishes equality of (\ref{Dspin1}) and (\ref{Dspin2}).
Let us prove the Lemma \ref{lem22}.
\begin{proof}
 Due to (\ref{permut}) we have
 \beq\label{b11}
 \sigma_{\{i_1,i_2,\dots,i_k\}}=\sigma_{(k,\dots,i_k)}
\sigma_{(k-1,\dots,i_{k-1})}\dots \sigma_{(2,\dots,i_2)}\sigma_{(1,\dots,i_1)}
 \eq
Using the second equality of (\ref{cycle}) we rewrite each cycle in (\ref{b11}) and then move all permutations $s_w$ and $P_w$ to the left.
 Let us demonstrate this for the last two cycles of (\ref{b11}):
 \beq\label{b1}
 \begin{array}{c}
\sigma_{(2,\dots,i_2)}\sigma_{(1,\dots,i_1)}=
\\ \ \\
=P_{(2,\dots ,i_2)}s_{(2,\dots ,i_2)}\bar{R}_{i_2,2}\bar{R}_{i_2,3}
\dots\bar{R}_{i_2,i_2-1}P_{(1,\dots ,i_1)}s_{(1,\dots ,i_1)}\bar{R}_{i_1,1}\bar{R}_{i_1,2}\dots \bar{R}_{i_1,i_1-1}=
\\ \ \\
P_{(2,\dots, i_2)(1,\dots ,i_1)}s_{(2,\dots ,i_2)(1,\dots, i_1)}\bar{R}_{i_2,1}\bar{R}_{i_2,2}\dots \bar{R}_{i_2,i_1-1} \bar{R}_{i_2,i_1+1}\dots
\bar{R}_{i_2,i_2-1}\bar{R}_{i_1,1}\bar{R}_{i_1,2}\dots \bar{R}_{i_1,i_1-1}=
\\ \ \\
{=\displaystyle
 P_{(2,\dots, i_2)(1,\dots, i_1)}s_{(2,\dots, i_2)(1,\dots, i_1)}\overrightarrow{\prod\limits_{\substack{j_1\in I^c\\j_2<i_2} }}\bar{R}_{i_2,j_2}\overrightarrow{\prod\limits_{\substack{j_1\in I^c\\j_1<i_1} }}\bar{R}_{i_1,j_1}\,.
}
 \end{array}
 \eq
 Here we use that for $I=\{i_1,\dots, i_k\}$ the complement set is $I^c=\{1,\dots,i_1-1,i_1+1,\dots,i_2-1,i_2+1,\dots, i_k-1,i_k+1,\dots ,N\}$. By continuing the equality  (\ref{b11}) in the same way we get
 \beq
 \begin{array}{c}
{\displaystyle
\sigma_{\{i_1,i_2,\dots,i_k\}}=
}
\\ \ \\
{\displaystyle = P_{\{i_1,i_2,\dots,i_k\}}s_{\{i_1,i_2,\dots,i_k\}}\overrightarrow{\prod\limits_{\substack{j_k\in I^c \\j_k<i_k}}}\bar{R}_{i_k,j_k}\dots \overrightarrow{\prod\limits_{\substack{j_1\in I^c\\j_2<i_2} }}\bar{R}_{i_2,j_2}\overrightarrow{\prod\limits_{\substack{j_1\in I^c\\j_1<i_1 }}}\bar{R}_{i_1,j_1}=
}
\\ \ \\
{\displaystyle
=P_{\{i_1,i_2,\dots,i_k\}}s_{\{i_1,i_2,\dots,i_k\}}\bar{\mathcal{R}}_{I,I^c}'\,.
}
 \end{array}
 \eq
 The relation (\ref{b4}) can be proved in a similar manner by moving all permutations to the right.
\end{proof}

It is necessary to stress that the obtained operators ${\mathcal D}_k$ do not commute by construction. For example, formally one can consider the elliptic functions $\phi$ in (\ref{q10}) together with the rational $R$-matrix
 ${\bar R}_{ij}=(z_i-z_j+\hbar P_{ij})/(z_i-z_j+\hbar)$. Then such operators do not commute. However, we will see that they indeed commute for elliptic $R$-matrix.

\subsection{Examples of spin operators}\label{sect23}
Let us write down examples of the operators (\ref{q10}) or (\ref{Dspin2}) for $N=2,3,4$.

\paragraph{Example. $N=2$:}
 \beq\label{ex01}
 \begin{array}{c}
{\displaystyle
{\mathcal D}_1=\phi(z_2-z_1)p_1+\phi(z_1-z_2)\bar{R}^\hbar_{12}\left(z_1-z_2\right)p_2
\bar{R}^\hbar_{21}\left(z_2-z_1\right)\,,
}
 \end{array}
 \eq
 \beq\label{ex02}
 \begin{array}{c}
\displaystyle{
{\mathcal D}_2=p_1p_2\,.
}
 \end{array}
 \eq

\paragraph{Example. $N=3$:}
 \beq\label{ex03}
 \begin{array}{lll}
{\mathcal D}_1 &=&\phi(z_2-z_1)\phi(z_3-z_1)p_1+
\\ \ \\
& &+\phi(z_1-z_2)\phi(z_3-z_2)\bar{R}^\hbar_{12}\left(z_1-z_2\right)p_2 \bar{R}^\hbar_{21}\left(z_2-z_1\right)+
\\ \ \\
& &+\phi(z_1-z_3)\phi(z_2-z_3)\bar{R}^\hbar_{23}\left(z_2-z_3\right)\bar{R}^\hbar_{13}\left(z_1-z_3\right)p_3 \bar{R}^\hbar_{31}\left(z_3-z_1\right)\bar{R}^\hbar_{32}\left(z_3-z_2\right)\,,
 \end{array}
 \eq
 \beq\label{ex04}
 \begin{array}{lll}
{\mathcal D}_2&=&\phi(z_3-z_1)\phi(z_3-z_2)p_1p_2+
\\ \ \\
& &+\phi(z_2-z_1)\phi(z_2-z_3)\bar{R}^\hbar_{23}\left(z_2-z_3\right)p_1p_3 \bar{R}^\hbar_{32}\left(z_3-z_2\right)+
\\ \ \\
& &+\phi(z_1-z_2)\phi(z_1-z_3)\bar{R}^\hbar_{12}\left(z_1-z_2\right)\bar{R}^\hbar_{13}\left(z_1-z_3\right)p_2p_3 \bar{R}^\hbar_{31}\left(z_3-z_1\right)\bar{R}^\hbar_{21}\left(z_2-z_1\right)\,,
 \end{array}
 \eq
 and ${\mathcal D}_3=p_1p_2p_3$.

\paragraph{Example. $N=4$:}
 \beq\label{ex06}
 \begin{array}{lll}
 {\mathcal D}_1 & = &
 \displaystyle{
 \phi(z_{21})\phi(z_{31})\phi(z_{41})p_1+
 }
\\ \ \\
 & &
 \displaystyle{
 +\phi(z_{12})\phi(z_{32})\phi(z_{42})\bar{R}_{12}p_2\bar{R}_{21}+
}
\\ \ \\
 & &
 \displaystyle{
 +\phi(z_{13})\phi(z_{23})\phi(z_{43})\bar{R}_{23}\bar{R}_{13}p_3\bar{R}_{31}\bar{R}_{32}+
}
\\ \ \\
& &
\displaystyle{
+\phi(z_{14})\phi(z_{24})\phi(z_{34})\bar{R}_{34}\bar{R}_{24}\bar{R}_{14}p_4\bar{R}_{41}\bar{R}_{42}\bar{R}_{43}\,,\qquad\qquad\qquad\quad\ \
}
 \end{array}
 \eq
 \beq\label{ex07}
 \begin{array}{lll}
 {\mathcal D}_2 & = & \phi(z_{31})\phi(z_{32})\phi(z_{41})\phi(z_{42})p_1p_2+
 \\ \ \\
 & &+\phi(z_{21})\phi(z_{23})\phi(z_{41})\phi(z_{43})\bar{R}_{23}p_1p_3\bar{R}_{32}+
 \\ \ \\
 & &+\phi(z_{21})\phi(z_{24})\phi(z_{31})\phi(z_{34})\bar{R}_{34}\bar{R}_{24}p_1p_4\bar{R}_{42}\bar{R}_{43}+
 \\ \ \\
& &+\phi(z_{12})\phi(z_{13})\phi(z_{42})\phi(z_{43})\bar{R}_{12}\bar{R}_{13}p_2p_3\bar{R}_{31}\bar{R}_{21}+
\\ \ \\
& &+\phi(z_{12})\phi(z_{14})\phi(z_{32})\phi(z_{34})\bar{R}_{12}\bar{R}_{34}\bar{R}_{14}p_2p_4\bar{R}_{41}\bar{R}_{43}\bar{R}_{21}+
 \\ \ \\
& &+\phi(z_{13})\phi(z_{14})\phi(z_{23})\phi(z_{24})\bar{R}_{23}\bar{R}_{13}\bar{R}_{24}\bar{R}_{14}p_3p_4\bar{R}_{41}\bar{R}_{42}\bar{R}_{31}\bar{R}_{32}\,,
 \end{array}
 \eq
 \beq\label{ex08}
 \begin{array}{lll}
 {\mathcal D}_3& = &\phi(z_{41})\phi(z_{42})\phi(z_{43})p_1p_2p_3+
  \\ \ \\
& & +\phi(z_{31})\phi(z_{32})\phi(z_{34})\bar{R}_{34}p_1p_2p_4\bar{R}_{43}
  \\ \ \\
& & +\phi(z_{21})\phi(z_{23})\phi(z_{24})\bar{R}_{23}\bar{R}_{24}p_1p_3p_4\bar{R}_{42}\bar{R}_{32}+
  \\ \ \\
& & +\phi(z_{12})\phi(z_{13})\phi(z_{14})\bar{R}_{12}\bar{R}_{13}\bar{R}_{14}p_2p_3p_4\bar{R}_{41}\bar{R}_{31}\bar{R}_{21}\,\qquad\qquad\
 \end{array}
 \eq
and ${\mathcal D}_4=p_1p_2p_3p_4$.

\section{Derivation of $R$-matrix identities}\label{sect3}
\setcounter{equation}{0}

We first recall the derivation of identities in the scalar case. It is a slightly modified version of the
original derivation by S.N.M. Ruijsenaars \cite{Ruij} adapted for generalization to the spin case. Then we proceed to $R$-matrix identities and finally derive (\ref{a20})-(\ref{a22}).

\subsection{Identities for scalar Macdonald-Ruijsenaars operators}\label{sect31}

Here we recall the derivation of identities in the scalar case.
%
Besides (\ref{D+}) introduce also
\beq\label{D-}
D_{-k}=\sum\limits_{\substack{|I|=k}}(I,I^c)\,\mathbf{p}_{I}^{-1},\qquad k=1,\dots,N.
\eq
It can be verified that
\beq\label{D--}
D_{k-N}=D_k D_{-N}\qquad k=1,\dots,N-1.
\eq
We use the notations $I_+$ and $I_-$ to highlight the shifts of all $z_i,\ i\in I$ by $\pm\eta$ respectively:
\beq
(I_{\pm},J)=(\mathbf{p}_I^{\mp 1} I \mathbf{p}_I^{\pm 1},J).
\eq
It is easy to check the following properties:
\beq\label{p1}
(I_+,J_+)=(I_-,J_-)=(I,J)\,,
\eq
\beq\label{p2}
(I,J_+)=(I_-,J) \qquad (I_+,J)=(I,J_-)\,.
\eq
They hold true since the expression (\ref{IJ}) depends on differences of coordinates $(z_i-z_j)$ only.
\begin{predl}\label{propscalar}
\cite{Ruij} The commutativity
 \beq\label{com}
 [D_k,D_l]=0\qquad \forall k,l=1,\dots ,N\
 \eq
  holds if and only if
  \beq\label{Idsc}
\sum\limits_{\substack{|I|=k}}\left((I^c,I)(I_-,I^c)-(I,I^c)(I^c_-,I)\right)=0 \qquad \forall k\in\{1,\dots,N\}\quad \forall N.
  \eq
\end{predl}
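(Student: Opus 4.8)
The plan is to prove Proposition~\ref{propscalar} by directly computing the commutator $[D_k,D_l]$ in the notation (\ref{D+}), isolating the shift operators $\mathbf{p}_I$, and showing that the vanishing of the resulting coefficients is exactly (\ref{Idsc}). The key observation is that since $D_N=\mathbf{p}_{\{1,\dots,N\}}$ is (up to the overall factor $(I^c,I)=1$ for $I=\{1,\dots,N\}$) just the total shift, it is central, and by (\ref{D--}) the operators $D_{-k}$ and $D_{k-N}$ are expressible through $D_k$ and $D_{-N}$. Hence it suffices to analyze $[D_k,D_l]$ for the ``positive'' operators, and in fact the cleanest route is to show that commutativity with $D_1$ alone generates the full system, or equivalently to reduce everything to the pairwise bracket $[D_k,D_{-1}]$ or $[D_k,D_l]$ with a single interior structure.

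First I would fix $I$ with $|I|=k$ and $J$ with $|J|=l$ and compute the product $(J^c,J)\mathbf{p}_J\cdot(I^c,I)\mathbf{p}_I$. The point is that $\mathbf{p}_J$ acts on the coefficient $(I^c,I)$ by shifting the variables $z_i$, $i\in J$, and using (\ref{p1})--(\ref{p2}) one rewrites the shifted coefficient in terms of unshifted $(\cdot,\cdot)$ factors with $\pm$ decorations. After collecting, the commutator $[D_k,D_l]=\sum_{I,J}\big(\dots\big)\mathbf{p}_{I\cup J\setminus(I\cap J)}$ splits according to the symmetric difference of $I$ and $J$: terms with $I\cap J$ fixed and the same total shift $\mathbf{p}_{I\triangle J}$ must cancel among themselves. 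The terms where $I\cap J=\varnothing$ and $I\cup J$ has a fixed size contribute the genuinely new obstruction, while overlapping terms pair off by the symmetry of the construction. The decoration rules (\ref{p1})--(\ref{p2}) are what let one match a term coming from $\mathbf{p}_J$ acting through $\mathbf{p}_I$ against a term from the opposite order, and the difference is precisely the combination $(I^c,I)(I_-,I^c)-(I,I^c)(I^c_-,I)$ after relabeling.

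The \emph{main obstacle} I expect is the bookkeeping: showing that after collecting coefficients of a fixed shift operator $\mathbf{p}_K$, the only surviving requirement is (\ref{Idsc}) at level $|K|$ rather than a more complicated tangle involving both $k$ and $l$. The trick here is that the Macdonald--Ruijsenaars structure is ``multiplicative in disjoint blocks'': when one expands $[D_k,D_l]$ and groups by the support $K=I\triangle J$ of the surviving shift, the sum over the ways to split $K$ into the contributions from the two operators factorizes, and one recognizes an inner sum that is independent of how $k,l$ were chosen and depends only on $K$. I would make this precise by proving the identity first for $l=1$ (where $J=\{j\}$ is a singleton and the shift bookkeeping is transparent), establishing that $[D_k,D_1]=0$ for all $k$ is equivalent to (\ref{Idsc}), and then arguing that the general $[D_k,D_l]=0$ follows because the $D_k$ are polynomials in a generating set containing $D_1$ and $D_{-N}$, or alternatively by induction using the same cancellation pattern with $J$ of size $l$. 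The direction ``commutativity $\Rightarrow$ identities'' is obtained by specializing to a dense set of points $z_1,\dots,z_N$ (or by reading off the coefficient of a fixed $\mathbf{p}_K$, which must vanish as an identity in the $z_i$ since the shift operators are linearly independent), and the converse ``identities $\Rightarrow$ commutativity'' is the same computation run in reverse. I would defer the detailed index-chasing of the decoration rules, treating it as the routine but delicate core of the argument.
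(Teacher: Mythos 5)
Your core mechanism --- expand the commutator, group terms by the composite shift operator, pull out of each group a common factor that is independent of the splitting, and recognize the inner sum as (\ref{Idsc}) --- is the right one, and it can be made to work on $[D_k,D_l]$ directly. However, your proposal contains three concrete errors, two of which would derail the proof if relied upon. (i) The $D_k$ are \emph{not} polynomials in a generating set containing $D_1$ and $D_{-N}$; if they were, commutativity would be automatic and the proposition vacuous. Relatedly, ``$[D_k,D_1]=0$ for all $k$ is equivalent to (\ref{Idsc})'' is false: grouping $[D_k,D_1]$ by shifts only ever produces those instances of (\ref{Idsc}) in which one block of the splitting is a singleton (the $k=1$, equivalently $k=N-1$, cases); the identities for intermediate $k$ never appear, so your ``cleanest route'' does not exist and one must run the computation for general $l$, as your fallback suggests. (ii) For two positive operators the composite shift is $\mathbf{p}_I\mathbf{p}_J=\mathbf{p}_{I\triangle J}\,\mathbf{p}_{I\cap J}^{2}$, not $\mathbf{p}_{I\triangle J}$: shifts on the overlap double rather than cancel, and the grouping must be by the pair $(I\triangle J,\,I\cap J)$. (iii) The overlapping terms do \emph{not} ``pair off by the symmetry of the construction.'' Writing $T(A,B)=(B,A)(A_-,B)$ with $A=I\setminus J$, $B=J\setminus I$, the second term in each bracket is $T(B,A)$, so the symmetry pairing telescopes only when $|A|=|B|$, i.e.\ $k=l$, where there is nothing to prove. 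For $k\neq l$ the sum over splittings $A\sqcup B=S$ of $S=I\triangle J$ equals $\sum_{|A|=k-|C|}T(A,B)-\sum_{|A|=l-|C|}T(A,B)$, and this vanishes precisely because of (\ref{Idsc}) applied to the \emph{smaller} set $S$ (note $k-|C|$ and $l-|C|$ are complementary sizes in $S$). This is exactly why (\ref{Idsc}) must be demanded for all $N$, and it is an ingredient of the equivalence, not a consequence of symmetry.

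For comparison, the paper sidesteps the doubled shifts of (ii) by using (\ref{D--}) in the opposite direction from yours: it introduces $D_{-l}$ (formula (\ref{D-})) and reduces the claim to $[D_k,D_{-l}]=0$. In the mixed commutator the shifts cancel on $C=I\cap J$, the surviving operator is $\mathbf{p}_A\mathbf{p}_B^{-1}$, and the coefficient factorizes as in (\ref{Dcomres}): a prefactor depending only on $(A,B)$ times the expression (\ref{CD}), which is (\ref{Idsc}) summed over splittings $C\sqcup D$ of $(A\cup B)^c$. Your direct route places the identity on the symmetric difference $I\triangle J$ instead of on its complement; once points (i)--(iii) are repaired (in particular, the prefactor in your grouping is $(S,C)(D,S)(D,C)(D_+,C)$, which indeed depends only on $(S,C)$ and pulls out of the splitting sum), it yields a correct proof of the same equivalence.
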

The relation (\ref{Idsc}) is given explicitly in (\ref{IdenRuij}). It can be viewed as a functional equation for the function $\phi(z)$.
\begin{proof}
Since $D_{\pm N}$ commutes with all $D_k$ for $k=\pm1,\dots,\pm(N-1)$ and due to the relation (\ref{D--}), it is enough to verify  commutativity (\ref{com})  for $D_k$ and $D_{-l }$,  $ k,l=1,\dots ,N$.
Using (\ref{D+}) and (\ref{D-}) we obtain
\beq\label{a797}
[D_k,D_{-l}]=\sum\limits_{\substack{|I|=k\\ |J|=l}}\left((I^c,I)\mathbf{p}_{I}(J,J^c)\mathbf{p}_{J}^{-1}-(J,J^c)\mathbf{p}_{J}^{-1}(I^c,I)\mathbf{p}_{I}\right)\,.
\eq
Introduce the pairwise disjoint sets
\beq
A=I\setminus J,\quad B=J\setminus I, \quad C=I\cap J, \quad D=(I\cup J)^c\,.
\eq
Then, using the properties  (\ref{p1}) and (\ref{p2}), the r.h.s. of (\ref{a797}) takes the form:
\beq\label{Dcomres}
[D_k,D_{-l}]=\sum\limits_{\substack{|I|=k\\ |J|=l}}(B\cup C\cup D,A)(B,C\cup D)(B,A_-)\left((D,C)(C_-,D)-(C,D)(D_-,C)\right)\mathbf{p}_A\mathbf{p}_B^{-1}\,.
\eq
The expression $(B\cup C\cup D,A)(B,C\cup D)(B,A_-)$ depends on sets $A$ and $B$ only. Thus, the coefficient at $\mathbf{p}_A\mathbf{p}_{B}^{-1}$ vanishes if and only if
\beq\label{CD}
\sum\limits_{\substack{|C|=k-|A|\\ |D|=N-k-|B|\\C\cup D=(A\cup B)^c}}\left((D,C)(C_-,D)-(C,D)(D_-,C)\right)=0\,,
\eq
where the sum is over all disjoint sets $C$ and $D$ such that $C\cup D=(A\cup B)^c$ and $|C|=k-|A|,\ |D|=N-k-|B|$. The identity (\ref{CD}) is equivalent to (\ref{Idsc}).
\end{proof}

It was argued in \cite{Ruij} that the identities (\ref{Idsc}) considered as a functional equations for the function $\phi$ are reduced
to a single equation,
which provides (among meromorphic functions on elliptic curve) solution given by the elliptic Kronecker function (\ref{q01}) only (up to some normalization factors). Trigonometric and rational solutions are obtained by degeneration as given in
(\ref{a081}).

\subsection{$R$-matrix identities for spin Macdonald-Ruijsenaars operators}\label{sect32}
In this Section we show that the commutativity of spin Macdonald-Ruijsenaars operators is equivalent to a set of $R$-matrix identities. Let us reformulate the Theorem \ref{th1} using notations
(\ref{RIJ1}), (\ref{RIJ'1}).
\begin{theor}\label{th3}
The operators $\mathcal{D}_k$ (\ref{Dspin2}),(\ref{q10}) commute with each other iff the following set of identities holds for any $k=1,2,\dots m$ and any $m\le N$:
\begin{equation}\label{relRI}
\sum_{|I|=k}\left(\mathcal{R}_{I^c,I}\cdot\mathcal{R}_{I_-,I^c}'\cdot\mathcal{R}_{I_-,I^c}\cdot\mathcal{R}_{I^c,I}'-\mathcal{R}_{I,I^c}\cdot\mathcal{R}_{I^c_-,I}'\cdot\mathcal{R}_{I^c_-,I}\cdot\mathcal{R}_{I,I^c}'\right)=0.
\end{equation}
\end{theor}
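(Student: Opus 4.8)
The plan is to reduce the commutativity statement
$[\mathcal{D}_k,\mathcal{D}_l]=0$ to the $R$-matrix identities
(\ref{relRI}) by mimicking the scalar derivation of Proposition
\ref{propscalar}, but keeping careful track of the operator ordering
of the $R$-matrices. First I would exploit the same structural
reductions as in the scalar case: since $\mathcal{D}_N=\mathbf{p}_{\{1,\dots,N\}}$
is central (it shifts all variables uniformly and carries no
nontrivial $R$-matrix factors, by the last example in
Section~\ref{sect23}), and since an analogue of (\ref{D--}) relates
$\mathcal{D}_{k-N}$ to $\mathcal{D}_k\mathcal{D}_{-N}$, it suffices to
prove $[\mathcal{D}_k,\mathcal{D}_{-l}]=0$ for $k,l=1,\dots,N$, where
$\mathcal{D}_{-l}$ is the evident spin analogue of $D_{-l}$ in
(\ref{D-}) built from $\mathbf{p}_I^{-1}$ and the reversed
$R$-matrix products. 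Thus I would begin by writing down
$\mathcal{D}_{-l}=\sum_{|J|=l}\bar{\mathcal{R}}_{J,J^c}'\,\mathbf{p}_J^{-1}\,\bar{\mathcal{R}}_{J^c,J}$ (the mirror of (\ref{Dspin2})) and verifying the $\mathcal{D}_N$-relation directly.

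Next I would expand $[\mathcal{D}_k,\mathcal{D}_{-l}]$ as a double sum
over subsets $I$ with $|I|=k$ and $J$ with $|J|=l$, exactly as in
(\ref{a797}), and introduce the four pairwise disjoint sets
$A=I\setminus J$, $B=J\setminus I$, $C=I\cap J$, $D=(I\cup J)^c$. The
key computational engine is Lemma~\ref{lemXYZ}: the Yang--Baxter
relations (\ref{l21})--(\ref{l22}) are precisely what allow one to
split and recombine the products $\bar{\mathcal{R}}_{I^c,I}$,
$\bar{\mathcal{R}}_{I,I^c}'$ across the union/intersection structure of
$I$ and $J$, playing the role that the multiplicativity
$(I,J\cup K)=(I,J)(I,K)$ and factorizations (\ref{p1})--(\ref{p2})
played in the scalar proof. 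The shift operators $\mathbf{p}_I$ and
$\mathbf{p}_J^{-1}$ act on the $z$-arguments of the $R$-matrices; I
would push them through using the fact that a shift by $\pm\eta$ on
the variables of $I$ turns an $R_{ij}$ into $R^{-}_{ij}$ (or its
inverse-shift partner), which is exactly how the ``$I_-$'' subscripts
and the $R^-$-factors in (\ref{a20})--(\ref{a22}) and (\ref{relRI})
arise. After reordering, I expect the ``spectator'' $R$-matrices —
those depending only on $A$ and $B$ and on the external legs — to
factor out of the $C,D$-summation by means of (\ref{l21})--(\ref{l22})
and the unitarity (\ref{un03}), leaving a common left/right dressing
times a bracketed sum over the ways of splitting $(A\cup B)^c$ into
$C\sqcup D$. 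Setting that dressed coefficient of
$\mathbf{p}_A\mathbf{p}_B^{-1}$ to zero yields (\ref{relRI}), the
noncommutative counterpart of (\ref{CD}).

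The hard part will be the bookkeeping of orderings when the spectator
factors are pulled out: unlike in the scalar case, where all
$\phi$-factors commute and (\ref{CD}) falls out by simply collecting a
scalar prefactor, here the $R$-matrices attached to $A$ and $B$ do not
a priori commute with those attached to $C$ and $D$, so I must show
that the Yang--Baxter moves of Lemma~\ref{lemXYZ} genuinely free the
$A,B$-dependent products from both sides of the $C,D$-block and that
the inner unitarity cancellations (\ref{un01})--(\ref{un03}) leave the
sum over $C\sqcup D=(A\cup B)^c$ cleanly isolated. I would verify the
mechanism explicitly on the $N=2,3$ examples of
Section~\ref{sect23} to fix signs, argument-shifts, and the precise
left-versus-right placement of the surviving $R$ and $R^-$ factors,
then argue the general pattern by the same (\ref{l21})--(\ref{l22})
manipulations. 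Matching the resulting expression term-by-term against
the explicit products $\mathcal{F}^\pm_{i_1,\dots,i_k}(k,N)$ in
(\ref{a21})--(\ref{a22}) — and checking that in the scalar limit
$M=1$ it collapses to (\ref{Idsc})/(\ref{IdenRuij}) — completes the
reduction; the actual proof that the elliptic $R$-matrix satisfies
(\ref{relRI}) is deferred to Theorem~\ref{th2}.
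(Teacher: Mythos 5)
Your proposal follows essentially the same route as the paper's proof: reduce commutativity to $[\mathcal{D}_k,\mathcal{D}_{-l}]=0$, expand over subsets $I,J$ split into $A=I\setminus J$, $B=J\setminus I$, $C=I\cap J$, $D=(I\cup J)^c$, and use Lemma \ref{lemXYZ} together with the unitarity relations (\ref{un01})--(\ref{un03}) to pull the $A,B$-dependent dressing out of both sides, so that vanishing of the coefficient of $\mathbf{p}_A\mathbf{p}_B^{-1}$ becomes the sum over splittings $C\sqcup D=(A\cup B)^c$, i.e.\ (\ref{relRI}). The ``hard bookkeeping part'' you correctly flag is exactly what the paper isolates and proves as Lemma \ref{lemma1} (equation (\ref{lem1})), so your plan is correct and matches the paper's argument.
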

The identities (\ref{relRI}) are written explicitly in (\ref{a20})-(\ref{a22}). Before proving Theorem \ref{th3} we formulate the following technical lemma.
\begin{lemma}\label{lemma1} Let $I$ and $J$ be subsets of $\{1,2,3,\dots,N\}$. The following identity is valid:
\begin{equation}\label{lem1}
\begin{array}{c}
 \bar{\mathcal{R}}_{I^c,I}\cdot\mathbf{p}_{I}\cdot\bar{\mathcal{R}}_{I,I^c}'\cdot\bar{\mathcal{R}}_{J,J^c}\cdot\mathbf{p}_J^{-1}\cdot\bar{\mathcal{R}}'_{J^c,J}=
 \\ \ \\
 =\bar{\mathcal{R}}_{B\cup C\cup D,A}\cdot\bar{\mathcal{R}}_{B,C\cup D}\cdot\mathbf{p}_{A}\cdot\left(\bar{\mathcal{R}}_{D,C}\cdot\bar{\mathcal{R}}_{C_-,D}'\cdot\bar{\mathcal{R}}_{C_-,D}\cdot\bar{\mathcal{R}}'_{D,C}\right)\cdot \mathbf{p}_{B}^{-1}\cdot\bar{\mathcal{R}}_{C\cup D,B}'\cdot\bar{\mathcal{R}}_{A,B\cup C\cup D}'\,,
\end{array}
\end{equation}
where in the r.h.s. the notations $C=I\cap J$, $A=I\setminus C$, $B=J\setminus C$, $D=(I\cup J)^c$ are used.
\end{lemma}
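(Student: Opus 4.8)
The plan is to lift the scalar computation of Proposition~\ref{propscalar} --- the passage from (\ref{a797}) to (\ref{Dcomres}) --- to the $R$-matrix level, replacing the commuting scalar factors $(I,J)$ by the ordered products $\bar{\mathcal{R}}_{I,J},\bar{\mathcal{R}}'_{I,J}$ and controlling their non-commutativity by the Yang--Baxter factorizations (\ref{l21})--(\ref{l22}) of Lemma~\ref{lemXYZ}. Three tools are used: those factorizations; the unitarity relation (\ref{un03}), $\bar{\mathcal{R}}_{X,Y}\bar{\mathcal{R}}'_{Y,X}={\rm Id}$; and the rule for commuting a shift operator through an $R$-matrix, namely $\mathbf{p}_I\,\bar{R}_{ij}(z_i-z_j)=\bar{R}_{ij}(z_i-z_j-\eta\chi_I(i)+\eta\chi_I(j))\,\mathbf{p}_I$ with $\chi_I$ the indicator of $I$, which follows at once from (\ref{p_i}) and the fact that $\bar{R}_{ij}$ depends on $z_i-z_j$ only. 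Commuting shifts through $R$-matrices whose two indices split between $C$ and $D$ is precisely what produces the shifted blocks $\bar{\mathcal{R}}_{C_-,D},\bar{\mathcal{R}}'_{C_-,D}$ on the right-hand side of (\ref{lem1}), just as the shift produced $(C_-,D)$ in the scalar identity.

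First I would write $I=A\cup C$, $I^c=B\cup D$, $J=B\cup C$, $J^c=A\cup D$ (disjointly), so that $\mathbf{p}_I=\mathbf{p}_A\mathbf{p}_C$ and $\mathbf{p}_J^{-1}=\mathbf{p}_B^{-1}\mathbf{p}_C^{-1}$, and decompose each of the four composite products $\bar{\mathcal{R}}_{I^c,I}$, $\bar{\mathcal{R}}'_{I,I^c}$, $\bar{\mathcal{R}}_{J,J^c}$, $\bar{\mathcal{R}}'_{J^c,J}$ into blocks indexed by ordered pairs of the parts of the partition $A,B,C,D$. The identities (\ref{l21})--(\ref{l22}) are then applied to peel the purely $A$-dependent factors $\bar{\mathcal{R}}_{B\cup C\cup D,A}$ and $\bar{\mathcal{R}}'_{A,B\cup C\cup D}$ out to the extreme left and right, and the $B$-dependent factors $\bar{\mathcal{R}}_{B,C\cup D},\bar{\mathcal{R}}'_{C\cup D,B}$ to the positions flanking the shift operators, leaving a $C$--$D$ core in the middle. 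Throughout, the commutativity (\ref{QYB3}) of blocks acting on disjoint tensor slots is used to move pieces past one another.

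Next I would push $\mathbf{p}_A$ and $\mathbf{p}_B^{-1}$ outward to their final positions and $\mathbf{p}_C,\mathbf{p}_C^{-1}$ inward toward the core, applying the shift rule at each crossing; the $C$-shifts convert the unshifted $C$--$D$ blocks of $\bar{\mathcal{R}}_{J,J^c}=\bar{\mathcal{R}}_{B\cup C,A\cup D}$ into the minus-shifted blocks $\bar{\mathcal{R}}_{C_-,D},\bar{\mathcal{R}}'_{C_-,D}$ appearing in the core. Finally, I would invoke the unitarity relation (\ref{un03}) on the cross-blocks that occur with higher multiplicity on the left than on the right: after the regrouping these are brought into adjacent $\bar{\mathcal{R}}_{X,Y}\bar{\mathcal{R}}'_{Y,X}$ position and collapse to ${\rm Id}$. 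What survives is exactly the ordered product on the right-hand side of (\ref{lem1}), with the $C$--$D$ core $\bar{\mathcal{R}}_{D,C}\bar{\mathcal{R}}'_{C_-,D}\bar{\mathcal{R}}_{C_-,D}\bar{\mathcal{R}}'_{D,C}$.

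The main obstacle is the ordering bookkeeping: unlike the scalar case, the blocks $\bar{\mathcal{R}}_{X,Y}$ do not commute, so one must verify at each step that (\ref{l21})--(\ref{l22}) are applied with precisely the correct triples of disjoint sets, and that the duplicate blocks to be removed by (\ref{un03}) are genuinely adjacent and carry matching transposed-index, opposite-primed arguments. A secondary point requiring care is that every shift crossing is accounted for, so that the shift $C_-$ lands on exactly the core blocks shown in (\ref{lem1}) and no spurious shift contaminates the surviving $A$- and $B$-blocks. This is the step where the spin computation genuinely diverges from the transparent scalar rearrangement (\ref{a797})--(\ref{Dcomres}).
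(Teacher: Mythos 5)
Your proposal is correct and follows essentially the same route as the paper's proof: the paper likewise rewrites the left-hand side over the partition $A,B,C,D$ (with $I=A\cup C$, $I^c=B\cup D$, $J=B\cup C$, $J^c=A\cup D$), applies the factorizations (\ref{l21})--(\ref{l22}) of Lemma \ref{lemXYZ} after inserting identity factors such as $\bar{\mathcal{R}}_{D,A}\bar{\mathcal{R}}_{C,B}\bar{\mathcal{R}}'_{A,D}\bar{\mathcal{R}}'_{B,C}$, cancels the resulting adjacent transposed pairs by unitarity (\ref{un03}), and commutes $\mathbf{p}_C$ through the $C$--$D$ core so that it annihilates $\mathbf{p}_C^{-1}$, leaving exactly the shifted blocks $\bar{\mathcal{R}}'_{C_-,D}\bar{\mathcal{R}}_{C_-,D}$. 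The only inessential imprecision is attributing both shifted core blocks to $\bar{\mathcal{R}}_{J,J^c}$: the primed one, $\bar{\mathcal{R}}'_{C_-,D}$, actually arises from $\bar{\mathcal{R}}'_{I,I^c}$, but since both blocks sit between $\mathbf{p}_I$ and $\mathbf{p}_J^{-1}$ the shifting mechanism you describe is unaffected.
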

\begin{proof}
The proof is based on the Yang-Baxter equation for the $R$-matrices, Lemma \ref{lemXYZ}, and on the unitarity property (\ref{un03}).
First, let us rewrite the l.h.s of (\ref{lem1}) in terms of the subsets $A,B,C,D$:
\begin{equation}\label{q301}
\begin{array}{c}
 \bar{\mathcal{R}}_{I^c,I}\cdot\mathbf{p}_{I}\cdot\bar{\mathcal{R}}_{I,I^c}'\cdot\bar{\mathcal{R}}_{J,J^c}\cdot\mathbf{p}_J^{-1}\cdot\bar{\mathcal{R}}'_{J^c,J}=
 \\ \ \\
 =\bar{\mathcal{R}}_{B\cup D,A\cup C}\cdot\mathbf{p}_{A}\cdot\mathbf{p}_{C}\cdot\bar{\mathcal{R}}'_{A\cup C,B\cup D}\cdot\bar{\mathcal{R}}_{B\cup C,A\cup D}\cdot\mathbf{p}_{B}^{-1}\cdot\mathbf{p}_{C}^{-1}\cdot\bar{\mathcal{R}}'_{A\cup D,B\cup C}\,.
 \end{array}
\end{equation}
The next step is to transform the middle $R$-matrix items in the r.h.s. of (\ref{q301}).
Consider $\bar{\mathcal{R}}_{B\cup C,A\cup D}$ and insert an identity operator  ${\rm Id}=\bar{\mathcal{R}}_{D,A}\cdot\bar{\mathcal{R}}_{C,B}\cdot\bar{\mathcal{R}}'_{A,D}\cdot\bar{\mathcal{R}}'_{B,C}$ to the right. Then,  using (\ref{l21}) transform it as
\begin{equation}\label{q302}
 \begin{array}{c}
\bar{\mathcal{R}}_{B\cup C,A\cup D}=\bar{\mathcal{R}}_{B\cup C,A\cup D}\cdot\bar{\mathcal{R}}_{D,A}\cdot\bar{\mathcal{R}}_{C,B}\cdot\bar{\mathcal{R}}'_{A,D}\cdot\bar{\mathcal{R}}'_{B,C}=
\\ \ \\
=\bar{\mathcal{R}}_{B\cup C\cup D,A}\cdot\bar{\mathcal{R}}_{B\cup C,D}\cdot\bar{\mathcal{R}}_{C,B}\cdot\bar{\mathcal{R}}'_{A,D}\cdot\bar{\mathcal{R}}'_{B,C}
=\bar{\mathcal{R}}_{B\cup C\cup D,A}\cdot\bar{\mathcal{R}}_{B,D\cup C}\cdot\bar{\mathcal{R}}_{C,D}\cdot\bar{\mathcal{R}}'_{A,D}\cdot\bar{\mathcal{R}}'_{B,C}\,.
\end{array}
\end{equation}
In the same way using  (\ref{l22}) one obtains
\begin{equation}\label{q303}
 \begin{array}{c}
\bar{\mathcal{R}}'_{A\cup C,B\cup D}=\bar{\mathcal{R}}_{C,A}\cdot\bar{\mathcal{R}}_{B,D}\cdot\bar{\mathcal{R}}'_{D,B}\cdot\bar{\mathcal{R}}'_{A,C}\cdot\bar{\mathcal{R}}'_{A\cup C,B\cup D}=
\\ \ \\
=\bar{\mathcal{R}}_{C,A}\cdot\bar{\mathcal{R}}_{B,D}\cdot\bar{\mathcal{R}}'_{D,B}\cdot\bar{\mathcal{R}}'_{C,B\cup D}\cdot\bar{\mathcal{R}}'_{A,C\cup B\cup D}
=\bar{\mathcal{R}}_{C,A}\cdot\bar{\mathcal{R}}_{B,D}\cdot\bar{\mathcal{R}}'_{C,D}\cdot\bar{\mathcal{R}}'_{C\cup D,B}\cdot\bar{\mathcal{R}}'_{A,C\cup B\cup D}\,.
\end{array}
\end{equation}
Multiply the r.h.s. of (\ref{q303}) and the r.h.s. of (\ref{q302}). The factors  $\bar{\mathcal{R}}'_{C\cup D,B}\cdot\bar{\mathcal{R}}'_{A,C\cup B\cup D}$ and $\bar{\mathcal{R}}_{B\cup C\cup D,A}\cdot\bar{\mathcal{R}}_{B,D\cup C}$ are reduced due to the unitarity property (\ref{un03}), so that the answer is
\begin{equation}\label{q304}
 \begin{array}{c}
\bar{\mathcal{R}}'_{A\cup C,B\cup D}\cdot\bar{\mathcal{R}}_{B\cup C,A\cup D}
=\bar{\mathcal{R}}_{C,A}\cdot\bar{\mathcal{R}}_{B,D}\cdot\bar{\mathcal{R}}'_{C,D}\cdot\bar{\mathcal{R}}_{C,D}\cdot\bar{\mathcal{R}}'_{A,D}\cdot\bar{\mathcal{R}}'_{B,C}\,.
\end{array}
\end{equation}
Plugging (\ref{q304}) into the r.h.s. of (\ref{q301}) one obtains:
\beq\label{q305}
\begin{array}{c}
\bar{\mathcal{R}}_{B\cup D,A\cup C}\cdot\mathbf{p}_{A}\cdot\mathbf{p}_{C}\cdot\bar{\mathcal{R}}'_{A\cup C,B\cup D}\cdot\bar{\mathcal{R}}_{B\cup C,A\cup D}\cdot\mathbf{p}_{B}^{-1}\cdot\mathbf{p}_{C}^{-1}\cdot\bar{\mathcal{R}}'_{A\cup D,B\cup C}=
\\ \ \\
=\bar{\mathcal{R}}_{B\cup D,A\cup C}\cdot\mathbf{p}_{A}\cdot\mathbf{p}_{C}\cdot\bar{\mathcal{R}}_{C,A}\cdot\bar{\mathcal{R}}_{B,D}\cdot\bar{\mathcal{R}}'_{C,D}\cdot\bar{\mathcal{R}}_{C,D}\cdot\bar{\mathcal{R}}'_{A,D}\cdot\bar{\mathcal{R}}'_{B,C}\cdot\mathbf{p}_{B}^{-1}\cdot\mathbf{p}_{C}^{-1}\cdot\bar{\mathcal{R}}'_{A\cup D,B\cup C}=
\\ \ \\
=\bar{\mathcal{R}}_{B\cup D,A\cup C}\cdot\bar{\mathcal{R}}_{C,A}\cdot\bar{\mathcal{R}}_{B,D}\cdot\mathbf{p}_{A}\cdot\mathbf{p}_{C}\cdot\bar{\mathcal{R}}'_{C,D}\cdot\bar{\mathcal{R}}_{C,D}\cdot\mathbf{p}_{B}^{-1}\cdot\mathbf{p}_{C}^{-1}\cdot\bar{\mathcal{R}}'_{A,D}\cdot\bar{\mathcal{R}}'_{B,C}\cdot\bar{\mathcal{R}}'_{A\cup D,B\cup C}=
\\ \ \\
=\bar{\mathcal{R}}_{B\cup D,A\cup C}\cdot\bar{\mathcal{R}}_{C,A}\cdot\bar{\mathcal{R}}_{B,D}\cdot\mathbf{p}_{A}\cdot\bar{\mathcal{R}}'_{C_-,D}\cdot\bar{\mathcal{R}}_{C_-,D}\cdot\mathbf{p}_{B}^{-1}\cdot\bar{\mathcal{R}}'_{A,D}\cdot\bar{\mathcal{R}}'_{B,C}\cdot\bar{\mathcal{R}}'_{A\cup D,B\cup C}\,.
\end{array}
\eq
In the second equality of (\ref{q305}) we use that $\mathbf{p}_{A}\cdot\mathbf{p}_{C}$ commutes with $\bar{\mathcal{R}}_{C,A}$ and $\bar{\mathcal{R}}_{B,D}$ since the first one depends on differences $z_a-z_c$, where $a\in A\, ,c\in C$, and the second does not depend on subsets $A$ and $C$. Similarly, $\mathbf{p}_{B}^{-1}\cdot\mathbf{p}_{C}^{-1}$ commutes with $\bar{\mathcal{R}}'_{A,D}$ and $\bar{\mathcal{R}}'_{B,C}$. In the last equality we move $\mathbf{p}_{C}$ to the right to reduce it with $\mathbf{p}_{C}^{-1}$.

Finally, to prove (\ref{lem1}) one should substitute the following consequences of Lemma \ref{lemXYZ}:
\beq\label{q306}
\bar{\mathcal{R}}_{B\cup D,A\cup C}\cdot\bar{\mathcal{R}}_{C,A}\cdot\bar{\mathcal{R}}_{B,D}=\bar{\mathcal{R}}_{B\cup C\cup D,A}\cdot\bar{\mathcal{R}}_{B\cup D,C}\cdot\bar{\mathcal{R}}_{B,D}=\bar{\mathcal{R}}_{B\cup C\cup D,A}\cdot\bar{\mathcal{R}}_{B,C\cup D}\cdot\bar{\mathcal{R}}_{D,C}
\eq
\beq\label{q307}
\bar{\mathcal{R}}'_{B,C}\cdot\bar{\mathcal{R}}'_{A,D}\cdot\bar{\mathcal{R}}'_{A\cup D,B\cup C}=\bar{\mathcal{R}}'_{B,C}\cdot\bar{\mathcal{R}}'_{D,B\cup C}\cdot\bar{\mathcal{R}}'_{A,B\cup C\cup D}=\bar{\mathcal{R}}'_{D,C}\cdot\bar{\mathcal{R}}'_{C\cup D,B}\cdot\bar{\mathcal{R}}'_{A,B\cup C\cup D}
\eq
into (\ref{q305}) and use the commutativity of $\mathbf{p}_{A}$ with $\bar{\mathcal{R}}_{D,C}$ and $\mathbf{p}_{B}^{-1}$ with $\bar{\mathcal{R}}'_{D,C}$.
\end{proof}
\\
Besides (\ref{Dspin2}) introduce also:
\beq\label{q308}
\mathcal{D}_{-k}=\sum_{|I|=k}(I,I^c)\cdot \mathcal{\bar{R}}_{I,I^c}\cdot \mathbf{p}_{I}^{-1}\cdot\bar{\mathcal{R}}_{I^c,I}'\, .
\eq
\\
\textbf{Proof of Theorem \ref{th3}.}
Notice that  (\ref{relRI}) is equivalent to the identity:
\begin{equation}\label{relRIun}
\begin{array}{c}
{\displaystyle
\sum_{|I|=k}\left((I^c,I)(I_-,I^c)\cdot\mathcal{\bar{R}}_{I^c,I}\cdot\mathcal{\bar{R}}_{I_-,I^c}'\cdot\mathcal{\bar{R}}_{I_-,I^c}\cdot\mathcal{\bar{R}}_{I^c,I}'\right.}
\\ \ \\
\left.-(I,I^c)(I^c_-,I)\cdot\mathcal{\bar{R}}_{I,I^c}\cdot\mathcal{\bar{R}}_{I^c_-,I}'\cdot\mathcal{\bar{R}}_{I^c_-,I}\cdot\mathcal{\bar{R}}_{I,I^c}'\right)=0\,.
\end{array}
\end{equation}
Indeed, following (\ref{un03}) and the notations (\ref{IJ}), (\ref{RIJsh1}) and (\ref{RIJsh2}) we have, for example
\beq\label{q310}
\begin{array}{c}
\mathcal{R'}_{I_-,I^c}\cdot\mathcal{R}_{I_-,I^c}=
\\ \ \\
=\prod\limits_{\substack{i\in I}} \prod\limits_{\substack{j\in I^c\\j<i}}\phi(z_i-z_j-\eta)\cdot\bar{\mathcal{R'}}_{I_-,I^c}\prod\limits_{\substack{i\in I}} \prod\limits_{\substack{j\in I^c\\j>i}}\phi(z_i-z_j-\eta)\cdot\bar{\mathcal{R}}_{I_-,I^c}=
\\ \ \\
=(I_-,I^c)\cdot\bar{\mathcal{R'}}_{I_-,I^c}\cdot\bar{\mathcal{R}}_{I_-,I^c}\, .
\end{array}
\eq
The idea of the proof is the same as in Proposition \ref{propscalar}, i.e. it is sufficient to prove that
$\mathcal{D}_k$ and $\mathcal{D}_{-l }$ commute for $ k,l=1,\dots ,N$. We transform the commutator $[\mathcal{D}_k,\mathcal{D}_{-l}]$ using the pairwise disjoint sets and  consider separately the items with $R$-matrices and the scalar items of the form (\ref{IJ}). Then  the identities (\ref{relRIun}) follow from Lemma \ref{lemma1} and the result (\ref{Dcomres}) from the previous subsection.

Let us write the commutator of $\mathcal{D}_k$ and $\mathcal{D}_{-l }$ using the definitions (\ref{Dspin2}) and (\ref{q308}):
\begin{equation}\label{q311}
\begin{array}{c}
 {\displaystyle
 [\mathcal{D}_k,\mathcal{D}_{-l}]=\sum\limits_{\substack{|I|=k\\|J|=l}}\left((I^c,I)\cdot\mathcal{\bar{R}}_{I^c,I}\cdot\mathbf{p}_{I}\cdot\mathcal{\bar{R}}_{I,I^c}'(J,J^c) \cdot\mathcal{\bar{R}}_{J,J^c}\cdot\mathbf{p}_{J}^{-1}\cdot\mathcal{\bar{R}}_{J^c,J}'\right.-}
 \\ \ \\
 \left.-(J,J^c) \cdot\mathcal{\bar{R}}_{J,J^c}\cdot\mathbf{p}_{J}^{-1}\cdot\mathcal{\bar{R}}_{J^c,J}'\cdot(I^c,I)\cdot\mathcal{\bar{R}}_{I^c,I}\mathbf{p}_{I}\cdot\mathcal{\bar{R}}_{I,I^c}'\right)\,.
 \end{array}
 \end{equation}
Introduce the pairwise disjoint sets
\beq
A=I\setminus J,\quad B=J\setminus I, \quad C=I\cap J, \quad D=(I\cup J)^c\,.
\eq
Apply (\ref{lem1})  to each of $R$-matrix expressions from  (\ref{q311}):
 \begin{equation}\label{q312}
 \begin{split}
 \mathcal{\bar{R}}_{I^c,I}\cdot\mathbf{p}_{I}\cdot\mathcal{\bar{R}}_{I,I^c}'\cdot\mathcal{\bar{R}}_{J,J^c}\cdot\mathbf{p}_{J}^{-1}\cdot\mathcal{\bar{R}}_{J^c,J}'=\\
 =\mathcal{\bar{R}}_{B\cup C\cup D,A}\cdot\mathcal{\bar{R}}_{ B,C\cup D}\cdot\mathbf{p}_{A}\left(\mathcal{\bar{R}}_{D,C}\cdot\mathcal{\bar{R}}_{C_-,D}'\cdot\mathcal{\bar{R}}_{C_-,D}\cdot\mathcal{\bar{R}}'_{D,C}\right)\mathbf{p}_{B}^{-1}\cdot\mathcal{\bar{R}}_{C\cup D,B}'\cdot\mathcal{\bar{R}}_{A,B\cup C\cup D}'
 \end{split}
 \end{equation}
  \begin{equation}\label{q313}
 \begin{split}
\mathcal{\bar{R}}_{J,J^c}\cdot\mathbf{p}_{J}^{-1}\cdot\mathcal{\bar{R}}_{J^c,J}'\cdot\mathcal{\bar{R}}_{I^c,I}\cdot\mathbf{p}_{I}\cdot\mathcal{\bar{R}}_{I,I^c}'=\\
 =\mathcal{\bar{R}}_{B\cup C\cup D,A}\cdot\mathcal{\bar{R}}_{ B,C\cup D}\cdot\mathbf{p}_{B}^{-1}\left(\mathcal{\bar{R}}_{C,D}\cdot\mathcal{\bar{R}}_{D_-,C}'\cdot\mathcal{\bar{R}}_{D_-,C}\cdot\mathcal{\bar{R}}'_{C,D}\right)\mathbf{p}_{A}\cdot\mathcal{\bar{R}}_{C\cup D,B}'\cdot\mathcal{\bar{R}}_{A,B\cup C\cup D}'\,.
 \end{split}
 \end{equation}
 Taking also into account (\ref{q312}), (\ref{q313}), (\ref{Dcomres}) we rewrite the r.h.s. of (\ref{q311}) in the form
 \begin{equation}\label{q314}
 \begin{array}{c}
 [\mathcal{D}_k,\mathcal{D}_{-l}]
 =(B\cup C\cup D,A)(B,C\cup D)(B,A_-)\cdot\mathcal{\bar{R}}_{B\cup C\cup D,A}\cdot\mathcal{\bar{R}}_{ B,C\cup D}\cdot\mathbf{p}_{A}\times
 \\ \ \\
 \times \Big((D,C)(C_-,D)\cdot\mathcal{\bar{R}}_{D,C}\cdot\mathcal{\bar{R}}_{C_-,D}'\cdot\mathcal{\bar{R}}_{C_-,D}\cdot\mathcal{\bar{R}}'_{D,C}-
 \hspace{6cm}\
  \\ \ \\
 \hspace{6cm}\ -(C,D)(D_-,C)\cdot\mathcal{\bar{R}}_{C,D}\cdot\mathcal{\bar{R}}_{D_-,C}'\cdot\mathcal{\bar{R}}_{D_-,C}\cdot\mathcal{\bar{R}}'_{C,D}\Big)\times
 \\ \ \\
 \times\mathbf{p}_{B}^{-1}\cdot\mathcal{\bar{R}}_{C\cup D,B}'\cdot\mathcal{\bar{R}}_{A,B\cup C\cup D}'\,.
 \end{array}
 \end{equation}
 The first and the last line of (\ref{q314}) depends on the sets $A$ and $B$ only, and does not depend on how  $C\cup D=(A\cup B)^c$ is divided into two sets $C$ and $D$.
 Thus, the coefficient at $\mathbf{p}_A\mathbf{p}_{B}^{-1}$ vanishes if and only if
 \beq\label{q315}
 \begin{array}{ccc}
 {\displaystyle
 \sum\limits_{\substack{C,D\\ C\cup D = (A\cup B)^c}}}\!&
 \!\left((D,C)(C_-,D)\cdot\mathcal{\bar{R}}_{D,C}\cdot\mathcal{\bar{R}}_{C_-,D}'\cdot\mathcal{\bar{R}}_{C_-,D}\cdot\mathcal{\bar{R}}'_{D,C}\right.-&
\\
 &-\left.(C,D)(D_-,C)\cdot\mathcal{\bar{R}}_{C,D}\cdot\mathcal{\bar{R}}_{D_-,C}'\cdot\mathcal{\bar{R}}_{D_-,C}\cdot\mathcal{\bar{R}}'_{C,D}\right)
 &=0\,,
 \end{array}
 \eq
 where the sum is over all disjoint sets $C$ and $D$ such that $C\cup D=(A\cup B)^c$ and $|C|=k-|A|,\ |D|=N-k-|B|$. The identity (\ref{q315}) is equivalent to (\ref{relRIun}). This finishes the proof.
\hfill$\scriptstyle\blacksquare$

Let us write down several examples of identities. The non-trivial identity for $k=2$ appears beginning with $N=5$.

\paragraph{Example. $k=1$ $N=3$:}
\begin{equation}\label{rel13}
 \begin{split}
 R_{12}^h(z_{1}-z_{2})R_{13}^h(z_{1}-z_{3})R_{31}^h(z_{3}-z_{1}-\eta)R_{21}^h(z_{2}-z_{1}-\eta)\\-R_{12}^h(z_{1}-z_{2}-\eta)R_{13}^h(z_{1}-z_{3}-\eta)R_{31}^h(z_{3}-z_{1})R_{21}^h(z_{2}-z_{1})\\+
 R_{23}^h(z_{2}-z_{3})R_{32}^h(z_{3}-z_{2}-\eta)R_{12}^h(z_{1}-z_{2}-\eta)R_{21}^h(z_{2}-z_{1})\\-R_{12}^h(z_{1}-z_{2})R_{21}^h(z_{2}-z_{1}-\eta)R_{23}^h(z_{2}-z_{3}-\eta)R_{32}^h(z_{3}-z_{2})\\+
 R_{23}^h(z_{2}-z_{3}-\eta)R_{13}^h(z_{1}-z_{3}-\eta)R_{31}^h(z_{3}-z_{1})R_{32}^h(z_{3}-z_{2})\\-R_{23}^h(z_{2}-z_{3})R_{13}^h(z_{1}-z_{3})R_{31}^h(z_{3}-z_{1}-\eta)R_{32}^h(z_{3}-z_{2}-\eta)=0
 \end{split}
\end{equation}
\paragraph{Example. $k=1$ $N=4$:}
   \begin{equation}\label{rel14}
       \begin{split}
       R_{12}^h(z_{1}-z_{2}-\eta)R_{13}^h(z_{1}-z_{3}-\eta)  R_{14}^h(z_{1}-z_{4}-\eta) R_{41}^h(z_{4}-z_{1})  R_{31}^h(z_{3}-z_{1})  R_{21}^h(z_{2}-z_{1})\\-  R_{12}^h(z_{1}-z_{2})
  R_{13}^h(z_{1}-z_{3})  R_{14}^h(z_{1}-z_{4})  R_{41}^h(z_{4}-z_{1}-\eta)
   R_{31}^h(z_{3}-z_{1}-\eta)  R_{21}^h(z_{2}-z_{1}-\eta)  \\ +
      R_{12}^h(z_{1}-z_{2})
  R_{21}^h(z_{2}-z_{1}-\eta)  R_{23}^h(z_{2}-z_{3}-\eta)
  R_{24}^h(z_{2}-z_{4}-\eta)  R_{42}^h(z_{4}-z_{2})  R_{32}^h(z_{3}-z_{2})\\-
R_{23}^h(z_{2}-z_{3})   R_{24}^h(z_{2}-z_{4})  R_{42}^h(z_{4}-z_{2}-\eta)
  R_{32}^h(z_{3}-z_{2}-\eta)  R_{12}^h(z_{1}-z_{2}-\eta)
  R_{21}^h(z_{2}-z_{1}) \\+
      R_{23}^h(z_{2}-z_{3}) R_{13}^h(z_{1}-z_{3})  R_{31}^h(z_{3}-z_{1}-\eta)
  R_{32}^h(z_{3}-z_{2}-\eta)  R_{34}^h(z_{3}-z_{4}-\eta)
  R_{43}^h(z_{4}-z_{3}) \\ -
      R_{34}^h(z_{3}-z_{4})
  R_{43}^h(z_{4}-z_{3}-\eta)  R_{23}^h(z_{2}-z_{3}-\eta)
  R_{13}^h(z_{1}-z_{3}-\eta)  R_{31}^h(z_{3}-z_{1})  R_{32}^h(z_{3}-z_{2})\\+
      R_{34}^h(z_{3}-z_{4})
  R_{24}^h(z_{2}-z_{4})  R_{14}^h(z_{1}-z_{4})  R_{41}^h(z_{4}-z_{1}-\eta)
   R_{42}^h(z_{4}-z_{2}-\eta)  R_{43}^h(z_{4}-z_{3}-\eta)\\-
      R_{34}^h(z_{3}-z_{4}-\eta)
  R_{24}^h(z_{2}-z_{4}-\eta)  R_{14}^h(z_{1}-z_{4}-\eta)
  R_{41}^h(z_{4}-z_{1})  R_{42}^h(z_{4}-z_{2})  R_{43}^h(z_{4}-z_{3}) =0
\end{split}
\end{equation}
\paragraph{Example. $k=2$ $N=5$:}
$$
\begin{array}{c}
   R_{23}^-       R_{24}^-  R_{25}^-R_{13}^-   R_{14}^-       R_{15}^-  R_{51}  R_{41} R_{31}  R_{52}  R_{42}     R_{32}    - R_{23}     R_{24} R_{25} R_{13}   R_{14}      R_{15}  R_{51}^-     R_{41}^-  R_{31}^-     R_{52}^-  R_{42}^-     R_{32}^- \\
  +   R_{23}     R_{32}^-  R_{34}^- R_{35}^-  R_{12}^-      R_{14}^-   R_{15}^-     R_{51}  R_{41}  R_{21}     R_{53}  R_{43}     - R_{34} R_{35}    R_{12}    R_{14}     R_{15}  R_{51}^-     R_{41}^-  R_{21}^-     R_{53}^-  R_{43}^-     R_{23}^-  R_{32} \\
   +R_{34}     R_{24}  R_{42}^- R_{43}^-  R_{45}^-     R_{12}^-    R_{13}^-    R_{15}^-  R_{51}  R_{31}     R_{21}  R_{54}     -   R_{45}R_{12}     R_{13}  R_{15}     R_{51}^-  R_{31}^-     R_{21}^-  R_{54}^-     R_{34}^-  R_{24}^-     R_{42}  R_{43} \\
 +R_{45}     R_{35}  R_{25}  R_{52}^-    R_{53}^- R_{54}^-    R_{12}^-    R_{13}^-      R_{14}^-  R_{41}  R_{31}     R_{21}   - R_{12}     R_{13}  R_{14}  R_{41}^-     R_{31}^-  R_{21}^-     R_{45}^-  R_{35}^-     R_{25}^-  R_{52}  R_{53}     R_{54} \\
  + R_{12}     R_{13}  R_{31}^-     R_{21}^-  R_{34}^-  R_{35}^-     R_{24}^-     R_{25}^-  R_{52}  R_{42}     R_{53}  R_{43}     -R_{34}  R_{35}    R_{24}   R_{25}     R_{52}^-  R_{42}^-    R_{53}^-     R_{43}^-   R_{12}^-  R_{13}^-     R_{31}  R_{21} \\
  + R_{12}     R_{34}  R_{14}  R_{41}^-  R_{43}^-      R_{21}^-  R_{45}^-    R_{23}^-     R_{25}^-  R_{52}  R_{32}     R_{54}   - R_{45} R_{23}      R_{25}  R_{52}^-     R_{32}^-   R_{54}^-  R_{12}^-    R_{34}^-     R_{14}^-  R_{41}     R_{43}R_{21} \\
   + R_{12}     R_{45}  R_{35}  R_{15}     R_{51}^-     R_{53}^-   R_{54}^- R_{21}^-   R_{23}^-    R_{24}^-     R_{42}  R_{32}     - R_{23}     R_{24}  R_{42}^-     R_{32}^-  R_{12}^-     R_{45}^-  R_{35}^-     R_{15}^-  R_{51} R_{53}  R_{54} R_{21} \\
  + R_{23}     R_{13}  R_{24}  R_{14}     R_{41}^-   R_{42}^-   R_{31}^-   R_{32}^-     R_{45}^-  R_{35}^-     R_{53}  R_{54}   -  R_{45}     R_{35}  R_{53}^-   R_{54}^-     R_{23}^-  R_{13}^-    R_{24}^-     R_{14}^-  R_{41}R_{42}   R_{31}      R_{32} \\
  +     R_{23}     R_{13}  R_{45}  R_{25}     R_{15}  R_{51}^-    R_{52}^-  R_{54}^-    R_{31}^-    R_{32}^-     R_{34}^-  R_{43}   -  R_{34}     R_{43}^-  R_{23}^-     R_{13}^-  R_{45}^-     R_{25}^-  R_{15}^-     R_{51}  R_{52}R_{54} R_{31}      R_{32}   \\
  +R_{34}     R_{24}  R_{14}  R_{35}     R_{25}  R_{15}  R_{51}^-  R_{52}^-   R_{53}^-      R_{41}^-    R_{42}^-    R_{43}^-  - R_{34}^-     R_{24}^-  R_{14}^-     R_{35}^-  R_{25}^-     R_{15}^-  R_{51}     R_{52}  R_{53}   R_{41}  R_{42}    R_{43} =
  \end{array}
  $$
  \begin{equation}\label{rel25}
  =0
   \end{equation}
\section{Proof of $R$-matrix identities}\label{sect4}
\setcounter{equation}{0}

\subsection{Proof for $k=1$ case through AYBE}\label{sect41}
Our purpose is to prove the following identity for any $N\in\mZ_+$:
 \beq\label{a1}
  \begin{array}{c}
  \displaystyle{
 \sum\limits_{k=1}^N\overrightarrow{\prod\limits_{i=k+1}^N} R_{ki}^\hbar(z_k-z_i)
 \overleftarrow{\prod\limits_{j:j\neq k}^N} R_{jk}^\hbar(z_j-z_k-\eta)
 \overrightarrow{\prod\limits_{l=1}^{k-1}} R_{kl}^\hbar(z_k-z_l)
 -
 }
 \\ \ \\
  \displaystyle{
 -\sum\limits_{k=1}^N\overleftarrow{\prod\limits_{l=1}^{k-1}} R_{lk}^\hbar(z_l-z_k)
 \overrightarrow{\prod\limits_{j:j\neq k}^N} R_{kj}^\hbar(z_k-z_j-\eta)
 \overleftarrow{\prod\limits_{i=k+1}^N} R_{ik}^\hbar(z_i-z_k)=0
 \,,
 }
 \end{array}
 \eq
or equivalently,
 \beq\label{a2}
  \begin{array}{c}
  \displaystyle{
 \sum\limits_{k=1}^N R_{k,k+1}^\hbar\dots R_{k,N}^\hbar
 \cdot R_{N,k}^{\hbar,-}\dots R_{k+1,k}^{\hbar,-}R_{k-1,k}^{\hbar,-}\dots R_{1,k}^{\hbar,-}
 \cdot R^\hbar_{k,1}\dots R^\hbar_{k,k-1}
 -
 }
 \\ \ \\
  \displaystyle{
 -\sum\limits_{k=1}^N R_{k-1,k}^\hbar\dots R_{1,k}^\hbar
 \cdot R_{k,1}^{\hbar,-}\dots R_{k,k-1}^{\hbar,-}R_{k,k+1}^{\hbar,-}\dots R_{k,N}^{\hbar,-}
 \cdot R^\hbar_{N,k}\dots R^\hbar_{k+1,k}=0\,,
 }
 \end{array}
 \eq
where notations $R^\hbar_{ij}=R^\hbar_{ij}(z_i-z_j)$ and $R^{\hbar,-}_{ij}=R^\hbar_{ij}(z_i-z_j-\eta)$ are used for shortness. The multiplication points $\cdot$ between products are just for better visibility.

\subsubsection*{\underline{Addition formula of higher order.}}
For the proof of (\ref{a1}) we are going to use the following higher order addition formula (see (\ref{e21}) in the Appendix):
  \beq\label{a3}
  \begin{array}{l}
  \displaystyle{
R_{a,1}^{y_1}(x_1)R_{a,2}^{y_2}(x_2)\dots R_{a,N}^{y_N}(x_N)=
 }
 \\ \ \\
   \displaystyle{
 =R_{a,N}^{Y}(x_N)\cdot R^{y_1}_{N,1}(x_1-x_N)R^{y_2}_{N,2}(x_2-x_{N})\dots R^{y_{N-1}}_{N,N-1}(x_{N-1}-x_N)
 }
  \\ \ \\
   \displaystyle{
 +R_{N-1,N}^{y_N}(x_N-x_{N-1})\cdot R_{a,N-1}^Y\cdot(x_{N-1})R^{y_1}_{N-1,1}(x_1-x_{N-1})
 \dots R^{y_{N-2}}_{N-1,N-2}(x_{N-2}-x_{N-1})
 }
  \\ \ \\
   \displaystyle{
 +R_{N-2,N-1}^{y_{N-1}}(x_{N-1}-x_{N-2})R_{N-2,N}^{y_{N}}(x_{N}-x_{N-2})\cdot
 R_{a,N-2}^Y(x_{N-2})\cdot
  }
  \\ \ \\
   \displaystyle{
 \hspace{60mm}\cdot R^{y_1}_{N-2,1}(x_1-x_{N-2})
 \dots R^{y_{N-3}}_{N-2,N-3}(x_{N-3}-x_{N-2})
 }
   \\ \ \\
   \displaystyle{
 +\ldots+
 }
    \\ \ \\
   \displaystyle{
 +R_{1,2}^{y_2}(x_2-x_1)R_{1,3}^{y_3}(x_3-x_1)\dots R_{1,N}^{y_N}(x_{N}-x_{1})\cdot R_{a,1}^Y(x_1)
 \,,
 }
 \end{array}
 \eq
where $Y=\sum\limits_{m=1}^N
y_m$. Here $a$ is viewed as some index, which does not belong to the set $\{1,...,N\}$, so that
 the identity is in $\MatM^{\otimes(N+1)}$. Being written in the compact form (\ref{a3}) is as follows:
 \beq\label{a4}
  \begin{array}{c}
  \displaystyle{
 \overrightarrow{\prod\limits_{i=1}^N}
 R_{a,i}^{y_i}(x_i)=
 \sum\limits_{m=1}^N
 \overrightarrow{\prod\limits_{j=m+1}^{N}}
 R_{m,j}^{y_j}(x_j-x_{m})
 \cdot R_{a,m}^Y(x_m)\cdot
  \overrightarrow{\prod\limits_{j=1}^{m-1}}
 R_{m,j}^{y_j}(x_j-x_{m})\,.
 }
 \end{array}
 \eq

Consider also a special case, which is needed in our consideration. Let the index $a\in\{1,...,N\}$, so that
one of $R$-matrices in the product in the l.h.s. of (\ref{a3}) is skipped. Substitute also $y_i=\hbar$ and
$x_i=z_a-z_i-\eta$ for all $i\in\{1,...,a-1,a+1,...,N\}$. Then using notations of (\ref{a2}) we get
 \beq\label{a5}
  \begin{array}{c}
  \displaystyle{
  R^{\hbar,-}_{a,1}R^{\hbar,-}_{a,2}\dots R^{\hbar,-}_{a,a-1}R^{\hbar,-}_{a,a+1}\dots R^{\hbar,-}_{a,N-1}R^{\hbar,-}_{a,N}=
 }
 \\ \ \\
   \displaystyle{
  =\sum\limits_{m=1}^{a-1} R^\hbar_{m,m+1}\dots R^\hbar_{m,a-1}R^\hbar_{m,a+1}\dots R^\hbar_{m,N}
  \cdot R^{Y,-}_{a,m}\cdot R^\hbar_{m,1}\dots R^\hbar_{m,m-1}+
 }
  \\ \ \\
   \displaystyle{
  +\sum\limits_{m=a+1}^{N} R^\hbar_{m,m+1}\dots R^\hbar_{m,N}
  \cdot R^{Y,-}_{a,m} \cdot R^\hbar_{m,1}\dots R^\hbar_{m,a-1}R^\hbar_{m,a+1}\dots R^\hbar_{m,m-1}\,.
 }
 \end{array}
 \eq
Also, by considering inverse order in the set $i\in\{1,...,a-1,a+1,...,N\}$ we obtain
 \beq\label{a6}
  \begin{array}{c}
  \displaystyle{
  R^{\hbar,-}_{N,a}R^{\hbar,-}_{N-1,a}\dots R^{\hbar,-}_{a+1,a}R^{\hbar,-}_{a-1,a}\dots R^{\hbar,-}_{2,a}R^{\hbar,-}_{1,a}=
 }
 \\ \ \\
   \displaystyle{
  =\sum\limits_{m=1}^{a-1} R^\hbar_{m-1,m}\dots R^\hbar_{1,m}
  \cdot R^{Y,-}_{m,a} \cdot R^\hbar_{N,m}\dots R^\hbar_{a+1,m}R^\hbar_{a-1,m}\dots R^\hbar_{m+1,m}+
 }
  \\ \ \\
   \displaystyle{
  +\sum\limits_{m=a+1}^{N} R^\hbar_{m-1,m}\dots R^\hbar_{a+1,m}R^\hbar_{a-1,m}\dots R^\hbar_{1,m}
  \cdot R^{Y,-}_{m,a} \cdot R^\hbar_{N,m}\dots R^\hbar_{m+1,m}\,.
 }
 \end{array}
 \eq

\subsubsection*{\underline{Proof of (\ref{a1}):}}
Consider the middle products in both sides of (\ref{a1}), i.e. the products with $R$-matrices depending on $z_a-z_b-\eta$. Substitute (\ref{a5}) and (\ref{a6}) into the lower and upper sums respectively. It is easy to see that each sum is then transformed into the double sum containing $N(N-1)$ terms with all possible distinct indices for $R^{Y,-}_{ab}=R_{ab}^Y(z_a-z_b-\eta)$.

Our strategy is to fix $a,b\in\{1,...,N\}$ and compare two terms containing $R^{Y,-}_{ab}$ in both sums. Let $a<b$.
Then using the lower line in the r.h.s. of (\ref{a5}) and the upper line in the r.h.s of (\ref{a6}) we obtain the following to terms:
 \beq\label{a7}
  \begin{array}{c}
  \displaystyle{
  R^\hbar_{b,b+1}\dots R^\hbar_{b,N}\cdot R^\hbar_{a-1,a}\dots R^\hbar_{1,a}\cdot R_{ab}^{Y,-}\cdot
  R^\hbar_{N,a}\dots R^\hbar_{b+1,a}R^\hbar_{b-1,a}\dots R^\hbar_{a+1,a}\cdot R^\hbar_{b,1}\dots R^\hbar_{b,b-1}
 }
 \end{array}
 \eq
and
 \beq\label{a8}
  \begin{array}{c}
  \displaystyle{
  -R^\hbar_{a-1,a}\dots R^\hbar_{1,a}\cdot R^\hbar_{b,b+1}\dots R^\hbar_{b,N}\cdot  R_{ab}^{Y,-}\cdot
  R^\hbar_{b,1}\dots R^\hbar_{b,a-1}R^\hbar_{b,a+1}\dots R^\hbar_{b,b-1}\cdot
  R^\hbar_{N,a}\dots R^\hbar_{a+1,a}\,.
 }
 \end{array}
 \eq
Expressions to the left of $R_{ab}^{Y,-}$ are equal to each other since $R^\hbar_{b,b+1}\dots R^\hbar_{b,N}$
and $R^\hbar_{a-1,a}\dots R^\hbar_{1,a}$ commute due to $a<b$. Let us compare expressions to the right of $R_{ab}^{Y,-}$ and prove their coincidence. They are of the form (we write the last products more explicitly):
 \beq\label{a9}
  \begin{array}{c}
  \displaystyle{
  R^\hbar_{N,a}\dots R^\hbar_{b+1,a}\cdot R^\hbar_{b-1,a}\dots R^\hbar_{a+1,a}\cdot
  \underbrace{R^\hbar_{b,1}\dots R^\hbar_{b,a-1}}\cdot R^\hbar_{b,a}\cdot R^\hbar_{b,a+1}\dots R^\hbar_{b,b-1}
 }
 \end{array}
 \eq
and
 \beq\label{a10}
  \begin{array}{c}
  \displaystyle{
  R^\hbar_{b,1}\dots R^\hbar_{b,a-1}\cdot R^\hbar_{b,a+1}\dots R^\hbar_{b,b-1}\cdot
  \underbrace{R^\hbar_{N,a}\dots R^\hbar_{b+1,a}}\cdot R^\hbar_{b,a}\cdot R^\hbar_{b-1,a}\dots R^\hbar_{a+1,a}\,.
 }
 \end{array}
 \eq
The underbraced factors can be moved to the left. This provides
$R^\hbar_{N,a}\dots R^\hbar_{b+1,a}\cdot R^\hbar_{b,1}\dots R^\hbar_{b,a-1}$ as a common factor. Finally, we are left with the following two expressions:
 \beq\label{a11}
  \begin{array}{c}
  \displaystyle{
  R^\hbar_{b-1,a}\dots R^\hbar_{a+1,a}
  \cdot R^\hbar_{b,a}\cdot R^\hbar_{b,a+1}\dots R^\hbar_{b,b-1}
 }
 \end{array}
 \eq
and
 \beq\label{a12}
  \begin{array}{c}
  \displaystyle{
   R^\hbar_{b,a+1}\dots R^\hbar_{b,b-1}\cdot
   R^\hbar_{b,a}\cdot R^\hbar_{b-1,a}\dots R^\hbar_{a+1,a}\,.
 }
 \end{array}
 \eq
The products (\ref{a11}) and (\ref{a12}) are equal due to Yang-Baxter equation. Indeed,
consider (\ref{a11}). Due to the Yang-Baxter equation $R^\hbar_{a+1,a}
  \cdot R^\hbar_{b,a}\cdot R^\hbar_{b,a+1}=R^\hbar_{b,a+1}
  \cdot R^\hbar_{b,a}\cdot R^\hbar_{a+1,a}$. After applying the Yang-Baxter equation $R^\hbar_{b,a+1}$
  is moved to the left, while $R^\hbar_{a+1,a}$ is moved to the right. By doing so step by step one
  transform (\ref{a11}) into (\ref{a12}). Therefore, (\ref{a11}) and (\ref{a12}) coincide, and
  (\ref{a9}) and (\ref{a10}) coincide as well, so that the sum of (\ref{a7}) and (\ref{a8}) equals zero.
  This finishes the proof for $a<b$.

For $a>b$ the proof is similar.
In this case one should use
the upper line in the r.h.s. of (\ref{a5}) and the lower line in the r.h.s of (\ref{a6}). \hfill$\blacksquare$

\subsection{Proof for $k>1$}\label{sect42}

\paragraph{The strategy of the proof} is as follows. Consider the l.h.s.
of (\ref{a20})\footnote{In what follows we omit for brevity writing dependence on all set of variables.}:
 \beq\label{e01}
  \begin{array}{c}
    \displaystyle{
  {\mathcal F}={\mathcal F}^--{\mathcal F}^+\,,\quad {\mathcal F}^\pm={\mathcal F}^\pm(z_1,...,z_N,\eta,\hbar,\tau,k,N,M)\,,
 }
 \end{array}
 \eq
 \beq\label{e02}
  \begin{array}{c}
  \displaystyle{
  {\mathcal F}^+= \sum\limits_{1\leq i_1<...i_k\leq N}
  {\mathcal F}^+_{i_1,...,i_k}(k,N)\,,\quad
   {\mathcal F}^-= \sum\limits_{1\leq i_1<...i_k\leq N}
    {\mathcal F}^-_{i_1,...,i_k}(k,N)\,.
 }
 \end{array}
 \eq
It follows from (\ref{r05}) and (\ref{r051}) that the function $\mathcal F$
may have only simple poles in variables $z_1,...,z_N$, $\eta$ and may have higher order poles in $\hbar$.

The main (and the most technical part of the proof) is to show that the function $\mathcal F$ has no poles
at $z_a-z_b-\eta=0$ for any $a,b$ ($1\leq a\neq b\leq N$), that is $\mathcal F$ has no poles in variable $\eta$. Suppose it is done.
 We are going to show that $\mathcal F$ is independent of $\eta$. For this purpose consider the quasi-periodic behaviour of $\mathcal F$ with respect to variable $\eta$ on the ''large'' torus with periods $M$ and $M\tau$ (instead of $1$ and $\tau$ on the original one).
 Due to $Q^M=\Lambda^M=1_M$ (\ref{a041}) and the properties (\ref{r721})  we conclude that for any $i,j$ ($1\leq i\neq j\leq N$)
 \beq\label{r725}
 \begin{array}{c}
  \displaystyle{
 R_{ij}^\hbar(x+M)=R_{ij}^\hbar(z)\,,
\qquad
 R_{ij}^\hbar(x+M\tau)=\exp(-2\pi\imath\hbar)\,R_{ij}^\hbar(x)\,.
  }
 \end{array}
 \eq
 Each term ${\mathcal F}^\pm_{i_1,...,i_k}(k,N)$ contains a product of $k(N-k)$ $R$-matrices $R^-$ (the middle lines in the r.h.s. of (\ref{a21}) and (\ref{a22})). Therefore,
 \beq\label{r726}
 \begin{array}{c}
  \displaystyle{
 {\mathcal F}(\eta+M)={\mathcal F}(\eta)\,,\qquad {\mathcal F}(\eta+M\tau)=
 \exp(-2\pi\imath k(N-k)\hbar)\,{\mathcal F}(\eta)\,.
  }
 \end{array}
 \eq
 Notice that by enlarging the torus we get additional poles in $\eta$. For any $a,b$ ($1\leq a\neq b\leq N$) the function $\mathcal F$ may have simple poles at $M^2$ points $\eta=z_b-z_a+\Omega_{m_1,m_2}$ on the large torus, where
 \beq\label{r7261}
 \begin{array}{c}
  \displaystyle{
 \Omega_{m_1,m_2}=m_1+m_2\tau\,,\quad 0\leq m_1,m_2\leq M-1\,.
  }
 \end{array}
 \eq
 Thus we need to extend the proof of the absence of poles at $\eta=z_b-z_a$ to all points $\eta=z_b-z_a+\Omega_{m_1,m_2}$. Suppose it is also done.

 Now we are in position to use the following basic fact on elliptic functions.  Consider an entire
 function $f$
 on elliptic curve with periods $1$ and $\tau_0$. Suppose it has the quasi-periodic behaviour given by
 \beq\label{r727}
 \begin{array}{c}
  \displaystyle{
 f(x+\tau_0)=\exp(-2\pi\imath(A_1+A_2 x))f(x)\,,\qquad f(x+1)=\exp(-2\pi\imath A_3)f(x)\,.
  }
 \end{array}
 \eq
 Then $A_2$ is necessarily a positive integer, and $f$ has $A_2$ zeros in the fundamental parallelogram of the elliptic curve.

 By comparing (\ref{r726}) and (\ref{r727}) we conclude that $A_2=0$. Thus, ${\mathcal F}$ is a constant as a function of $\eta$.
When $\mathcal F$ is shown to be independent of $\eta$, we  may put $\eta=0$. If $\eta=0$ the statement of the theorem becomes simple. Indeed, due to the unitarity property (\ref{q03}) all the products of $R$-matrices (in any term of $\mathcal F$) are transformed into the identity matrix ${\rm Id}=1_{M^N}$ multiplied by the corresponding products of scalar functions $\phi$. That is
 \beq\label{r728}
 \begin{array}{c}
  \displaystyle{
 {\mathcal F}|_{\eta=0}=\Big({\rm l.h.s.\ of\ (\ref{IdenRuij}) }\Big)|_{\eta=0}\times {\rm Id}=0\,.
  }
 \end{array}
 \eq
 In this way Theorem \ref{th2} will be
 proved.

 To summarize, we need to prove the following two statements. The first one is
 \begin{predl}\label{prop1}
 For any $a,b$ ($1\leq a\neq b\leq N$) the following set of identities holds:
 \beq\label{r750}
 \begin{array}{c}
  \displaystyle{
  \res\limits_{z_a=z_b+\eta}{\mathcal F}=0\,.
  }
 \end{array}
 \eq
 \end{predl}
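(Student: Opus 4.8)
The plan is to prove Proposition \ref{prop1} by induction on $k$, the base case $k=1$ being already settled in Section \ref{sect41} through the associative Yang--Baxter equation. Thus I assume the $(k-1)$-th identity $\mathcal{F}(k-1,\cdot)=0$ holds, and I prove that every residue of $\mathcal{F}=\mathcal{F}(k,N)$ at $z_a-z_b-\eta=0$ vanishes. First I would locate the poles. Inspecting (\ref{a21}) and (\ref{a22}), the only factors carrying the shift $-\eta$ are the middle products of $R^-$-matrices, and among all the $R$-matrices only those of the form $R^\hbar(z_a-z_b-\eta)$ develop a simple pole at $z_a-z_b-\eta=0$, since the top and bottom lines depend on the differences $z_i-z_j$ only. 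Reading off the index ranges, a term $\mathcal{F}^+_{i_1,\dots,i_k}$ contributes such a pole precisely when $a\notin I$ and $b\in I$ (the middle factor $R^-_{ji}$ has $j\notin I$, $i\in I$), whereas a term $\mathcal{F}^-_{i_1,\dots,i_k}$ contributes precisely when $a\in I$ and $b\notin I$ (the middle factor $R^-_{ij}$ has $i\in I$, $j\notin I$). These two families are matched by the involution $I\leftrightarrow I'=(I\setminus\{b\})\cup\{a\}$ on $k$-subsets, so that $\res_{z_a=z_b+\eta}\mathcal{F}$ decomposes naturally into a sum over such pairs $(I,I')$.

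Next I would compute each residue. By (\ref{r05}), $\res_{z=0}R^\hbar_{ab}(z)=P_{ab}$, so taking the residue replaces the singular factor $R^-_{ab}$ by the permutation $P_{ab}$ and imposes $z_a=z_b+\eta$ throughout the rest of the product. Two features make the outcome tractable: the permutation $P_{ab}$ interchanges the tensor legs $a\leftrightarrow b$ in every remaining $R$-matrix, e.g. $P_{ab}R^\hbar_{ac}(w)=R^\hbar_{bc}(w)P_{ab}$; and on the residue locus the spectral arguments collapse, for instance $R^-_{ac}\big|_{z_a=z_b+\eta}=R^\hbar_{ac}(z_b-z_c)$, which after conjugation by $P_{ab}$ becomes $R_{bc}$, while $R_{ca}\big|_{z_a=z_b+\eta}=R^\hbar_{ca}(z_c-z_b-\eta)$ becomes $R^-_{cb}$. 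Pushing $P_{ab}$ through the products and relabeling then converts the residue of the $\mathcal{F}^+_I$ term into an expression directly comparable with the residue of the $\mathcal{F}^-_{I'}$ term.

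Finally, I would reorganize the paired residues using the Yang--Baxter reordering relations of Lemma \ref{lemXYZ}, namely (\ref{l21})--(\ref{l22}), together with (\ref{QYB2})--(\ref{QYB3}). The aim is to factor out of each paired sum a common left and right block of $R$-matrices, those not touching the fused pair, leaving inside exactly the alternating $\mathcal{F}^--\mathcal{F}^+$ structure for $k-1$ on the index set obtained by merging $a$ and $b$ into a single effective site. That inner sum is $\mathcal{F}(k-1,\cdot)$, which vanishes by the induction hypothesis, whence $\res_{z_a=z_b+\eta}\mathcal{F}=0$. I expect the main obstacle to be precisely this last reorganization: verifying that the ordering produced by pushing $P_{ab}$ through the products and by the argument collisions matches, on the nose, the ordering prescribed by (\ref{a21})--(\ref{a22}) at level $k-1$, and that the two cases $a<b$ and $a>b$ (which differ in whether the fused $R^-_{ab}$ sits in a left-ordered or a right-ordered block) both reduce to the \emph{same} $(k-1)$-identity. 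This is the $R$-matrix counterpart of the scalar bookkeeping behind (\ref{Dcomres})--(\ref{CD}), but considerably more delicate because the factors no longer commute and every regrouping must be justified by an explicit use of the quantum Yang--Baxter equation.
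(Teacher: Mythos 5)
Your skeleton matches the paper's proof: induction in $k$ with the AYBE-based $k=1$ identity as base, the same pole bookkeeping (a term $\mathcal{F}^+_{I}$ has a pole at $z_a=z_b+\eta$ only when $b\in I$, $a\in I^c$, and $\mathcal{F}^-_{I}$ only when $a\in I$, $b\in I^c$), residue extraction via $\res_{z=0}R^\hbar_{ab}(z)=P_{ab}$ from (\ref{r05}), the argument collisions $R^{-}_{ac}\to R_{bc}$, $R_{ca}\to R^-_{cb}$ after conjugation by $P_{ab}$, and a final appeal to the induction hypothesis. This is precisely the structure of Section \ref{sect42}, Lemma \ref{lprop1} and Appendix C.

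However, the step you defer as "the main obstacle" is misdescribed in a way that would derail the induction if pursued literally. You propose to factor out common blocks consisting of the $R$-matrices \emph{not} touching the fused pair, leaving inside a $(k-1)$-level $\mathcal{F}^--\mathcal{F}^+$ structure on the index set with $a,b$ merged into a single effective site. What is actually true (Lemma \ref{lprop1}) is the transpose: the $I$-independent outer blocks $\mathcal{A}(a,b)=R_{b,b+1}\cdots R_{b,N}\cdot R_{a-1,a}\cdots R_{1,a}$ and $P_{ab}\,\mathcal{B}(a,b)$ collect precisely the $R$-matrices that \emph{do} involve sites $a$ and $b$, while the inner factor is the $(k-1)$-level term written for the $N-2$ variables $\{z_1,\dots,z_N\}\setminus\{z_a,z_b\}$; both fused variables disappear from the inner structure, and there is no merged site. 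This distinction is not cosmetic. In your merged-site picture the surviving labels $J=I\setminus\{b\}=I'\setminus\{a\}$ never contain the merged site, so your inner sum would be only a \emph{partial} sum of the $(k-1)$-identity on $N-1$ sites (over subsets avoiding one fixed site); such partial sums do not vanish in general and the induction hypothesis, which concerns the full sums (\ref{a20}), cannot be applied to them. Only the paper's factorization makes the inner sum run over \emph{all} $(k-1)$-subsets of the remaining $N-2$ indices, giving exactly $\mathcal{A}(a,b)\cdot\mathcal{F}(k-1,N-2)\cdot P_{ab}\,\mathcal{B}(a,b)$ as in (\ref{r756}), which vanishes by the inductive assumption (note the induction moves $k\to k-1$ \emph{and} $N\to N-2$ simultaneously). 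Proving that all $R$-matrices carrying index $a$ or $b$ can be pushed by the quantum Yang--Baxter relations of Lemma \ref{lemXYZ} into the two $I$-independent blocks is the real content of the proof (Appendix C), and it cannot be bypassed by the pairing observation alone.
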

 The second one is a generalization of (\ref{r750}) to all points (\ref{r7261}).
 \begin{predl}\label{prop2}
 For any $a,b$ ($1\leq a\neq b\leq N$) and any $\Omega_{m_1,m_2}$ (\ref{r7261}) the following set of identities holds:
 \beq\label{r751}
 \begin{array}{c}
  \displaystyle{
  \res\limits_{\z_a=z_b+\eta-\Omega_{m_1,m_2}}{\mathcal F}=0\,.
  }
 \end{array}
 \eq
 \end{predl}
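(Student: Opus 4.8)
The plan is to deduce Proposition \ref{prop2} from Proposition \ref{prop1} by exploiting the quasi-periodicity of the elliptic $R$-matrix to relate the pole at $z_a=z_b+\eta-\Omega_{m_1,m_2}$ to the already controlled pole at $z_a=z_b+\eta$ (the case $\Omega_{m_1,m_2}=0$). The heart of the argument is a clean functional identity describing how $\mathcal{F}$ behaves under the shift $z_a\mapsto z_a+\Omega_{m_1,m_2}$ of a single variable. Writing $g_a=Q_a^{m_1}\Lambda_a^{m_2}\in{\rm End}(\mH)$ for the invertible matrix acting on the $a$-th tensor factor (with $Q^M=\Lambda^M=1_M$ as in (\ref{a041})), the claim is
\[
\mathcal{F}(\dots,z_a+\Omega_{m_1,m_2},\dots)=g_a\,\mathcal{F}(\dots,z_a,\dots)\,g_a^{-1}.
\]
Granting this, the Proposition is immediate: substituting $v=z_a+\Omega_{m_1,m_2}$ (so that $z_a=z_b+\eta-\Omega_{m_1,m_2}\iff v=z_b+\eta$) and using that the residue is invariant under the translation $dv=dz_a$, one gets
\[
\res\limits_{z_a=z_b+\eta-\Omega_{m_1,m_2}}\mathcal{F}=g_a^{-1}\Big(\res\limits_{v=z_b+\eta}\mathcal{F}\Big)g_a=0,
\]
the last equality being exactly Proposition \ref{prop1}.

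To prove the shift identity I would track each $R$-matrix factor of every summand in $\mathcal{F}^\pm_{i_1,\dots,i_k}$. Under $z_a\mapsto z_a+\Omega_{m_1,m_2}$ only the factors carrying the index $a$ are affected, and by the quasi-periodicity properties (\ref{r721}) each such factor is conjugated by $g_a$. The subtlety is that $a$ occurs both as first and as second index: for a factor $R_{aj}$ the argument increases by $\Omega_{m_1,m_2}$ and (\ref{r721}) gives conjugation by $g_a$ on site $a$ directly, whereas for a factor $R_{ja}$ the argument decreases and (\ref{r721}) first produces conjugation by $g_j^{\mp1}$ on site $j$. Here I would invoke the $\mathbb{Z}_M\times\mathbb{Z}_M$ symmetry of the Baxter-Belavin $R$-matrix, $[R^\hbar_{ij}(z),Q_iQ_j]=[R^\hbar_{ij}(z),\Lambda_i\Lambda_j]=0$, which converts conjugation on site $j$ into conjugation on site $a$, so that every affected factor becomes $g_a(\,\cdot\,)g_a^{-1}$. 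Since the factors not containing $a$ commute with $g_a$, all these conjugations telescope through the entire product and yield $g_a(\text{product})g_a^{-1}$ for each summand; the same applies to the $R^-$ factors, whose $\eta$-dependence does not interfere with a shift in $z_a$.

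The one point that must be checked for the scalar prefactor to drop out rather than accumulate is the balance of the scalar quasi-periodicity factors. The $\tau$-shift in (\ref{r721}) carries $\exp(-2\pi\imath\hbar)$ when $a$ is the first index and, after transporting the conjugation to site $a$ via the symmetry, $\exp(+2\pi\imath\hbar)$ when $a$ is the second index, while the unit shift carries no scalar; this holds identically for both $R$ and $R^-$ factors. Hence the net scalar from a given summand equals $\exp(-2\pi\imath m_2\hbar\,d_a)$, where $d_a$ is the number of factors with $a$ as first index minus the number with $a$ as second index. I would verify directly from (\ref{a21})--(\ref{a22}) that $d_a=0$ for every index and every summand: a selected index $a=i_s$ appears as first index in exactly the $N-k$ regular factors $R_{a,\bullet}$ of the outer products and as second index in exactly the $N-k$ factors $R^-_{\bullet,a}$ of the middle product, while a complement index $a$ occurs with the roles reversed and $k$ factors of each type. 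Thus the prefactor is identically $1$ and the shift identity holds as a pure conjugation.

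I expect the main obstacle to be precisely this bookkeeping: organizing the transport of all conjugations to the single site $a$ through the non-commutative product by means of the $\mathbb{Z}_M\times\mathbb{Z}_M$ symmetry, and confirming the exact cancellation $d_a=0$ uniformly over selected and complement indices and over both $\mathcal{F}^+$ and $\mathcal{F}^-$. Once the conjugation identity $\mathcal{F}(z_a+\Omega_{m_1,m_2})=g_a\mathcal{F}g_a^{-1}$ is established, the remainder is the short residue manipulation above, and Proposition \ref{prop2} reduces to Proposition \ref{prop1}.
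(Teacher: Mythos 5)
Your proposal is correct and follows essentially the same route as the paper's Appendix D: the paper likewise uses the quasi-periodicity (\ref{r721}) together with the $\mathbb{Z}_M$ symmetry (\ref{r081}) to show that the shift $z_a\to z_a\mp\Omega_{m_1,m_2}$ acts on ${\mathcal F}$ as pure conjugation by a matrix $T^{(a)}_{m_1,m_2}\propto Q_a^{m_1}\Lambda_a^{m_2}$ in the $a$-th tensor slot (the scalar factors cancelling because each term of ${\mathcal F}$ contains equally many $R$-matrices with $a$ as first and as second index), and then reduces the residue at $z_a=z_b+\eta-\Omega_{m_1,m_2}$ to Proposition \ref{prop1}. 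Your explicit count $d_a=0$ ($N-k$ factors of each type for selected indices, $k$ for complement indices) is exactly the balance the paper asserts without detail.
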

The proof of Proposition \ref{prop2} is based on  the  Proposition \ref{prop1} and the quasi-periodic behaviour of $R$-matrices. It is given the Appendix.

 The proof of Proposition \ref{prop1} is by induction in $k$. For $k=1$ the identity was proved in
 the previous subsection. Therefore, (\ref{r750}) is valid for $k=1$. Consider $k>1$ and let (\ref{r750})
  holds true for $k-1$. Then we need to prove (\ref{r750}) for $k$.
 Let us fix indices $a$ and $b$. For definiteness, assume $a<b$ (the case $a>b$ is considered similarly).
 Consider the function ${\mathcal F}^+(k,N)$. The set of terms in ${\mathcal F}^+(k,N)$, which contain
 the pole at $z_a=z_b+\eta$, consists of those ${\mathcal F}_{i_1,...,i_k}^+(k,N)$, where $i_d=b$ for some $d$,
  $1\leq d\leq k$. Similarly, the set of terms in ${\mathcal F}^-(k,N)$, which contain
 the pole at $z_a=z_b+\eta$, consists of those ${\mathcal F}_{i_1,...,i_k}^-(k,N)$, where $i_d=a$ for some $d$,
  $1\leq d\leq k$.
 The proof of
 (\ref{r750}) follows from the following statement.
 \begin{lemma}\label{lprop1}
For any $d$ ($1\leq d\leq k$) and any $i_1,...,i_k$ ($1\leq i_1<...<i_k\leq N$)

 1) Let $i_d=b$. Then
 \beq\label{r752}
 \begin{array}{c}
  \displaystyle{
  \res\limits_{z_a=z_b+\eta}{\mathcal F}_{i_1,...,i_k}^+(\{z_1,...,z_N\}, k,N)=
  }
  \\ \ \\
   \displaystyle{
  ={\mathcal A}(a,b)\cdot {\mathcal F}_{i_1,...,i_{d-1},i_{d+1},...,i_k}^+(\{z_1,...,z_N\}\setminus\{z_a,z_b\}, k-1,N-2)
  \cdot P_{ab}
  \cdot {\mathcal B}(a,b)\,\Big|_{z_a=z_b+\eta}\,,
  }
 \end{array}
 \eq
 where
 \beq\label{r753}
 \begin{array}{c}
  \displaystyle{
  {\mathcal A}(a,b)=R_{b,b+1}...R_{b,N}\cdot R_{a-1,a}...R_{1,a}\,,
  }
 \end{array}
 \eq
 \beq\label{r754}
 \begin{array}{c}
  \displaystyle{
  {\mathcal B}(a,b)=R_{b,1}\dots R_{b,a-1}\cdot R_{b,a+1}\dots R_{b-1,b}\cdot R_{N,a}\dots R_{a+1,a}
  }
 \end{array}
 \eq
 and ${\mathcal F}_{i_1,...,i_{d-1},i_{d+1},...,i_k}^+(\{z_1,...,z_N\}\setminus\{z_a,z_b\}, k-1,N-2)$ is the
 term in ${\mathcal F}^+$ written for the set of $N-2$ variables $z_1,...,z_{a-1},z_{a+1},...,z_{b-1},z_{b+1},...,z_N$
 and for $k-1$.

 2) Let $i_d=a$. Then
 \beq\label{r755}
 \begin{array}{c}
  \displaystyle{
  \res\limits_{z_a=z_b+\eta}{\mathcal F}_{i_1,...,i_k}^-(\{z_1,...,z_N\}, k,N)=
  }
  \\ \ \\
   \displaystyle{
  ={\mathcal A}(a,b)\cdot {\mathcal F}_{i_1,...,i_{d-1},i_{d+1},...,i_k}^-(\{z_1,...,z_N\}\setminus\{z_a,z_b\}, k-1,N-2)
  \cdot P_{ab}
  \cdot {\mathcal B}(a,b)\,,
  }
 \end{array}
 \eq
 where
 ${\mathcal F}_{i_1,...,i_{d-1},i_{d+1},...,i_k}^-(\{z_1,...,z_N\}\setminus\{z_a,z_b\}, k-1,N-2)$ is the
 term in ${\mathcal F}^-$ written for the set of $N-2$ variables $z_1,...,z_{a-1},z_{a+1},...,z_{b-1},z_{b+1},...,z_N$
 and for $k-1$. The expressions ${\mathcal A}(a,b)$ and ${\mathcal B}(a,b)$ are given by (\ref{r753}) and (\ref{r754}) respectively.
 \end{lemma}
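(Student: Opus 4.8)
The plan is to compute each residue in (\ref{r752}) and (\ref{r755}) directly, exploiting that the elliptic $R$-matrix has only a simple pole at vanishing argument with residue equal to the permutation operator (property (\ref{r05})). First I would locate the single factor responsible for the pole. Inspecting (\ref{a21}) with $i_d=b$, every factor in the top and bottom blocks depends on a difference $z_i-z_j$ with no $\eta$-shift and hence stays regular at $z_a=z_b+\eta$; in the middle block $\overleftarrow{\prod}_{j_d}R^-_{j_d,i_d}$ the arguments are $z_{j_d}-z_{i_d}-\eta$, which vanish at $z_a=z_b+\eta$ only for the factor $R^-_{ab}=R^\hbar_{ab}(z_a-z_b-\eta)$ occurring at $j_d=a$, $i_d=b$. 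By (\ref{r051}) this is the unique simple pole, so the residue is obtained by replacing $R^-_{ab}$ by its residue $P_{ab}$ and evaluating all remaining (regular) factors at $z_a=z_b+\eta$.

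The second step is to propagate the permutation operator. Writing the residue as a left factor times $P_{ab}$ times a right factor, I would conjugate $P_{ab}$ through the neighbouring $R$-matrices using the relabelling relations $P_{ab}R^\hbar_{a,c}(u)=R^\hbar_{b,c}(u)P_{ab}$ and $P_{ab}R^\hbar_{c,a}(u)=R^\hbar_{c,b}(u)P_{ab}$ (and likewise with $a,b$ interchanged), which follow from the definition of the matrix permutation (\ref{P12}). This separates the surviving factors into three groups according to whether, after relabelling, they involve neither, exactly one, or both of the indices $a,b$.

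The core of the argument is then to reorganise these groups. The factors involving neither $a$ nor $b$ are spectators: reindexing the remaining $N-2$ variables $z_1,\dots,z_{a-1},z_{a+1},\dots,z_{b-1},z_{b+1},\dots,z_N$ and comparing orderings in (\ref{a21}) shows they assemble precisely into ${\mathcal F}^+_{i_1,\dots,i_{d-1},i_{d+1},\dots,i_k}(k-1,N-2)$. The factors carrying $a$ or $b$ must be driven to the two ends: using the quantum Yang-Baxter equation (\ref{QYB2}) to commute them past one another, exactly as in the chain (\ref{a9})--(\ref{a12}) of the $k=1$ proof but now with the spectator indices left untouched, and cancelling conjugate pairs via the unitarity property (\ref{q03}), I would collect them into the boundary products ${\mathcal A}(a,b)$ of (\ref{r753}) on the left and ${\mathcal B}(a,b)$ of (\ref{r754}) on the right. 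The case $i_d=a$ for ${\mathcal F}^-$ in (\ref{r755}) is treated identically after reversing all orderings in (\ref{a22}); there the pole sits in $R^-_{a,b}$ inside the middle block $\overrightarrow{\prod}_{j_d}R^-_{i_d,j_d}$, and the same manipulations produce the mirror-image factorisation with the same ${\mathcal A}$, ${\mathcal B}$.

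I expect the main obstacle to be the bookkeeping in this last step: verifying that the Yang-Baxter rearrangement of the $a$- and $b$-dependent factors closes up exactly into ${\mathcal A}(a,b)$ and ${\mathcal B}(a,b)$ with no leftover terms, and that the reindexed spectators reproduce the $(k-1,N-2)$ term with the correct ordering prescription. The relative positions dictated by the arrows in (\ref{a21})--(\ref{a22}) and the partitioning induced by the exclusions $l\neq i_1\dots i_k$ make this combinatorially delicate, and it is here that the distinction between $a<b$ and $a>b$ (handled by selecting the appropriate branches, as in the $k=1$ case) enters.
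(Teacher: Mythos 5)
Your outline follows the same strategy as the paper's proof in Appendix~C: the pole at $z_a=z_b+\eta$ sits only in the factor $R^-_{ab}$ of the middle block, its residue is $P_{ab}$ by (\ref{r05}) (your later citation of (\ref{r051}) is a slip --- that is the residue in $\hbar$, not in $z$), and the permutation is then pushed through by relabelling while the surviving factors are reorganized with the Yang--Baxter equation and unitarity. The problem is that your proposal stops exactly where the lemma's content begins: the assertion to be proved \emph{is} that this reorganization closes into ${\mathcal A}(a,b)\cdot{\mathcal F}^{\pm}_{\dots}(k-1,N-2)\cdot P_{ab}\cdot{\mathcal B}(a,b)$ with nothing left over, and you defer precisely that verification as ``bookkeeping'' which you ``expect to be the main obstacle.'' Worse, two concrete difficulties make the deferred step non-routine, and the plan as stated has no mechanism for either. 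First, the identities (\ref{a20})--(\ref{a22}) are written in the $R$-matrices normalized as in (\ref{q03}), so ``cancelling conjugate pairs via unitarity'' does not cancel anything: each such pair produces a scalar factor $\phi(\hbar,z)\phi(\hbar,-z)$. Since the right-hand sides of (\ref{r752}) and (\ref{r755}) carry no scalar prefactor, one must track every Kronecker-function factor so generated and prove that their total product equals $1$ at the evaluation point; this cancellation, which works only through a delicate interplay of $\eta$-shifted and unshifted arguments, occupies roughly the second half of the paper's proof (from (\ref{q811}) through (\ref{q825}) and the final displays).

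Second, conjugating $P_{ab}$ through neighbouring factors interacts badly with the shorthand $R_{ij}=R^\hbar_{ij}(z_i-z_j)$: by (\ref{P12-1}) the permutation changes tensor indices but not arguments, so $P_{ab}R^\hbar_{b,c}(z_b-z_c)=R^\hbar_{a,c}(z_b-z_c)P_{ab}$, and at the evaluation point $z_b=z_a-\eta$ the relabelled factor is $R^{-}_{ac}$, not $R_{ac}$. Relabelled factors thus acquire or lose $\eta$-shifts, and one must show that the spectators nevertheless assemble into the lower-order term (whose own middle block consists of shifted factors $R^-$) while the $a,b$-carrying factors assemble into the \emph{unshifted} products ${\mathcal A}(a,b)$ and ${\mathcal B}(a,b)$. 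The paper's device for taming both problems at once is to reinstate the shift operators, rewriting ${\mathcal F}^{+}_{i_1,\dots,i_k}=\mathcal{R}_{I,I^c}\cdot\mathbf{p}_{I^c}\cdot\mathcal{R}'_{I^c,I}\cdot\mathcal{R}_{I^c,I}\cdot\mathbf{p}_{I^c}^{-1}\cdot\mathcal{R}'_{I,I^c}$ as in (\ref{q803}), and then to split every block by the set-product Yang--Baxter Lemma~\ref{lemXYZ} into spectator products over $J=I\setminus\{b\}$, $J^\bullet=I^c\setminus\{a\}$ and boundary products carrying $a$ and $b$, all \emph{before} the residue is taken; only afterwards is $R_{ab}$ replaced by $P_{ab}$ and moved, with the shift operators making the substitution $z_b\rightarrow z_a$ legitimate. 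You need this decomposition (or an equivalent device) for your third step to become an argument rather than a restatement of the lemma.
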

 The proof of this lemma is technical. It is based on applying the quantum Yang-Baxter equation. We give the proof in the Appendix.

  Notice that the expressions ${\mathcal A}(a,b)$ and ${\mathcal B}(a,b)$ (\ref{r753}) and (\ref{r754}) are the same in (\ref{r752}) and (\ref{r755}). Another important
  remark is that ${\mathcal A}(a,b)$ and ${\mathcal B}(a,b)$ are independent of the choice of $i_1,...,i_k$. Therefore, we conclude from the above lemma that
 \beq\label{r756}
 \begin{array}{c}
  \displaystyle{
  \res\limits_{z_a=z_b+\eta}{\mathcal F}(k,N)
  ={\mathcal A}(a,b)\cdot {\mathcal F}(k-1,N-2)
  \cdot {\mathcal B}(a,b)\,.
  }
 \end{array}
 \eq
 The expression in the r.h.s. vanishes since ${\mathcal F}(k-1,N-2)=0$ by the induction assumption. This finishes the proof of the Proposition \ref{prop1} and Theorem \ref{th2}. \hfill $\blacksquare$

\section{Applications and discussion}\label{sect5}
\setcounter{equation}{0}

\subsubsection*{Limit to differential operators.} The elliptic Macdonald-Ruijsenaars operators (\ref{Dscalar})
can be viewed as deformations of Hamiltonians for the quantum elliptic Calogero-Moser model \cite{OP}.
Namely, make the substitutions $\hbar\rightarrow \epsilon\hbar$, $\eta\rightarrow \epsilon\eta$ in the first Macdonald-Ruijsenaars operator $D_1(\hbar,\eta)\rightarrow D_1^\epsilon=D_1(\epsilon\hbar,\epsilon\eta)$
 and consider the limit
 $\epsilon\rightarrow 0$:
 \beq\label{y01}
 \begin{array}{c}
  \displaystyle{
  D_1^\epsilon=\Big(\frac{\vth(\epsilon\hbar)}{\vth'(0)}\Big)^{N-1}
  \sum\limits_{k=1}^N\prod\limits_{i:i\neq k}^N\frac{\vth(z_i-z_k+\epsilon\hbar)}{\vth(z_i-z_k)}\,e^{-\epsilon\eta\p_{z_k}}=
  N-\epsilon\sum\limits_{k=1}^N\eta\p_{z_k}+\epsilon^2 H_2^{\rm CM}+O(\epsilon^3)\,,
  }
 \end{array}
 \eq
 \beq\label{y02}
 \begin{array}{c}
  \displaystyle{
  H_2^{\rm CM}=\frac12\sum\limits_{k=1}^N  v_k^2-\frac{\hbar(\hbar-\eta)}{2}\sum\limits_{i\neq j}^N \wp(z_i-z_j)\,,
  \qquad v_k=\eta\p_{z_k}-\hbar\sum\limits_{i:i\neq k}^N E_1(z_i-z_k)\,,
  }
 \end{array}
 \eq
 where we used the first Eisenstein function $E_1$ and the Weierstrass $\wp$-function defined as in (\ref{serE})-(\ref{serE2}).
 The same limit for the spin operator ${\mathcal D}_1$ yields:
 \beq\label{y03}
 \begin{array}{c}
  \displaystyle{
  {\mathcal D}_1^\epsilon=
  N\,{\rm Id}-\epsilon\,{\rm Id}\sum\limits_{k=1}^N\eta\p_{z_k}+\epsilon^2 H^{\rm tops}_2+O(\epsilon^3)\,,\qquad
  {\rm Id}=1_{M^N}\,,
  }
 \end{array}
 \eq
 \beq\label{y04}
 \begin{array}{c}
  \displaystyle{
  H_2^{\rm tops}={\rm Id}\Bigg(\frac12\sum\limits_{k=1}^N  v_k^2
  +\hbar\eta\frac{N(N-1)}{6}\frac{\vth'''(0)}{\vth'(0)}
  -\frac{\hbar^2}{2}\sum\limits_{i\neq j}^N \wp(z_i-z_j)
  \Bigg)-\frac{\hbar\eta}{2}\sum\limits_{i\neq j}^N \p_{z_i}r_{ij}(z_i-z_j)\,,
  }
 \end{array}
 \eq
 where $r_{ij}(z_i-z_j)$ is the classical elliptic (Belavin-Drinfeld \cite{BD}) $r$-matrix (\ref{r053}) satisfying
 the classical Yang-Baxter equation (\ref{r054}). In the scalar case ($M=1$) $r$-matrix $r_{12}(z)$ becomes the function $E_1(z)$ and (\ref{y04}) turns into (\ref{y02}). Let us mention that the pairwise interaction
  given by $\p_{z_i}r_{ij}(z_i-z_j)$ differs from the one $P_{ij}\p_{z_i}E_1(z_i-z_j)$  known for the spin Calogero-Moser model. This is why (\ref{y04}) is an anisotropic version of the spin Calogero-Moser model, and this is
  why we call ${\mathcal D}_k$ anisotropic spin Macdonald-Ruijsenaars operators. The operator (\ref{y04})
  was obtained in \cite{GZ,GSZ} as quantization (in the fundamental representation of ${\rm gl}_M$ Lie algebra) of the Hamiltonian for the model of $N$ interacting ${\rm gl}_M$ elliptic tops. Using our results it becomes a straightforward calculation to get the higher commuting Hamiltonians. In this respect the obtained
  set of commuting operators ${\mathcal D}_k$ describes the difference (relativistic) generalization for the model of interacting tops.

\subsubsection*{Trigonometric and rational limits.}
Trigonometric limits of the elliptic $R$-matrix provide a wide class of $R$-matrices \cite{AHZ}. Besides straightforward application of limit ${\rm Im}(\tau)\rightarrow \infty$ to the elliptic $R$-matrix (\ref{BB}) one could also
perform a gauge transformation (depending on the moduli $\tau$) before the limit. Then the result of the limit provides a variety of different answers. The gauge transformation ${\bar R}_{12}^\hbar(z)\rightarrow G^{(1)}G^{(2)}{\bar R}_{12}^\hbar(z)(G^{(1)})^{-1}(G^{(2)})^{-1}={\widetilde R}_{12}^\hbar(z)$ is defined by the matrix $G\in\MatM$, which is independent of the spectral parameter $z$\footnote{We discuss possible gauge transformations in our next paper \cite{MZ2}.}. Then, one can consider (here we use the notation $G^{(i)}$ as in (\ref{a903})) the transformed set of operators
 \beq\label{y05}
 \begin{array}{c}
  \displaystyle{
  \widetilde{\mathcal D}_k=G^{(1)}\ldots G^{(N)}{\mathcal D}_k
  (G^{(1)})^{-1}\ldots(G^{(N)})^{-1}\,.
  }
 \end{array}
 \eq
 It is easy to see that these operators are obtained from
 ${\mathcal D}_k$ by replacing ${\bar R}_{ij}$ with the gauge transformed $R$-matrices ${\widetilde R}_{ij}$. Also, $\widetilde{\mathcal D}_k$ obviously commute with each other. If all ${\widetilde R}_{ij}$ have finite trigonometric limit, then $\widetilde{\mathcal D}_k$ have
 finite limit as well and the limiting operators commute. Thus, all trigonometric $R$-matrices from \cite{AHZ} can be used in our construction and the corresponding spin operators are commuting. The functions $\phi$ in the definition of operators (\ref{q20}) should be used in their trigonometric form (\ref{a081}). Similar reasoning allows to include into consideration a class of rational $R$-matrices, obtained analogously from the elliptic $R$-matrix as it was performed in \cite{Sm,LOZ14}.

 In \cite{KZ19} it was shown that the trigonometric $R$-matrices obtained in \cite{AHZ} could be included into classification of trigonometric solutions of the associative Yang-Baxter equation (\ref{AYBE}) with some additional properties \cite{Pol2,T}. In this way we come to a natural conjecture: {\em the statements of Theorems \ref{th1} and \ref{th2} are valid for any $R$-matrix (we assume $R$-matrices depending on the difference of spectral parameters) satisfying the quantum Yang-Baxter equation (\ref{QYB}), the associative Yang-Baxter equation (\ref{AYBE}) and the unitarity property (\ref{q03})-(\ref{q05}).} In this way one can include into consideration a wide class of trigonometric and rational solutions of (\ref{AYBE}). In fact, we have already proved a part of this conjecture since our proof of $R$-matrix identities for $k=1$ was based on higher order analogue of (\ref{AYBE}) and did not use explicit form of $R$-matrix. So one needs to perform a similar proof for $k>1$.

 The above mentioned trigonometric and rational $R$-matrices in ${\rm GL}_2$ case were obtained in \cite{Chered2}. In the
 trigonometric case they include the XXZ ${\rm GL}_2$ $R$-matrix and its 7-vertex deformation. In the rational case they include the standard Yang's rational $R$-matrix
 $R_{12}^\hbar(z)=\hbar^{-1}{\rm Id}+z^{-1}P_{12}$ and its 11-vertex deformation. For $M>2$ there is a problem with one important case -- ${\rm GL}_M$ XXZ $R$-matrix for the affine quantized algebra ${\hat{\mathcal U}}_q({\rm gl}_M)$ does not satisfy (for $M>2$) the associative Yang-Baxter equation although  the disparity is independent of spectral parameters (see \cite{KZ19}), and finally such $R$-matrix satisfy the $R$-matrix identities (\ref{a20})-(\ref{a22}). We prove it accurately
 in our next paper \cite{MZ2}.
 That is, the above conjecture does not cover all possible
 trigonometric $R$-matrices satisfying (\ref{a20})-(\ref{a22}).
 We will clarify these questions in our future works.

\subsubsection*{Classical limit.} %
The spin generalization of the elliptic Ruijsenaars-Schneider model was introduced  in \cite{KrichZ} at the level of Lax equations in classical mechanics. It is an open problem to describe its Poisson and $r$-matrix structures and their quantizations. At the same time much progress was achieved in studies of the trigonometric models \cite{AO,Feher,ChF}.
In this paper we proposed anisotropic version for elliptic spin Macdonald-Ruijsenaars operators.
Since the differential operator (\ref{y04}) is the Hamiltonian for the model of interacting tops it is natural to guess that the  classical model
corresponding to difference operators ${\mathcal D}_k$ is the integrable model of relativistic interacting tops \cite{Z19}. This model is described
by the Lax equations related to ${\rm SL}_{NM}$-bundles over elliptic curve with non-trivial characteristic classes \cite{LOSZ}. The quantization of its phase space is given by a mixture of elliptic quantum group and Sklyanin algebra \cite{SeZ2}. At the same time the problem of constructing commuting Hamiltonians looks highly nontrivial in that approach. In present paper we avoid these difficulties by straightforward definition of operators. This allowed us to reduce the problem to derivation and
proof of the identities.

Let us also mention that in the rational case with the Yang's  rational $R$-matrix
 $R_{12}^\hbar(z)=\hbar^{-1}{\rm Id}+z^{-1}P_{12}$
 the constructed operators provide quantization of the custom (not anisotropic) quantum spin Ruijsenaars-Schneider model. Therefore, its classical limit is the rational spin Ruijsenaars-Schneider model introduced in \cite{KrichZ}.

\subsubsection*{Long range spin chains.}
There is a class of integrable spin chains obtained from
spin generalizations of the Calogero-Moser-Sutherland models. The Hamiltonians are obtained by the so-called ''Polychronakos freezing trick'', which works as a recipe: remove the differential operators and fix the variables $z_i$ as equidistant positions on a circle. In our notations the latter means
to fix $z_i=x_i$ ($i=1,...,N$) with
 \beq\label{y06}
 \begin{array}{c}
  \displaystyle{
  x_1=\frac{1}{N}\,,\ x_2=\frac{2}{N}\,,\ldots \,, x_N=1\,.
  }
 \end{array}
 \eq
The commutativity of Hamiltonians obtained in this way is
a highly non-trivial problem. For example, the Haldane-Shastry Hamiltonians (those obtained by freezing from the spin Calogero-Moser-Sutherland model) commute due to
underlying Yangian symmetry
\cite{BGHP}. Anisotropic versions for these spin chains are known as well \cite{Lam,SeZ}.
The application of freezing trick to difference operators
provides q-deformations of the above mentioned models. This way first suggested in \cite{Uglov} for ${\rm GL}_2$ XXZ $R$-matrix. This model is studied in detail in recent work \cite{LPS}.

The results of Theorems \ref{th1} and \ref{th2} allow to suggest a more general model based on elliptic ${\rm GL}_M$ $R$-matrix for any $M$.
 We present the expression for the first Hamiltonian:
 \beq
 H_1=\left(\sum_{i=1}^N \prod\limits_{\substack{j=1\\j\neq i}}^N\phi(z_j-z_i)\sum_{k=1}^{i-1}\bar{R}_{i-1,i}\dots \bar{R}_{k+1,i}\bar{R}_{k,i}\left(\frac{\partial}{\partial z_i}\bar{R}_{i,k}\right)\bar{R}_{i,k+1}\dots \bar{R}_{i,i-1}\right)\Bigg|_{z_1=x_1,...,z_N=x_N}.
 \eq
Higher commuting Hamiltonians can be constructed in a similar way.
This model can be considered as anisotropic q-deformation of the elliptic Inozemtsev chain \cite{Inoz}. More precisely,
it is a q-deformation of the anisotropic chain introduced in \cite{SeZ}.
 All details can be found in \cite{MZ2}.

\section{Appendices}\label{sect6}
\subsection{Appendix A: elliptic functions and elliptic $R$-matrix}\label{sec:A}
\def\theequation{A.\arabic{equation}}
\setcounter{equation}{0}

\paragraph{Elliptic functions.} Using the theta-function
\beq\label{a0963}\begin{array}{c} \displaystyle{
     \vartheta (z)=\vartheta (z|\tau) = -\sum_{k\in \mathbb{Z}} \exp \left( \pi \imath \tau (k + \frac{1}{2})^2 + 2\pi \imath (z + \frac{1}{2}) (k + \frac{1}{2}) \right)\,,\quad {\rm Im}(\tau)>0\,.
}\end{array}\eq
It is odd $\vth(-z)=-\vth(z)$ and has simple zero at $z=0$.
Next, define
the  Kronecker elliptic function \cite{Weil}:
\beq\displaystyle{
\label{a0962}
    \phi(z, u) =
            \frac{\vartheta'(0) \vartheta (z + u)}{\vartheta (z) \vartheta (u)}=\phi(u,z)\,.
}\eq
 As function of $z$ it has simple pole at $z=0$ and
 \beq\label{a095}
  \begin{array}{l}
  \displaystyle{
\res\limits_{z=0}\phi(z,u)=1\,.
 }
 \end{array}
 \eq
 The local expansions near $z=0$ takes the form:
\beq\label{serphi}
\begin{array}{c} \displaystyle{
    \phi(z, u) = \frac{1}{z} + E_1 (u) + z\,\frac{E^2_1(u) - \wp(u)}{2} + O(z^2)\,,
}\end{array}\eq
where
\beq\label{serE}
\begin{array}{c}
\displaystyle{
    E_1(z) = \partial_z \ln \vartheta(z) =\zeta(z)+ \frac{z}{3} \frac{\vartheta'''(0) }{\vartheta'(0)}=\frac{1}{z} + \frac{z}{3} \frac{\vartheta'''(0) }{\vartheta'(0)} + O(z^3)\,.
}\end{array}\eq
\beq\label{serE2}
\begin{array}{c}
\displaystyle{
    E_2(z)=-E_1'(z) = -\partial^2_z \ln \vartheta(z) =
    \wp(z) - \frac{1}{3}\frac{\vartheta'''(0) }{\vartheta'(0)}=
    \frac{1}{z^2}- \frac{1}{3}\frac{\vartheta'''(0) }{\vartheta'(0)}
    +O(z^2)\,.
}\end{array}\eq
Here $\wp(z)$ and $\zeta(z)$ are the Weierstrass $\wp$- and $\zeta$-functions respectively.

The behaviour on the lattice of periods $\Gamma=\mZ\oplus\mZ\tau$ (of elliptic curve $\mC/\Gamma$) is as follows:
 \beq\label{a0961}
  \begin{array}{l}
  \displaystyle{
 \vth(z+1)=-\vth(z)\,,\qquad \vth(z+\tau)=-e^{-\pi\imath\tau-2\pi\imath z}\vth(z)\,,
 }
 \end{array}
 \eq
 that is
 \beq\label{a096}
  \begin{array}{l}
  \displaystyle{
 \phi(z+1,u)= \phi(z,u)\,,\qquad  \phi(z+\tau,u)= e^{-2\pi\imath u}\phi(z,u)\,.
 }
 \end{array}
 \eq
 The function (\ref{a0962}) satisfies the following addition formula (genus one Fay identity):
%
\beq \begin{array}{c} \label{Fay} \displaystyle{
    \phi(z_1, u_1) \phi(z_2, u_2) = \phi(z_1, u_1 + u_2) \phi(z_2 - z_1, u_2) + \phi(z_2, u_1 + u_2) \phi(z_1 - z_2, u_1)
}\end{array}\eq
and the identity
\beq\begin{array}{c} \displaystyle{
\label{a0964}
    \phi(z, u) \phi(z, -u) = \wp (z) - \wp (u)\,.
}\end{array}\eq
%
Being written in terms of theta function (\ref{a0963}) the identity (\ref{Fay})
turns into one of Riemann identities for theta functions, see e.g. \cite{Mum}.

In the trigonometric limit ${\rm Im}(\tau)\rightarrow +\infty$ we have
$\vth(z)=2\exp(\frac{\pi\imath\tau}{4})\sin(\pi z)+O(\exp(\frac{9\pi\imath\tau}{4}))$. Therefore, the trigonometric and rational limits of the function (\ref{a0962}) are as follows:
 \beq\label{a081}
 \begin{array}{c}
  \displaystyle{
\phi^{\rm trig}(z,u)=\pi\cot(\pi z)+\pi\cot(\pi u)\,,\qquad
\phi^{\rm rat}(z,u)=1/z+1/u\,.
 }
 \end{array}
 \eq

Define also the following set of $M^2$ functions:
 \beq\label{a08}
 \begin{array}{c}
  \displaystyle{
 \vf_a(z,\om_a+\hbar)=\exp(2\pi\imath\frac{a_2z}{M})\,\phi(z,\om_a+\hbar)\,,\quad
 \om_a=\frac{a_1+a_2\tau}{M}\,,
 }
 \end{array}
 \eq
where $M\in\mZ_+$ is an integer number and $a=(a_1, a_2)\in\mZ_M\times\mZ_M$.

\paragraph{Matrix basis.}
For construction of elliptic $R$-matrix the following matrix basis in $\MatM$ is used:
\beq\label{a971}\begin{array}{c} \displaystyle{
        T_\al = \exp \left( \al_1 \al_2 \frac{\pi \imath}{M} \right) Q^{\al_1} \Lambda^{\al_2}, \quad \al = (\al_1, \al_2)\in \mZ_M \times \mZ_M\,,
}\end{array}\eq
where $Q,\Lambda\in\MatM$ with entries
\beq\label{a041}
 \begin{array}{c}
  \displaystyle{
 Q_{kl}=\delta_{kl}\exp(\frac{2\pi
 \imath}{{ M}}k)\,,\ \ \
 \Lambda_{kl}=\delta_{k-l+1=0\,{\hbox{\tiny{mod}}}\,
 { M}}\,,\quad Q^{ M}=\Lambda^{ M}=1_M
 }
 \end{array}
 \eq
 are the finite-dimensional representations of the Heisenberg group:
 \beq\label{a051}
 \begin{array}{c}
  \displaystyle{
 \Lambda^{a_2} Q^{a_1}=\exp\left(\frac{2\pi\imath}{{ M}}\,a_1
 a_2\right)Q^{a_1} \Lambda^{a_2}\,,\ a_{1,2}\in\mZ\,.
 }
 \end{array}
 \eq
 In particular, $T_0=T_{(0,0)}=1_M$ is the identity $M\times M$ matrix.
 For the product of basis matrices we have:
\beq\label{Tcond}\begin{array}{c} \displaystyle{
    T_\al T_\be = \ka_{\al, \be} T_{\al + \be}, \quad \ka_{\al, \be} = \exp \left( \frac{\pi i}{M}(\al_2 \be_1 - \al_1 \be_2) \right)\,,
}\end{array}\eq
where $\al+\be=(\al_1+\be_1,\al_2+\be_2)$. In particular, (\ref{Tcond}) means that $T_{-\al}=T_\al^{-1}$.


\paragraph{The Baxter-Belavin elliptic $R$-matrix \cite{Baxter,Belavin}.}
Using (\ref{a08}) and (\ref{a971}) introduce
\begin{equation}\label{BB}
\begin{array}{c}
    \displaystyle{
    R^{\hbar}_{12} (x) = \frac{1}{M}
    \sum_\al \varphi_\al (x, \frac{\hbar}{M} + \om_\al) T_\al \otimes T_{-\al}}\in\MatM^{\otimes 2}\,.
\end{array}
\end{equation}
This is the elliptic ($\mZ_M$ symmetric) Baxter-Belavin $R$-matrix. The $\mZ_M$ symmetry means
that
\begin{equation}\label{r081}
\begin{array}{c}
    \displaystyle{
    (Q\otimes Q) R^{\hbar}_{12} (x) = R^{\hbar}_{12} (x) (Q\otimes Q) \,,\qquad
    (\Lambda\otimes \Lambda) R^{\hbar}_{12} (x) = R^{\hbar}_{12} (x) (\Lambda\otimes \Lambda) \,.
    }
\end{array}
\end{equation}
This $R$-matrix is often written in different matrix basis and/or using some different
set of functions.  Alternative forms for (\ref{BB}) can be found in \cite{Belavin,Pol}.

By writing $R^{\hbar}_{ij} (x)\in\MatM^{\otimes N}$ (with $1\leq i,j\leq N$) it is assumed that the rest of components (except the $i$-th and the $j$-th) are filled by identity matrices:
\begin{equation}\label{BB2}
\begin{array}{c}
    \displaystyle{
    R^{\hbar}_{ij} (x) =
    }
    \\ \ \\
     \displaystyle{
    =\frac{1}{M}
    \sum_\al \varphi_\al (x, \frac{\hbar}{M} + \om_\al) 1_M\otimes \dots\otimes 1_M\otimes T_\al
     \otimes 1_M\otimes\dots \otimes 1_M\otimes T_{-\al}}\otimes 1_M\otimes \dots\otimes 1_M\,,
\end{array}
\end{equation}
where $T_\al$ is on the $i$-th place and $T_{-\al}$ is on the $j$-th place.
The $R$-matrix (\ref{BB}) is unitary as in (\ref{q03}) and satisfies both Yang-Baxter equations (\ref{QYB}) and (\ref{AYBE}).

Instead of the argument symmetry $\phi(x,z)=\phi(z,x)$, the $R$-matrix (\ref{BB}) satisfies the Fourier symmetry:
\begin{equation}\label{w33}\begin{array}{c}
    R^z_{12}(x) P_{12} = R^x_{12}(z)\,,
\end{array}\end{equation}
where $P_{12}$ is the permutation operator acting as $P_{12}(a\otimes b)=(b\otimes a)$ for any $a,b\in\mC^M$.
Also, for any $A,B\in\MatM$: $P_{12}(A\otimes B)=(B\otimes A)P_{12}$. Explicit form of $P_{12}$ is as follows:
\begin{equation}\label{P12}
\begin{array}{c}
    \displaystyle{
    P_{12}=\sum\limits_{k,l=1}^M e_{kl}\otimes e_{lk}=\frac{1}{M}\sum\limits_{\al\in\,\mZ_M\times\mZ_M}T_\al\otimes T_{-\al}\,,
    }
\end{array}
\end{equation}
where $\{e_{kl}\}$ is the standard matrix basis in $\MatM$. Obviously, $P_{12}^2=1_{M^2}$. The set of permutation operators $P_{ij}\in\MatM^{\otimes N}$ is defined similarly to the lift from (\ref{BB}) to (\ref{BB2}).
 The operators $P_{ij}$ satisfy the properties:
\begin{equation}\label{P12-1}
\begin{array}{c}
    \displaystyle{
    R^\hbar_{ij}(z)=P_{ij}R^\hbar_{ji}(z)P_{ij}\,, \qquad
    R^\hbar_{ij}(z)=P_{kj}R^\hbar_{ik}(z)P_{kj}
    }
\end{array}
\end{equation}
 for any distinct $i,j,k$.

Similarly to the scalar case $R$-matrix (\ref{BB}) is skew-symmetric
 \beq\label{r08}
 \begin{array}{c}
  \displaystyle{
 R^{\hbar}_{12}(z)=-R^{-\hbar}_{21}(-z)\,.
  }
 \end{array}
 \eq
 In both variables $\hbar$ or $z$ the $R$-matrix has simple poles and
 \beq\label{r05}
 \begin{array}{c}
  \displaystyle{
\res\limits_{z=0}R^{\hbar}_{12}(z)=
P_{12}\,,
  }
 \end{array}
 \eq
 \beq\label{r051}
 \begin{array}{c}
  \displaystyle{
\res\limits_{\hbar=0} R^{\hbar}_{12}(z)=1_M\otimes 1_M\,.
}
 \end{array}
 \eq
 The quasi-periodic behaviour on the lattice of periods is as follows:
 \beq\label{r721}
 \begin{array}{c}
  \displaystyle{
 R_{12}^\hbar(z+1)=(Q^{-1}\otimes 1_M)R_{12}^\hbar(z)(Q\otimes 1_M)\,,
  }
 \\ \ \\
  \displaystyle{
 R_{12}^\hbar(z+\tau)=\exp(-2\pi\imath\frac{\hbar}{M})\,(\Lambda^{-1}\otimes 1_M)R_{12}^\hbar(z)(\Lambda\otimes
 1_M)\,.
  }
 \end{array}
 \eq
 In the classical limit $\hbar\rightarrow 0$ we have the expansion:
 \beq\label{r052}
 \begin{array}{c}
  \displaystyle{
 R_{12}^\hbar(z)=\frac{1_M\otimes 1_M}{\hbar} +r_{12}(z)+O(\hbar)\,,
}
 \end{array}
 \eq
 where
 \beq\label{r053}
 \begin{array}{c}
    \displaystyle{
    r_{12} (z) = \frac{1}{M} E_1(z) 1_M \otimes 1_M + \frac{1}{M}
    \sum_{\al \neq 0} \varphi_\al (z, \om_\al) T_\al \otimes T_{-\al}
    }
\end{array}
\eq
 is the classical Belavin-Drinfeld elliptic $r$-matrix \cite{BD} satisfying the classical Yang-Baxter equation:
 \beq\label{r054}
 \begin{array}{c}
    \displaystyle{
    [r_{12},r_{23}]+[r_{12},r_{13}]+[r_{13},r_{23}]=0\,,
    \qquad r_{ij}=r_{ij}(z_i-z_j)\,.
    }
\end{array}
\eq
More properties of the elliptic $R$-matrix can be found in \cite{RT} and \cite{Pol,LOZ15,Z18}. In the Appendix of \cite{ZZ} different forms
of the elliptic $R$-matrix are reviewed.

\paragraph{$R$-matrix in $M=2$ case.} In this case $R^{\hbar}_{12} (z)$ is an element of ${\rm End}(\mC^2\otimes \mC^2)$, i.e. it is $4\times 4$ matrix known as the Baxter's $R$-matrix for eight-vertex model \cite{Baxter}. Namely,
in $M=2$ case the matrices $Q$, $\Lambda$ (\ref{a041}) take the form
 \beq\label{r722}
 \begin{array}{c}
  \displaystyle{
 Q=\mats{-1}{0}{0}{1}\,,\quad \Lambda=\mats{0}{1}{1}{0}
  }
 \end{array}
 \eq
and the basis matrices (\ref{a971}) are $T_{00}=1_2=\sigma_0$, $T_{10}=-\sigma_3$, $T_{10}=\sigma_1$ and
$T_{11}=\sigma_2$, where $\{\sigma_a\}$, $a=0,1,2,3,4$ are the Pauli matrices:
 \beq\label{r723}
 \begin{array}{c}
  \displaystyle{
 \sigma_0=\mats{1}{0}{0}{1}\,,\quad
 \sigma_1=\mats{0}{1}{1}{0}\,,\quad
 \sigma_2=\mats{0}{-\imath}{\imath}{0}\,,\quad
 \sigma_3=\mats{1}{0}{0}{-1}\,.

  }
 \end{array}
 \eq
The $R$-matrix (\ref{BB}) for $M=2$ has the form
 \beq\label{r724}
 \begin{array}{c}
  \displaystyle{
 R_{12}^\hbar(z)
 =\frac{1}{2}\Big(\vf_{00}\,\sigma_0\otimes\sigma_0
 +\vf_{01}\,\sigma_1\otimes\sigma_1
  +\vf_{11}\,\sigma_2\otimes\sigma_2
  +\vf_{10}\,\sigma_3\otimes\sigma_3\Big)\,,
  }
 \end{array}
\eq
 \beq\label{r826}
 \begin{array}{c}
  \displaystyle{
 \vf_{00}=\phi(z,\frac{\hbar}{2})\,,\quad
 \vf_{10}=\phi(z,\frac{1}{2}+\frac{\hbar}{2})\,,\quad
 \vf_{01}=e^{\pi\imath z}\phi(z,\frac{\tau}{2}+\frac{\hbar}{2})\,,\quad
 \vf_{11}=e^{\pi\imath z}\phi(z,\frac{1+\tau}{2}+\frac{\hbar}{2})\,.
 }
  \end{array}
 \eq
 In $4\times 4$ form it is as follows:
 \beq\label{r827}
 \begin{array}{c}
 R_{12}^\hbar(z)=\frac12\left(
 \begin{array}{cccc}
 \vf_{00}+\vf_{10} & 0 & 0 & \vf_{01}-\vf_{11}
 \\
 0 & \vf_{00}-\vf_{10} & \vf_{01}+\vf_{11} & 0
  \\
 0 & \vf_{01}+\vf_{11} & \vf_{00}-\vf_{10} & 0
 \\
 \vf_{01}-\vf_{11} & 0 & 0 & \vf_{00}+\vf_{10}
 \end{array}
 \right)
  \end{array}
 \eq
 This $R$-matrix satisfies (\ref{QYB}), (\ref{QYB2}) and the unitarity in the form (\ref{q03}).
 Divided by $\phi(z,\hbar)$ it turns into ${\bar R}_{12}^\hbar(z)$ (\ref{q04}), which satisfies (\ref{QYB})
 and the unitarity in the form (\ref{q05}), but does not satisfy (\ref{QYB2}).


\paragraph{Higher rank addition formula for $R$-matrices.}
Here we comment on the origin of (\ref{a4}).
The proof of these identities is based on our previous results.
The $N$-th order identity for the Kronecker function is known:
\beq\label{x1}
  \begin{array}{c}
  \displaystyle{
 \prod\limits_{i=1}^N \phi(x_i,y_i)=\sum\limits_{i=1}^N
 \phi \Bigl (x_i,\sum\limits_{l=1}^Ny_l \Bigr )\prod\limits_{j\neq
 i}^N\phi(x_j-x_i,y_j)\,.
 }
 \end{array}
 \eq
In \cite{Z18} the following $R$-matrix analogue of (\ref{x1}) was proved:
  \beq\label{e21}
  \begin{array}{l}
  \displaystyle{
R_{12}^{x_1}(y_1)R_{23}^{x_2}(y_2)\dots R_{N,N+1}^{x_N}(y_N)=
 }
 \\ \ \\
   \displaystyle{
 =R_{1,N+1}^{x_N}(Y_N)R_{12}^{x_1-x_N}(y_1)R_{23}^{x_2-x_N}(y_2)\dots
 R_{N-1,N}^{x_{N-1}-x_N}(y_{N-1})+
 }
  \\ \ \\
   \displaystyle{
 +R_{N,N+1}^{x_N-x_{N-1}}(y_N)R_{1,N+1}^{x_{N-1}}(Y_N)R_{12}^{x_1-x_{N-1}}(y_1)R_{23}^{x_2-x_{N-1}}(y_2)\dots
 R_{N-2,N-1}^{x_{N-2}-x_{N-1}}(y_{N-2})+
 }
 \end{array}
 \eq
$$
  \begin{array}{l}
   \displaystyle{
 +R_{N-1,N}^{x_{N-1}-x_{N-2}}(y_{N-1})R_{N,N+1}^{x_N-x_{N-2}}(y_N)R_{1,N+1}^{x_{N-2}}(Y_N)R_{12}^{x_1-x_{N-2}}(y_1)\dots
 R_{N-3,N-2}^{x_{N-3}-x_{N-2}}(y_{N-3})+
 }
    \\ \ \\
   \displaystyle{
 \ldots
 }
    \\ \ \\
   \displaystyle{
 +R_{23}^{x_2-x_1}(y_2)R_{34}^{x_3-x_1}(y_3)\dots R_{N,N+1}^{x_{N}-x_{1}}(y_{N})R_{1,N+1}^{x_{1}}(Y_N)
 \,,
 }
 \end{array}
 $$
where $Y_N=\sum\limits_{l=1}^N
y_l$. In compact form we have
 \beq\label{x2}
  \begin{array}{c}
  \displaystyle{
 \overrightarrow{\prod\limits_{i=1}^N}
 R_{i,i+1}^{x_i}(y_i)=
 \sum\limits_{i=1}^N
 \overrightarrow{\prod\limits_{j=N-i+2}^{N}}
 R_{j,j+1}^{x_j-x_{N-i+1}}(y_j)
 \cdot R_{1,N+1}^{x_{N-i+1}}\Bigl (\sum\limits_{l=1}^Ny_l \Bigr
 )\cdot
  \overrightarrow{\prod\limits_{j=1}^{N-i}}
 R_{j,j+1}^{x_j-x_{N-i+1}}(y_j)\,.

 }
 \end{array}
 \eq
Multiply both parts from the right by $P_{12}P_{13}...P_{1N}$ and using the Fourier symmetry (\ref{w33})
we come to (\ref{a4}).


\subsection{Appendix B: proof of Lemma {\ref{lemXYZ}}}\label{sec:B}
\def\theequation{B.\arabic{equation}}
\setcounter{equation}{0}
Here we give a sketch of a proof for (\ref{l21}), and (\ref{l22}) is proved similarly.

Notice that (\ref{l21}) is true if any of the subsets ($A$, $B$ or $C$) is empty. Suppose all of them are not empty. Consider the case when all the subsets consist of a single element (i.e. $|A|=|B|=|C|=1$): $A=\{a\}$, $B=\{b\}$ and $C=\{c\}$. If $b>a$ (or $c>b$) then $\mR_{B,A}={\rm Id}$ (or $\mR_{C,B}={\rm Id}$) and  (\ref{l21}) is again trivial. If $c<b<a$ then (\ref{l21}) is the quantum Yang-Baxter equation (\ref{QYB}).

In the general case the proof becomes a cumbersome due to a wide range of possible configurations (orderings) of indices on the interval $1,..,N$. For simplicity we consider the configurations satisfying
\beq\label{x10}
  \begin{array}{c}
  \displaystyle{
 1\leq c_1<...<c_{|C|}<b_1<...<b_{|B|}<a_1<...<a_{|A|}\leq N\,.
 }
 \end{array}
 \eq
The proof for a generic configuration of indices differs from the above just by multiple usage of (\ref{QYB3}).

Beginning with $|A|=|B|=|C|=1$ we are going to increase  the number of indices. Consider first an arbitrary $|C|$, while keeping   $|A|=|B|=1$. Then from (\ref{RIJ2}) we have
\beq\label{x11}
  \begin{array}{c}
  \displaystyle{
 \mR_{C,A\cup B}=R_{c_{|C|},b}R_{c_{|C|},a}\cdot R_{c_{|C|-1},b}R_{c_{|C|-1},a}\dots
 R_{c_{1},b}R_{c_{1},a}
 }
 \end{array}
 \eq
 and (since $\mR_{B,A}=R_{b,a}$)
\beq\label{x12}
  \begin{array}{c}
  \displaystyle{
 \mR_{C,A\cup B}\mR_{B,A}=R_{c_{|C|},b}R_{c_{|C|},a}\dots
 R_{c_{1},b}R_{c_{1},a}\cdot R_{b,a}=
 }
 \\ \ \\
   \displaystyle{
 =
 R_{c_{|C|},b}R_{c_{|C|},a}\dots
 R_{c_{2},b}R_{c_{2},a}\cdot R_{b,a}\cdot R_{c_{1},a}R_{c_{1},b}\,,
 }
 \end{array}
 \eq
 where we applied the Yang-Baxter equation (\ref{QYB}) as $R_{c_{1},b}R_{c_{1},a} R_{b,a}=R_{b,a} R_{c_{1},a}R_{c_{1},b}$.
 By repeating the same $|C|-1$ times more,  one easily obtains the r.h.s. of (\ref{l21}).

 Next, let us enlarge the number of indices in $B$ keeping $|A|=1$ (and $|C|$ is arbitrary). We prove
 (\ref{l21}) in this case by induction. Suppose it is true for some fixed $|B|$ with
\beq\label{x13}
  \begin{array}{c}
  \displaystyle{
 1\leq c_1<...<c_{|C|}<b_1<...<b_{|B|}<a\leq N\,.
 }
 \end{array}
 \eq
 Let us prove (\ref{l21}) for $B\rightarrow B\cup \{b'\}$ with $b_{|B|}<b'<a$. We begin with the r.h.s. of  (\ref{l21}).
 Since $\mR_{C\cup B\cup \{b'\},A}=\mR_{\{b'\},A}\mR_{C\cup B,A}$ and $\mR_{C,B\cup \{b'\}}=\mR_{C,B}\mR_{C,\{b'\}}$ we have
\beq\label{x14}
  \begin{array}{c}
  \displaystyle{
 \mR_{C\cup B\cup \{b'\},A}\mR_{C,B\cup \{b'\}}=\mR_{\{b'\},A}\mR_{C\cup B,A}\mR_{C,B}\mR_{C,\{b'\}}=
 \mR_{\{b'\},A} \mR_{C,A\cup B}\mR_{B,A} \mR_{C,\{b'\}}\,,
 }
 \end{array}
 \eq
 where we used the induction assumption. Since $A=\{a\}$ we also have $\mR_{C,A\cup B}=\mR_{C,B}\mR_{C,A}$. Then continue transforming (\ref{x14}) as follows:
\beq\label{x15}
  \begin{array}{c}
  \displaystyle{
 \mR_{\{b'\},A} \mR_{C,B}\mR_{C,A}\mR_{B,A} \mR_{C,\{b'\}}=
  \mR_{C,B} \mR_{C\cup\{b'\},A} \mR_{C,\{b'\}}\mR_{B,A}=
 }
 \\ \ \\
   \displaystyle{
  =\mR_{C,B} \mR_{C,A\cup\{b'\}} \mR_{\{b'\},A} \mR_{B,A}=
  \mR_{C,A\cup B\cup\{b'\}} \mR_{B\cup \{b'\},A}\,,
 }
 \end{array}
 \eq
 where in the beginning of the second line we again applied the induction assumption in the form $\mR_{C\cup\{b'\},A} \mR_{C,\{b'\}}=\mR_{C,A\cup\{b'\}} \mR_{\{b'\},A}$.

 Finally. let us enlarge the number of indices in $A$. Again, we do it by induction. Suppose
 (\ref{l21}) is true for some $A,B,C$ with indices as in (\ref{x10}). Let us prove it for $A\rightarrow A\cup\{a'\}$
 with $a_{|A|}<a'\leq N$. Here we begin with the l.h.s. of (\ref{l21}). Since $\mR_{C,A\cup\{a'\}\cup B}=\mR_{C,A\cup B}\mR_{C,\{a'\}}$ and $\mR_{B,A\cup\{a'\}}=\mR_{B,A}\mR_{B,\{a'\}}$ we have
\beq\label{x16}
  \begin{array}{c}
  \displaystyle{
 \mR_{C,A\cup\{a'\}\cup B}\mR_{B,A\cup\{a'\}}=\mR_{C,A\cup B}\mR_{C,\{a'\}}\mR_{B,A}\mR_{B,\{a'\}}=
 \mR_{C,A\cup B}\mR_{B,A}\mR_{C,\{a'\}}\mR_{B,\{a'\}}=
 }
 \\ \ \\
   \displaystyle{
   =\mR_{B\cup C,A}\mR_{C,B\cup\{a'\}} \mR_{B,\{a'\}}
   =\mR_{B\cup C,A}\mR_{B\cup C,\{a'\}}\mR_{C,B}=\mR_{B\cup C,A\cup\{a'\}}\mR_{C,B}\,.
 }
 \end{array}
 \eq
 This finishes the proof.


\subsection{Appendix C: proof of Lemma \ref{lprop1}}\label{sec:C}
\def\theequation{C.\arabic{equation}}
\setcounter{equation}{0}
Consider the expression (\ref{a21}) for ${\mathcal F}^+_{i_1,...,i_k}$. Our purpose is to calculate its
 residue at $z_a=z_b+\eta$ and transform it to the form (\ref{r752}). Let $I=\{i_1,...,i_k\}$ be the subset of $\{1,\dots,N\}$ and $I^c=\{1,\dots,N\}\setminus I$ is complement of a set $I$. Then ${\mathcal F}^+_{i_1,...,i_k}$ can be written in the notations (\ref{RIJsh1}) and (\ref{RIJsh2}) in the following way:
 \beq\label{q801}
{\mathcal F}^+_{i_1,...,i_k}=\mathcal{R}_{I,I^c}\cdot \mathcal{R}'_{I^c_-,I}\cdot \mathcal{R}_{I_-^c,I}\cdot \mathcal{R}'_{I,I^c}.
\eq
 If $a,b\in I$ or $a,b\in I^c$ the expression (\ref{q801}) does not contain a pole at $z_a=z_b+\eta$. We assume $b=i_d\in I$, $a\in I^c$ and $a<b$. The simple pole
 at $z_a=z_b+\eta$ comes from only $R_{ab}^-$ presenting in the $R_{I_-^c,I}$ (or equivalently in the middle line in the r.h.s. of (\ref{a21})).

 Denote by $J$ and $J^\bullet$ the subsets of $\{1,2,\dots,N\}$:
\beq\label{q803a}
J=I\setminus\{b\}\ ,\qquad \qquad J^\bullet=I^c\setminus\{a\}.
\eq
Let use rewrite $ {\mathcal F}_{i_1,...,i_{d-1},i_{d+1},...,l_k}^+(\{z_1,...,z_N\}\setminus\{z_a,z_b\}, k-1,N-2)$ in r.h.s of (\ref{r752}) in notations (\ref{RIJsh1}), (\ref{RIJsh2}):
\beq\label{q815}
{\mathcal F}_{i_1,...,i_{d-1},i_{d+1},...,i_k}^+(\{z_1,...,z_N\}\setminus\{z_a,z_b\}, k-1,N-2)=\mathcal{R}_{J,J^\bullet}\cdot\mathcal{R}'_{J^\bullet_-,J}\cdot \mathcal{R}_{J^\bullet_-,J}\cdot\mathcal{R}'_{J,J^c}.
\eq
Indeed, due to (\ref{q803a}) we have $J=\{i_1,...,i_{d-1},i_{d+1},...,i_k\}$ and $J^\bullet=\{1,\dots,N\}\setminus J\setminus \{a,b\}$. So $J^\bullet$ is a complement set of $J$ to set with $(N-2)$ elements.
Our aim is to prove the formula (\ref{r752}) given in these notations as
\beq\label{q816}
 \begin{array}{c}
  \displaystyle{
  \res\limits_{z_a=z_b+\eta}{\mathcal F}_{i_1,...,i_k}
  ={\mathcal A}(a,b)\cdot\mathcal{R}_{J,J^\bullet}\cdot\mathcal{R}'_{J^\bullet_-,J}\cdot \mathcal{R}_{J^\bullet_-,J}\cdot\mathcal{R}'_{J,J^c}
  \cdot P_{ab} \cdot{\mathcal B}(a,b)\,.
  }
 \end{array}
 \eq

 The calculation of the residue $\res\limits_{z_a=z_b+\eta}{\mathcal F}^+_{i_1,...,i_k}$ is simple. Due to
 (\ref{r05}) one should just replace $R_{ab}^-$ with $P_{ab}$ and restrict the obtained expression to $z_a=z_b+\eta$.

 The main part of the proof is to transform the residue to the form (\ref{q816}). The idea is as follows. First, using lemma (\ref{lemXYZ}) we separate the $R$-matrices in (\ref{q801})  containing tensor index $a$ or $b$ and the others.
 Then we move the permutation operator $P_{ab}$ to the right using the properties (\ref{P12-1}).

Firstly, we rewrite (\ref{q801}) using the notation (\ref{pI}):
 \beq\label{q803}
{\mathcal F}^+_{i_1,...,i_k}=\mathcal{R}_{I,I^c}\cdot\mathbf{p}_{I^c}\cdot \mathcal{R}'_{I^c,I}\cdot \mathcal{R}_{I^c,I}\cdot\mathbf{p}_{I^c}^{-1}\cdot\mathcal{R}'_{I,I^c}.
\eq
In order to separate $\mathcal{R}_{J,J^\bullet}$ and the $R$-matrices with tensor index $a$ or $b$ we use Lemma \ref{lemXYZ}:
$$
\begin{array}{c}
\mathcal{R}_{I,I^c}=\mathcal{R}_{J\cup\{b\},J^\bullet\cup\{a\}}=\mathcal{R}_{J\cup\{b\},J^\bullet\cup\{a\}}\cdot\mathcal{R}_{\{b\},J}\cdot\left(\mathcal{R}_{\{b\},J}\right)^{-1}
\\ \ \\
=\mathcal{R}_{\{b\},J\cup J^\bullet\cup\{a\}}\cdot\mathcal{R}_{J,J^\bullet\cup\{a\}}\cdot\mathcal{R}_{J^\bullet,\{a\}}\cdot\left(\mathcal{R}_{J^\bullet,\{a\}}\right)^{-1}\left(\mathcal{R}_{\{b\},J}\right)^{-1}=
\end{array}
$$
\beq\label{q804}
=\mathcal{R}_{\{b\},J\cup J^\bullet\cup\{a\}}\cdot \mathcal{R}_{J\cup J^\bullet,\{a\}}\cdot \mathcal{R}_{J,J^\bullet}\cdot \left(\mathcal{R}_{J^\bullet,\{a\}}\right)^{-1}\cdot \left(\mathcal{R}_{\{b\},J}\right)^{-1}
\eq
In the second equality we insert identity operator ${\rm Id}=\mathcal{R}_{\{b\},J}\cdot\left(\mathcal{R}_{\{b\},J}\right)^{-1}$ to the right and in the next equality use (\ref{l21}) for the first two items, then we insert ${\rm Id}=\mathcal{R}_{J^\bullet,\{a\}}\cdot\left(\mathcal{R}_{J^\bullet,\{a\}}\right)^{-1}$ before $\left(\mathcal{R}_{\{b\},J}\right)^{-1}$, in the last equality we use (\ref{l21}) for the second and third items. Similar transformations can be done with other items of (\ref{q803}):

\beq\label{q805}
\mathcal{R}_{I^c,I}=\mathcal{R}_{\{a\},J\cup J^\bullet\cup\{b\}}\cdot \mathcal{R}_{J\cup J^\bullet,\{b\}}\cdot \mathcal{R}_{J^\bullet,J}\cdot \left(\mathcal{R}_{J,\{b\}}\right)^{-1}\cdot \left(\mathcal{R}_{\{a\},J^\bullet}\right)^{-1}\,,
\eq
\beq\label{q806}
\mathcal{R}'_{I,I^c}=\left(\mathcal{R}'_{J^\bullet,\{a\}}\right)^{-1}\cdot \left(\mathcal{R}'_{\{b\},J}\right)^{-1}\cdot \mathcal{R}'_{J,J^\bullet}\cdot \mathcal{R}'_{\{b\},J\cup J^\bullet}\cdot \mathcal{R}'_{J\cup J^\bullet\cup\{b\},\{a\}}\,,
\eq
\beq\label{q807}
\mathcal{R}'_{I^c,I}= \left(\mathcal{R}'_{J,\{b\}}\right)^{-1}\cdot \left(\mathcal{R}'_{\{a\},J^\bullet}\right)^{-1}\cdot \mathcal{R}'_{J^\bullet,J}\cdot \mathcal{R}'_{\{a\},J\cup J^\bullet}\cdot \mathcal{R}'_{J\cup J^\bullet\cup\{a\},\{b\}}\,.
\eq

Let us substitute expressions (\ref{q804}),  (\ref{q805}),  (\ref{q806}),  (\ref{q807}) in (\ref{q803}):
\beq\label{q808}
\begin{array}{c}
 {\mathcal F}^+_{i_1,...,i_k}=
 \\ \ \\
 =\mathcal{R}_{\{b\},J\cup J^\bullet\cup\{a\}}\cdot \mathcal{R}_{J\cup J^\bullet,\{a\}}\cdot \mathcal{R}_{J,J^\bullet}\cdot \left(\mathcal{R}_{J^\bullet,\{a\}}\right)^{-1}\cdot \left(\mathcal{R}_{\{b\},J}\right)^{-1} \mathbf{p}_{I^c}\left(\mathcal{R}'_{J,\{b\}}\right)^{-1}\cdot \left(\mathcal{R}'_{\{a\},J^\bullet}\right)^{-1}
 \\ \ \\
 \mathcal{R}'_{J^\bullet,J}\cdot \mathcal{R}'_{\{a\},J\cup J^\bullet}\cdot \mathcal{R}'_{J\cup J^\bullet\cup\{a\},\{b\}}\cdot\mathcal{R}_{\{a\},J\cup J^\bullet\cup\{b\}}\cdot \mathcal{R}_{J\cup J^\bullet,\{b\}}\cdot \mathcal{R}_{J^\bullet,J}\cdot
 \\ \ \\
 \left(\mathcal{R}_{J,\{b\}}\right)^{-1}\cdot \left(\mathcal{R}_{\{a\},J^\bullet}\right)^{-1}\mathbf{p}_{I^c}^{-1}\left(\mathcal{R}'_{J^\bullet,\{a\}}\right)^{-1}\cdot \left(\mathcal{R}'_{\{b\},J}\right)^{-1}\cdot \mathcal{R}'_{J,J^\bullet}\cdot \mathcal{R}'_{\{b\},J\cup J^\bullet}\cdot \mathcal{R}'_{J\cup J^\bullet\cup\{b\},\{a\}}\,.
\end{array}
\eq
The first two items and the last two items of (\ref{q808}) are $\mathcal{A}(a,b)$ and $\mathcal{B}(a,b)$ from (\ref{r753}), (\ref{r754}), respectively. Indeed, due to definitions (\ref{RIJsh1}) and (\ref{RIJsh2})::

\beq\label{q809}
\mathcal{A}(a,b)=\mathcal{R}_{\{b\},J\cup J^\bullet\cup\{a\}}\cdot \mathcal{R}_{J\cup J^\bullet,\{a\}}= R_{b,b+1}...R_{b,N}\cdot R_{a-1,a}...R_{1,a}\, ,
\eq
\beq\label{q810}
\mathcal{B}(a,b)=\mathcal{R}'_{\{b\},J\cup J^\bullet}\cdot \mathcal{R}'_{J\cup J^\bullet\cup\{b\},\{a\}}=R_{b,1}\dots R_{b,a-1}\cdot R_{b,a+1}\dots R_{b-1,b}\cdot R_{N,a}\dots R_{a+1,a}\,.
\eq
Note that $\mathbf{p}_{I^c}$ commute with $\left(\mathcal{R}'_{J,\{b\}}\right)^{-1}$ and $\left(\mathcal{R}'_{\{a\},J^\bullet}\right)^{-1}$. Indeed, $\mathbf{p}_{I^c}$ shifts only $z_a$ and $z_{j^\bullet}$,  $j^\bullet\in J^\bullet$, on which  the first item does not depend and the second depends on difference $(z_a-z_{j^\bullet})$. We move  $\left(\mathcal{R}'_{J,\{b\}}\right)^{-1}\cdot\left(\mathcal{R}'_{\{a\},J^\bullet}\right)^{-1}$ to the left and put it before $\mathbf{p}_{I^c}$. Then we use (\ref{un01}) to reduce the $R$-matrices:
\beq
\left(\mathcal{R}_{\{b\},J}\right)^{-1}\left(\mathcal{R}'_{J,\{b\}}\right)^{-1}=\left(\prod\limits_{\substack{j\in J\\j>b}} \phi(z_j-z_b)\phi(z_b-z_j)\right)^{-1}
\eq
The same should be done for $\left(\mathcal{R}_{J,\{b\}}\right)^{-1}\cdot \left(\mathcal{R}_{\{a\},J^\bullet}\right)^{-1}\mathbf{p}_{I^c}^{-1}\left(\mathcal{R}'_{J^\bullet,\{a\}}\right)^{-1}\cdot \left(\mathcal{R}'_{\{b\},J}\right)^{-1}$ in the last line of (\ref{q808}).
The expression (\ref{q808}) becomes
\beq\label{q811}
\begin{array}{c}
 {\mathcal F}^+_{i_1,...,i_k}=\mathcal{A}(a,b) \mathcal{R}_{J,J^\bullet}\left( \prod\limits_{\substack{j^\bullet \in J^\bullet\\j^\bullet <a}} \phi(z_{j^\bullet}-z_a)\phi(z_a-z_{j^\bullet})\prod\limits_{\substack{j\in J\\j>b}} \phi(z_j-z_b)\phi(z_b-z_j)\right)^{-1}
 \\ \ \\
  \mathbf{p}_{I^c}\left(\mathcal{R}'_{J^\bullet,J}\cdot \mathcal{R}'_{\{a\},J\cup J^\bullet}\cdot \mathcal{R}'_{J\cup J^\bullet\cup\{a\},\{b\}}\cdot\mathcal{R}_{\{a\},J\cup J^\bullet\cup\{b\}}\cdot \mathcal{R}_{J\cup J^\bullet,\{b\}}\cdot \mathcal{R}_{J^\bullet,J}\right)\mathbf{p}_{I^c}^{-1}\cdot
 \\ \ \\
 \left( \prod\limits_{\substack{j^\bullet \in J^\bullet\\j^\bullet >a}} \phi(z_{j^\bullet}-z_a)\phi(z_a-z_{j^\bullet})\prod\limits_{\substack{j\in J\\j<b}} \phi(z_j-z_b)\phi(z_b-z_j)\right)^{-1} \mathcal{R}'_{J,J^\bullet}\mathcal{B}(a,b)\,.
\end{array}
\eq

Let us write down two middle items in the middle line of (\ref{q811}) and transform it using Lemma \ref{lemXYZ}:
\beq\label{q812}
\begin{array}{c}
\mathcal{R}'_{J\cup J^\bullet\cup\{a\},\{b\}}\cdot\mathcal{R}_{\{a\},J\cup J^\bullet\cup\{b\}}=
 \\ \ \\
=\left(\mathcal{R}'_{J\cup J^\bullet,\{a\}}\right)^{-1}\cdot \mathcal{R}'_{J\cup J^\bullet,\{a\}} \cdot\mathcal{R}'_{J\cup J^\bullet\cup\{a\},\{b\}}\cdot\mathcal{R}_{\{a\},J\cup J^\bullet\cup\{b\}}\cdot\mathcal{R}_{\{b\},J\cup J^\bullet}\cdot\left( \mathcal{R}_{\{b\},J\cup J^\bullet}\right)^{-1}=
 \\ \ \\
 =\left(\mathcal{R}'_{J\cup J^\bullet,\{a\}}\right)^{-1}\cdot \mathcal{R}'_{\{a\},\{b\}} \cdot\mathcal{R}'_{J\cup J^\bullet,\{a,b\}}\cdot\mathcal{R}_{\{a,b\},J\cup J^\bullet}\cdot\mathcal{R}_{\{a\},\{b\}}\cdot\left( \mathcal{R}_{\{b\},J\cup J^\bullet}\right)^{-1}=
 \\ \ \\
 \displaystyle{
=\prod\limits_{m=b+1}^N \phi(z_m-z_b)\phi(z_b-z_m)\prod\limits_{\substack{l=a+1\\l\neq b}}^N \phi(z_l-z_a)\phi(z_a-z_l)\times}
\\
\times\left(\mathcal{R}'_{J\cup J^\bullet,\{a\}}\right)^{-1}\cdot R^\hbar_{ab}(z_a-z_b)\cdot\left( \mathcal{R}_{\{b\},J\cup J^\bullet}\right)^{-1}.
\end{array}
\eq
In first equality we insert identical operators  ${\rm Id}=\left(\mathcal{R}'_{J\cup J^\bullet,\{a\}}\right)^{-1}\cdot \mathcal{R}'_{J\cup J^\bullet,\{a\}}$ to the left and ${\rm Id}=\mathcal{R}_{\{b\},J\cup J^\bullet}\cdot\left( \mathcal{R}_{\{b\},J\cup J^\bullet}\right)^{-1}$ to the right. Then using  (\ref{l21}) and (\ref{l22}) we separate $\mathcal{R}'_{\{a\},\{b\}}=1$ and $\mathcal{R}_{\{a\},\{b\}}=R^\hbar_{ab}(z_a-z_b)$. In the last equality due to (\ref{un02}) the item  $\mathcal{R}'_{J\cup J^\bullet,\{a,b\}}\cdot\mathcal{R}_{\{a,b\},J\cup J^\bullet}$ is reduced.

Substitute the result of (\ref{q812}) into (\ref{q811}). Then, by unifying the products with  Kronecker functions in  (\ref{q811}) and moving them at the beginning one obtains:
\beq\label{q813}
\begin{array}{c}
 {\mathcal F}^+_{i_1,...,i_k}=\left( \prod\limits_{\substack{j^\bullet \in J^\bullet}} \phi(z_{j^\bullet}-z_a)\phi(z_a-z_{j^\bullet})\prod\limits_{\substack{j\in J}} \phi(z_j-z_b)\phi(z_b-z_j)\right)^{-1} \mathcal{A}(a,b) \mathcal{R}_{J,J^\bullet}\times
 \\ \ \\
 \times \mathbf{p}_{I^c}\left(\prod\limits_{m=b+1}^N \phi(z_m-z_b)\phi(z_b-z_m)\prod\limits_{\substack{l=a+1\\l\neq b}}^N \phi(z_l-z_a)\phi(z_a-z_l)\,\mathcal{R}'_{J^\bullet,J}\cdot \mathcal{R}'_{\{a\},J\cup J^\bullet}\times\right.
  \\ \ \\
  \left.\times\left(\mathcal{R}'_{J\cup J^\bullet,\{a\}}\right)^{-1}\cdot R^\hbar_{ab}(z_a-z_b)\cdot\left( \mathcal{R}_{\{b\},J\cup J^\bullet}\right)^{-1}\cdot \mathcal{R}_{J\cup J^\bullet,\{b\}}\cdot\mathcal{R}_{J^\bullet,J}\right)\mathbf{p}_{I^c}^{-1}\cdot \mathcal{R}'_{J,J^\bullet}\mathcal{B}(a,b)\,.
\end{array}
\eq

Next step is to find $\res\limits_{z_a=z_b+\eta}{\mathcal F}^+_{i_1,...,i_k}$ and transform it to (\ref{q816}). Taking the residue of (\ref{q813}):
\beq\label{q820}
\begin{array}{c}
 \res\limits_{z_a=z_b+\eta}{\mathcal F}^+_{i_1,...,i_k}=\left( \prod\limits_{\substack{j^\bullet \in J^\bullet}} \phi(z_{j^\bullet}-z_a)\phi(z_a-z_{j^\bullet})\prod\limits_{\substack{j\in J}} \phi(z_j-z_b)\phi(z_b-z_j)\right)^{-1} \mathcal{A}(a,b) \mathcal{R}_{J,J^\bullet}\times
 \\ \ \\
\times \mathbf{p}_{I^c}\left(\prod\limits_{m=b+1}^N \phi(z_m-z_b)\phi(z_b-z_m)\prod\limits_{\substack{l=a+1\\l\neq b}}^N \phi(z_l-z_a)\phi(z_a-z_l)\,\mathcal{R}'_{J^\bullet,J}\times\right.
  \\ \ \\
  \left.\times \mathcal{R}'_{\{a\},J\cup J^\bullet}\cdot\left(\mathcal{R}'_{J\cup J^\bullet,\{a\}}\right)^{-1}\cdot P_{ab}\cdot\left( \mathcal{R}_{\{b\},J\cup J^\bullet}\right)^{-1}\cdot\mathcal{R}_{J\cup J^\bullet,\{b\}}\cdot \mathcal{R}_{J^\bullet,J}\right)\mathbf{p}_{I^c}^{-1}\cdot \mathcal{R}'_{J,J^\bullet}\mathcal{B}(a,b)
\end{array}
\eq
we need to move $P_{ab}$ to the right. The permutation $P_{ab}$ commutes with  $\mathcal{R}_{J^\bullet,J}$ since $J$ and $J^\bullet$ do not contain indices $a,b$. Moving  $P_{ab}$ to the right we change each index $b$ to $a$, in details:
\beq\label{q821}
P_{ab}\left(\mathcal{R}_{\{b\},J\cup J^\bullet}\right)^{-1}=P_{ab} R_{b,N}^{-1}\cdot R_{b,N-1}^{-1}\dots R_{b,b+1}^{-1}= R_{a,N}^{-1}\cdot R_{a,N-1}^{-1}\dots R_{a,b+1}^{-1}P_{ab}\,,
\eq
\beq\label{q822}
P_{ab}\mathcal{R}_{J\cup J^\bullet,\{b\}}=P_{ab} R_{b-1,b}\dots R_{a+1,b}\cdot R_{a-1,b}\dots R_{1,b}= R_{b-1,a}\dots R_{a+1,a}\cdot R_{a-1,a}\dots R_{1,a}P_{ab}\,.
\eq
Notice that (\ref{q821})  is written in terms of short notations $R_{ij}=R_{ij}^\hbar(z_i-z_j)$ , so that any $R$-matrix with ''tensor indices'' $ij$ depends on $z_i-z_j$. When moving $P_{ab}$ to the right $R$-matrix indices change and the latter rule fails. But it can be restored by substituting either $z_b\rightarrow z_a$. Indeed, we take the residue at $z_a=z_b+\eta$ and the item $P_{ab}\left(\mathcal{R}_{\{b\},J\cup J^\bullet}\right)^{-1}$ is placed between $\mathbf{p}_{I^c}$ and $\mathbf{p}_{I^c}^{-1}$ in (\ref{q820}), so the substitution $z_b\rightarrow z_a$ is equivalent to taking the residue.

Let us write in details the two items before $P_{ab}$ in (\ref{q820}):
\beq\label{q823}
\mathcal{R}'_{\{a\},J\cup J^\bullet}=R_{a,1}\dots R_{a,a-1}\,,
\eq
\beq\label{q824}
\left(\mathcal{R}'_{J\cup J^\bullet,\{a\}}\right)^{-1}=R_{a+1,a}^{-1}\dots R_{b-1,a}^{-1}\cdot R_{b+1,a}^{-1}\dots R_{N,a}^{-1}\,.
\eq
Multiplying expressions (\ref{q821}), (\ref{q822}), (\ref{q823}), (\ref{q824}) in order given in (\ref{q820}) one should reduce the $R$-matrices:
\beq\label{q8241}
\begin{array}{c}
  \mathcal{R}'_{\{a\},J\cup J^\bullet}\cdot\left(\mathcal{R}'_{J\cup J^\bullet,\{a\}}\right)^{-1}\cdot P_{ab}\cdot\left( \mathcal{R}_{\{b\},J\cup J^\bullet}\right)^{-1}\cdot\mathcal{R}_{J\cup J^\bullet,\{b\}}=
  \\ \ \\
  =R_{a,1}\dots R_{a,a-1}\cdot R_{a+1,a}^{-1}\dots R_{b-1,a}^{-1}\cdot
  \hspace{9cm}\
  \\ \ \\
  \quad\ \quad\  \cdot\underbrace{R_{b+1,a}^{-1}\dots R_{N,a}^{-1}\cdot R_{a,N}^{-1}\dots R_{a,b+1}^{-1}}\cdot R_{b-1,a}\dots R_{a+1,a}\cdot R_{a-1,a}\dots R_{1,a}=
  \\ \ \\
  =\left(\prod_{m=b+1}^N \phi(z_m-z_a)\phi(z_a-z_m)\right)^{-1} R_{a,1}\dots R_{a,a-1}\cdot
    \hspace{6cm}\
    \\ \ \\
  \qquad\ \qquad\  \qquad\ \qquad\ \cdot  \underbrace{ R_{a+1,a}^{-1}\dots R_{b-1,a}^{-1}\cdot R_{b-1,a}\dots R_{a+1,a}}\cdot R_{a-1,a}\dots R_{1,a}=
   \end{array}
 \eq
\beq\label{q825}
\begin{array}{c}
  {\displaystyle
  =\left(\prod_{m=b+1}^N \phi(z_m-z_a)\phi(z_a-z_m)\right)^{-1}\, R_{a,1}\dots R_{a,a-1}\cdot R_{a-1,a}\dots R_{1,a}
  }=
   \\ \ \\
  {\displaystyle
  =\left(\prod_{m=b+1}^N \phi(z_m-z_a)\phi(z_a-z_m)\right)^{-1}\,\prod_{l=1}^{a-1}\phi(z_l-z_a)\phi(z_a-z_l)\,.
  }
  \end{array}
  \eq
  Here we use the unitarity property (\ref{q03}) for highlighted $R$-matrices in the second and the last equality. Taking into account (\ref{q825}) we collect all items with Kronecker functions in (\ref{q820}) and show that all these items vanish:
  \beq
  \begin{array}{c}
\left( \prod\limits_{\substack{j^\bullet \in J^\bullet}} \phi(z_{j^\bullet}-z_a)\phi(z_a-z_{j^\bullet})\prod\limits_{\substack{j\in J}} \phi(z_j-z_b)\phi(z_b-z_j)\right)^{-1} \bigg|_{z_b=z_a-\eta} \times
\\ \ \\
{\displaystyle
 \times \mathbf{p}_{I^c}\ \
 \frac{\prod\limits_{m=b+1}^N \phi(z_m-z_b)\phi(z_b-z_m)\prod\limits_{\substack{l=1\\l\neq b}}^N \phi(z_l-z_a)\phi(z_a-z_l)}{\displaystyle \prod_{m=b+1}^N \phi(z_m-z_a)\phi(z_a-z_m)}\bigg|_{z_b=z_a}\ \
 \mathbf{p}_{I^c}^{-1} =
 }
 \end{array}
 \eq
  \beq
  \begin{array}{c}
 {\displaystyle
 =\left( \prod\limits_{\substack{j^\bullet \in J^\bullet}} \phi(z_{j^\bullet}-z_a)\phi(z_a-z_{j^\bullet})\prod\limits_{\substack{j\in J}} \phi(z_j-z_a+\eta)\phi(z_a-\eta-z_j)\right)^{-1} \times}
 \\ \ \\
 {\displaystyle
 \times \mathbf{p}_{I^c}\ \ \prod\limits_{\substack{l=1\\l\neq b}}^N \phi(z_l-z_a)\phi(z_a-z_l)\ \  \mathbf{p}_{I^c}^{-1}=1\,.
 }
\end{array}
  \eq
  Indeed,  if $l\in J^{\bullet}$, $\mathbf{p}_{I^c}$ shifts both variables $z_a$ and $z_l$, thus the difference $(z_a-z_l)$ does not change, if $l\in J$, then  $(z_a-z_l)\to (z_a-z_l-\eta)$.

  We have shown that ({\ref{q820}}) equals (\ref{q816}), this finishes the proof of (\ref{r752}).  The proof of (\ref{r755}) is performed in a similar way. $\blacksquare$


\subsection{Appendix D: proof of Proposition {\ref{prop2}}}\label{sec:D}
\def\theequation{D.\arabic{equation}}
\setcounter{equation}{0}

Let us first compute the residue $\res\limits_{z_a=z_b+\eta-\Omega_{m_1,m_2}}R_{ab}^-$. For this purpose we need the transformation properties (\ref{r721}). It is easy to show that
 \beq\label{a901}
  \begin{array}{c}
     \displaystyle{
   R_{12}^\hbar(z-\Omega_{m_1,m_2})=\exp(2\pi\imath\frac{m_2\hbar}{M})\,(T_{m_1,m_2}\otimes 1_M)R_{12}^\hbar(z)(T_{m_1,m_2}^{-1}\otimes
 1_M)\,,
         }
      \end{array}
 \eq
    where $T_{m_1,m_2}$ is the basis matrix (\ref{a971}). Then
 \beq\label{a902}
  \begin{array}{c}
     \displaystyle{
   R_{ij}^\hbar(z-\Omega_{m_1,m_2})=\exp(2\pi\imath\frac{m_2\hbar}{M})\,T^{(i)}_{m_1,m_2}\, R_{ij}^\hbar(z)\,\Big(T^{(i)}_{m_1,m_2}\Big)^{-1}\,,
         }
      \end{array}
 \eq
    where
 \beq\label{a903}
  \begin{array}{c}
     \displaystyle{
   T^{(i)}_{m_1,m_2}=1_M\otimes\ldots\otimes 1_M\otimes T_{m_1,m_2}\otimes 1_M\otimes\ldots \otimes 1_M\in\MatM^{\otimes N}
         }
      \end{array}
 \eq
    with $T_{m_1,m_2}$ standing in the $i$-th tensor component. Thus, from (\ref{a902}) and (\ref{r05}) we conclude
 \beq\label{a9030}
  \begin{array}{c}
     \displaystyle{
    \res\limits_{z_a=z_b+\eta-\Omega_{m_1,m_2}}R_{ab}^-=\exp(2\pi\imath\frac{m_2\hbar}{M})\,T^{(a)}_{m_1,m_2}\, P_{ab}\,\Big(T^{(a)}_{m_1,m_2}\Big)^{-1}\,.
           }
      \end{array}
 \eq

    Next, let us represent the transformation properties with respect to a variable $z_a$ in a more universal form. It follows from the $\mZ_M$ symmetry (\ref{r081}) that
 \beq\label{a9031}
  \begin{array}{c}
     \displaystyle{
   R_{ij}^\hbar(z)T^{(i)}_{m_1,m_2}T^{(j)}_{m_1,m_2}=
   T^{(i)}_{m_1,m_2}T^{(j)}_{m_1,m_2}R_{ij}^\hbar(z)\,.
   }
      \end{array}
 \eq
    Therefore, we have
 \beq\label{a904}
  \begin{array}{c}
     \displaystyle{
   R_{ij}^\hbar(z+\Omega_{m_1,m_2})=\exp(-2\pi\imath\frac{m_2\hbar}{M})\,\Big(T^{(i)}_{m_1,m_2}\Big)^{-1}\, R_{ij}^\hbar(z)\,T^{(i)}_{m_1,m_2}=
   }
   \\ \ \\
     \displaystyle{
   =\exp(-2\pi\imath\frac{m_2\hbar}{M})\,T^{(j)}_{m_1,m_2}\, R_{ij}^\hbar(z)\,\Big(T^{(j)}_{m_1,m_2}\Big)^{-1}\,.
         }
      \end{array}
 \eq
By combining (\ref{a902}) and (\ref{a904}) we come to the following property. For any $R_{ij}=R^\hbar_{ij}(z_i-z_j)$ (or $R_{ij}^-=R^\hbar_{ij}(z_i-z_j-\eta)$)
 \beq\label{a905}
  \begin{array}{c}
     \displaystyle{
   R_{ij}\Big|_{z_a\rightarrow z_a-\Omega_{m_1,m_2}}=\exp\Big(2\pi\imath(\delta_{a,i}-\delta_{a,j})\frac{m_2\hbar}{M}\Big)\,
   T^{(a)}_{m_1,m_2}\, R_{ij}\,\Big(T^{(a)}_{m_1,m_2}\Big)^{-1}\,.
   }
      \end{array}
 \eq
Indeed, if both indices $i,j$ do not equal to $a$ then the r.h.s. of (\ref{a905}) is equal to $R_{ij}$.
If one of the indices (either $i$ or $j$) is equal to $a$ then in the r.h.s. of (\ref{a905}) we obtain either
(\ref{a902}) or (\ref{a904}).

Returning back to the proof of the Proposition \ref{prop2} we mention that
 \beq\label{a906}
  \begin{array}{c}
     \displaystyle{
   \res\limits_{z_a=z_b+\eta-\Omega_{m_1,m_2}} \mF=
   \res\limits_{z_a=z_b+\eta} \Big( \mF|_{z_a\rightarrow z_a-\Omega_{m_1,m_2}} \Big)
   }
      \end{array}
 \eq
From (\ref{a905}) we have
 \beq\label{a907}
  \begin{array}{c}
     \displaystyle{
    \mF|_{z_a\rightarrow z_a-\Omega_{m_1,m_2}}=T^{(a)}_{m_1,m_2}\,  \mF\,\Big(T^{(a)}_{m_1,m_2}\Big)^{-1}\,,
   }
      \end{array}
 \eq
    where the exponential factor is absent since in each term of the expression $\mF$ the number of $R$-matrices $R_{ij}$ (or $R^-_{ij}$) with $i=a$
is equal to the number of $R$-matrices $R_{ij}$ (or $R^-_{ij}$) with $j=a$. Plugging (\ref{a907}) into (\ref{a906}) yields
 \beq\label{a908}
  \begin{array}{c}
     \displaystyle{
   \res\limits_{z_a=z_b+\eta-\Omega_{m_1,m_2}} \mF=
   T^{(a)}_{m_1,m_2}\,\Big(\res\limits_{z_a=z_b+\eta} \mF \Big)\,\Big(T^{(a)}_{m_1,m_2}\Big)^{-1}=0\,.
   }
      \end{array}
 \eq
    The latter follows from the Proposition \ref{prop1}. $\blacksquare$



\subsection*{Acknowledgments}


We are grateful to J. Lamers, A. Liashyk, I. Sechin, V. Sokolov and A. Zabrodin for useful discussions.


This work was performed at the Steklov International Mathematical Center and supported by the Ministry of Science and Higher Education of the Russian Federation (agreement no. 075-15-2022-265).


\begin{small}

\end{small}

\end{document}